\theoremstyle{plain}
\numberwithin{equation}{section}
\newtheorem{lemma}{Lemma}[section]
\newtheorem{proposition}[lemma]{Proposition}
\newtheorem{theorem}[lemma]{Theorem}
\newtheorem{cor}[lemma]{Corollary}
\newtheorem{remark}[lemma]{Remark}
\newtheorem{definition}[lemma]{Definition}
\newtheorem{example}[lemma]{Example}
\newtheorem{assumption}[lemma]{Assumption}
\newtheorem{notation}[lemma]{Notation}
\newcommand{\mitt}{\left | { \atop }  \right.}
\newcommand{\equa}{\begin{eqnarray*}}
\newcommand{\tion}{\end{eqnarray*}}
\newcommand{\Om}{\Omega}
\newcommand{\om}{\omega}
\newcommand{\vare}{\varepsilon}
\newcommand{\vph}{\varphi}
\newcommand{\half}{{\frac{1}{2}}}
\newcommand{\ftn}{{\mathcal{F}}}
\newcommand{\gtn}{{\mathcal{G}}}
\newcommand{\E}{{\mathbb{E}}}
\newcommand{\R}{{\mathbb{R}}}
\newcommand{\N}{{\mathbb{N}}}
\newcommand{\Q}{{\mathbb{Q}}}
\renewcommand{\P}{\mathbb{P}}
\newcommand{\cF}{{\mathcal{F}}}
\newcommand{\cL}{{\mathcal{L}}}
\newcommand{\cG}{{\mathcal{G}}}
\newcommand{\cH}{{\mathcal{H}}}
\newcommand{\cP}{{\mathcal{P}}}
\newcommand{\cN}{{\mathcal{N}}}
\newcommand{\sptext}[3]{\hspace{#1 em}\mbox{#2}\hspace{#3 em}}
\newcommand{\C}{\mathcal{C}}
\newcommand{\rh}{\mathcal{RH}}
\newcommand{\la}{\lambda}
\newcommand{\bmo}{{\rm BMO}}
\newcommand{\M}{\mathbb{X}}
\newcommand{\A}{\mathbb{A}}
\newcommand{\weight}{w}
\newcommand{\sli}{\rm sl}
\newcommand{\BMO}{{\rm BMO}}
\newcommand{\thteta}{\alpha}
\newcommand{\st}{}
\begin{document}

\begin{frontmatter}
	
	\title{Weighted Bounded Mean Oscillation applied to Backward Stochastic Differential Equations}
	\author[ALL]{Stefan Geiss}
        \ead{stefan.geiss@jyu.fi}

        \author[ALL,JY]{Juha Ylinen}
	
	\address{Department of Mathematics and Statistics,
		University of Jyv\"askyl\"a,
		P.O. Box 35,
                FIN-40014 University of Jyv\"askyl\"a,
		Finland.}
	
\fntext[ALL]{This work was supported by project "Stochastic and Harmonic Analysis, interactions, and applications" of the Academy of Finland [project number 133914].}
\fntext[JY]{{\st The author was supported by the} Vilho, Yrjö and Kalle Väisälä foundation of the Finnish Academy of Science and Letters.}

\begin{abstract}
\noindent We deduce conditional $L_p$-estimates for the variation of a solution of a BSDE. Both quadratic and sub-quadratic types of BSDEs are considered,
and using the theory of weighted bounded mean oscillation we deduce new tail estimates for the solution $(Y,Z)$ on subintervals of $[0,T]$.
Some new results for the decoupling technique introduced in \cite{jossain} are obtained as well and {\st some applications 
of the tail estimates are given.}
\end{abstract}

\begin{keyword}
	BSDEs \sep Weighted Bounded Mean Oscillation \sep John-Nirenberg Theorem \sep Tail Estimates \sep Decoupling
\end{keyword}

\end{frontmatter}

\tableofcontents

%%%%%%%%%%%%%%%%%%%%%%%%%%%%%%%%%%%%%%%%%%%%%%%%%%%%%%%%%%%%%%%%%%%%%%%%%%%%%%%%%%%%%%%%%%%%%%%%%%%%%%%%%%%%%%%%%%%%%%%%%%

\section{Introduction}

In this article we study backward stochastic differential equations (BSDEs from now on) of type
\begin{equation}\label{equation:BSDE0}
Y_t = \xi + \int_t^T f(s,Y_s,Z_s)ds - \int_t^T Z_s dW_s, \qquad t \in [0,T],
\end{equation}
where $T>0$ is a fixed number and $(W_t)_{t \in [0,T]}$ is a $d$-dimensional Brownian motion.
Roughly speaking, a BSDE is a map $(\xi,f)\mapsto (Y,Z)$, so that $(\xi,f)$ is the data, and $(Y,Z)$ is the solution.
Here the \emph{terminal value} $\xi \in L_2$ is a given random variable that is measurable with respect to the $\sigma$-algebra generated by the Brownian motion.
In the present article, the \emph{generator} 
$f:[0,T]\times\Om\times\R\times\R^d \to \R$ is assumed to be such that
\begin{itemize}
	\item[(1)] $(t,\om) \mapsto f(t,\om,y,z)$ is predictable for all $(y,z) \in \R \times \R^{d}$, and
	\item[(2)] there are $L_y,L_z\ge 0$ and a $\theta\in [0,1]$
	such that for all $(t,\om,y_0,y_1,z_0,z_1)$ one has
	\begin{equation*}
	|f(t,\om,y_0,z_0)-f(t,\om,y_1,z_1)|
	\le L_y |y_0-y_1| + L_z [1+|z_0|+|z_1|]^\theta |z_0-z_1|.
	\end{equation*}
\end{itemize}
This means that the generator $f$ can be random, is assumed to be uniformly Lipschitz in the $y$-variable, and locally Lipschitz in the $z$-variable.
We will consider the uniformly Lipschitz case ($\theta=0$), the quadratic case ($\theta=1$), and the sub-quadratic case ($\theta\in (0,1)$) at the same time.
We say that $(Y,Z)$ is a solution of BSDE \eqref{equation:BSDE0} if $Y$ is a continuous adapted process with $\E\sup_{t \in [0,T]} |Y_t|^2 < \infty$,
if $Z$ is a predictable process with $\E \int_0^T |Z_r|^2 dr < \infty$, and if \eqref{equation:BSDE0} is satisfied almost surely.
\parindent=20pt

BSDEs were first introduced by Bismut in \cite{Bismut}, and the amount of research increased significantly after Pardoux and Peng showed in \cite{PP1} that a BSDE with square-integrable terminal value $\xi$ and a uniformly Lipschitz generator $f$ has a unique solution. Concerning the Lipschitz-case, see also for example \cite{PP2}, \cite{ElKaroui:97}, and \cite{Briand:02}. More recently, the theory of BSDEs with a generator that grows quadratically in the $z$-variable has been developed,
see for example \cite{Kobylanski:00}, \cite{HIM}, \cite{Morlais}, \cite{Briand:12}, \cite{DHR}, and the references therein.
The original motivation of studying BSDEs comes from stochastic optimal control theory. {\st In general,} BSDEs have applications in stochastic differential games,
stochastic finance in connection to option pricing
and utility maximization, and they are closely connected to partial differential equations (PDEs).
\pagebreak

{\st
The present article is a continuation of \cite{jossain}  and an application of \cite{Geiss:wBMO},
where \cite{jossain} itself is a continuation  of \cite{GGG:12}. The main results of this article are
\smallskip

\begin{itemize}
\setlength\itemsep{-0.1em}
\item[--]
{\sc Theorem \ref{theorem:main}}, that provides conditional variational estimates for the processes $(Y,Z)$,
      i.e. we bound the {\em mean oscillation} of the processes $(Y,Z)$ from above
      by natural {\em weights} derived from the initial data $(\xi,f)$  of the BSDE, 
\item[--]
 {\sc Theorem \ref{theorem:tail-estimate}}, that deduces from Theorem \ref{theorem:main} 
      {\em conditional tail estimates of John-Nireneberg type},
\item[--]
{\sc Theorem \ref{theorem:main_FBSDE}}, that is the version of Theorem \ref{theorem:main} for decoupled FBSDEs,
\item[--]
 {\sc Theorems \ref{theorem:FBSDE_tail_Y} and \ref{theorem:FBSDE_tail_Z}}, that are versions of Theorem \ref{theorem:tail-estimate}
      for decoupled FBSDEs.
\end{itemize}

Our strategy to prove the basic 
Theorems \ref{theorem:main} and \ref{theorem:tail-estimate} consists in the following steps:

\begin{itemize}\setlength\itemsep{-0.1em}
\item[--] {\sc Step} 1: We prove a conditional decoupling inequality for BSDEs in Proposition \ref{proposition:I_1}.
\item[--] {\sc Step} 2: We deduce conditional variational inequalities for $(Y,Z)$ in Theorem \ref{theorem:main}.
\item[--] {\sc Step} 3: We deduce conditional tail estimates for $(Y,Z)$ in Theorem \ref{theorem:tail-estimate}.
\end{itemize}

In Steps 1 and 2 we extend and apply methods from \cite{jossain},
in Step 3 we use a result from \cite{Geiss:wBMO}.
To explain the role of \cite{Geiss:wBMO} and \cite{jossain} for the present article let us assume a stochastic basis
$ (\Om,\ftn,\P,(\ftn_r)_{r \in [0,T]}) $ as in Section \ref{section:preliminaries}.
\medskip

\underline{Relation to \cite{Geiss:wBMO}:}
In \cite{Geiss:wBMO} a class of weighted BMO spaces $\bmo_p^\Phi$ has been introduced.
For a positive c\`adl\`ag and adapted weight process $\Phi=(\Phi_t)_{t\in [0,T]}$ and
$p\in (0,\infty)$ we say that a continuous and adapted process  $A=(A_t)_{t\in [0,T]}$ with $A_0\equiv 0$  belongs to $\BMO_p^\Phi$ provided that
\begin{equation}\label{eqn:definition_BMO_p_Phi}
    \| A \|_{\BMO_p^\Phi} 
   := \sup \left \{
   \left \| \E \left ( \left | \frac{A_T - A_{\tau}}{\Phi_\tau} \right |^p \mitt \cF_\tau \right ) 
            \right \|_{L_\infty(\P)}^\frac{1}{p} \mitt
        \tau: \Omega \to [0,T] \mbox{ stopping time} 
            \right \}  < \infty.
\end{equation}
In the present article it is essential to use the concept of $\bmo_p^\Phi$ {\em locally in time}. To explain this let us 
look at Theorem \ref{theorem:main} where we have weight processes $(\weight_{p,s,u,t})_{u\in [s,t]}$ and 
$(\weight_{p,s,u,t}^{\xi,f})_{u\in [s,t]}$ for fixed $0\le s < t \le T$.  If we consider 
the special case $[s,t]=[0,T]$ and set
$\Phi_u := w_{p,0,u,T}\vee \vare$ 
and $\Phi'_u := w_{p,0,u,T}^{\xi,f}\vee \vare$ 
for any $\vare>0$ (the parameter $\vare>0$ is only formal, to get the weights strictly positive
to be in accordance with \cite{Geiss:wBMO}), then part of 
Theorem \ref{theorem:main} reads as
\begin{eqnarray}
        \| (Y_t-Y_0)_{t\in [0,T]}\|_{\BMO_p^{\Phi}} 
&\le& c_{\eqref{theorem:main}},\label{eqn:global_BMO_esimate_Y} \\
        \left \| \left (\int_0^t|Z_s|^2ds\right)_{t\in [0,T]}\right \|_{\BMO_{p/2}^{(\Phi')^2}} 
&\le& d^2_{\eqref{theorem:main}}. \label{eqn:global_BMO_esimate_Z} 
\end{eqnarray}
However, this "global" setting, i.e. $[s,t]=[0,T]$, would not give us estimates on the distribution 
of $Y_t-Y_s$ that take the size of $t-s$ into the account. Therefore 
Theorem \ref{theorem:main} provides local versions of \eqref{eqn:global_BMO_esimate_Y} and \eqref{eqn:global_BMO_esimate_Z}
in the following sense:
for an arbitrary sub-interval $[s,t]\subseteq [0,T]$ we show that, 
for any stopping time $\tau:\Om \to [s,t]$, 
\begin{eqnarray}
          \left (\E^{\ftn_\tau} |Y_t-Y_\tau|^p \right )^\frac{1}{p}
&\le & c_{\eqref{theorem:main}} \weight_{p,s,\tau,t},\\
 	      \left ( \E^{\cF_\tau}
 	       \left(\int_{\tau}^{t} |Z_r|^2 dr\right)^{\frac{p}{2}} \right )^\frac{1}{p}
&\le& d_{\eqref{theorem:main}} \weight_{p,s,\tau,t}^{\xi,f}.
\end{eqnarray}
The "main" weight process $(\weight_{p,s,u,t}^{\xi,f})_{u\in [s,t]}$ is obtained in
Assumption \ref{assumption:BMO}.
Because our approach requires a localization in $[s,t]$ the spaces $\bmo_p^\Phi$ could not be used in the form they 
have been defined in \cite{Geiss:wBMO} and we extracted the results from \cite{Geiss:wBMO} in a form needed  in Section \ref{sec:JN}.
This made it possible to perform Step 2, i.e. to deduce Theorem \ref{theorem:tail-estimate} from Theorem \ref{theorem:main}.

The Assumption \ref{assumption:BMO} measures the sensitivity of the initial data $(\xi,f)$ of our BSDE with respect 
to a class of conditional expectations in a natural way that might be interpreted as a property
related to directional Malliavin derivatives. But to prove Theorem \ref{theorem:main} we have to translate 
Assumption \ref{assumption:BMO} into the decoupling context and obtain the equivalent condition 
Assumption \ref{assumption:BMO_product}.
So Assumption \ref{assumption:BMO} and Theorem \ref{theorem:main} combine 
 \cite{jossain}  and \cite{Geiss:wBMO}: the weights originate from the decoupling techniques in \cite{jossain} and are
used in a context that localizes the BMO spaces from \cite{Geiss:wBMO}.
It should be mentioned that  Assumption \ref{assumption:BMO} might be seen also from the point of view
that we start with the initial data $(\xi,f)$ of the BSDE and {\it then} look for good or even best possible weight processes
$(\weight^\xi_{p,s,u,t})_{u\in [s,t]}$ and $(\weight^f_{p,s,u,t})_{u\in [s,t]}$.

Let us explain the importance of the localized approach, i.e. to consider subintervals $[s,t]\subseteq [0,T]$, 
by the example of decoupled Forward Backward SDEs (FBSDEs) treated in Section \ref{subsec:decoupled_FBSDEs}. There we consider
\begin{eqnarray*}
X_t &=& x + \int_0^t b(r,X_r)dr + \int_0^t \sigma(r,X_r)dW_r, \\
Y_t &=& g(X_T) + \int_t^T h(r,X_r,Y_r,Z_r)dr - \int_t^T Z_r dW_r, \label{eqn:JN-}
\end{eqnarray*}
for $t\in [0,T]$, where $x \in \R^d$ is fixed and the main assumption is that the functions $b,\sigma,g,h$ are uniformly 
Lipschitz in the state variables (see Assumption \ref{assumption:FBSDE} below).
A consequence of Theorem \ref{theorem:FBSDE_tail_Y} is, that there exists an absolute constant $c_0>0$
and constants $c,C>0$, depending on the parameters of the FBSDE, such that for any $0 \le s < t \le T$ we have
\begin{equation}\label{eqn:intro_Y-FBSDE_JN} 
       \P\left( \sup_{u \in [s,t]} \frac{|Y_u-Y_\tau|}{\sqrt{t-s}} > c \mu \nu \mitt \cF_s \right )
  \le  e^{1-\mu} + c_0 \P \left ( \sup_{u \in [s,t]} |X_u|^2 > \nu^2-1 \mitt \cF_s \right)
\end{equation}
for all $\mu,\nu > 0$. In the case that $\sigma$ is bounded, this improves to
\begin{equation} \label{eqn:intro_Y-FBSDE_b_JN} 
   \P\left( \sup_{u \in [s,t]} \frac{|Y_u-Y_\tau|}{\sqrt{t-s}} > c \mu \nu \mitt \cF_s \right )
  \le  e^{1-\mu} + c_0 \P \left ( \sup_{u \in [s,t]} |X_u|^2(t-u) > \nu^2-1 \mitt \cF_s \right). 
\end{equation}
Similar results are obtained for the process $Z$ of the FBSDE
in Theorem \ref{theorem:FBSDE_tail_Z}, and for the 
solution $(Y,Z)$ to the general non-Markovian BSDE in Theorem \ref{theorem:tail-estimate}.
The idea behind the inequalities  \eqref{eqn:intro_Y-FBSDE_JN} and \eqref{eqn:intro_Y-FBSDE_b_JN} is to 
minimize for a given $\la>0$ the right hand sides over all decompositions $\la=\mu\nu$. This is used
in Sections \ref{sec:spline_approximation} and \ref{sec:direct_simulation}.
Even though  \eqref{eqn:intro_Y-FBSDE_JN} and \eqref{eqn:intro_Y-FBSDE_b_JN} concern 
 a well-studied family of FBSDEs, the tail estimates we obtain in 
 \eqref{eqn:intro_Y-FBSDE_JN} and \eqref{eqn:intro_Y-FBSDE_b_JN}
(Theorems \ref{theorem:FBSDE_tail_Y} and \ref{theorem:FBSDE_tail_Z}) seem to be new.
Coming back to moment estimates there is another application
that shows the strength of the conditional approach. 
Let $s\in [0,T)$ and $n_s\ge 1$ such that $s+\frac{1}{n} \le T$ for $n\ge n_s$. Then Fatou's Lemma, 
the right-hand side continuity of the filtration, and Theorem \ref{theorem:main_FBSDE} for $p=2$ give 
\equa
      \liminf_{n\to \infty, n\ge n_s} \left ( n \int_s^{s+\frac{1}{n}} |Z_r|^2 dr \right )
& = & \E \left ( \liminf_{n\to \infty, n\ge n_s} n \int_s^{s+\frac{1}{n}} |Z_r|^2 dr \mitt \cF_s \right ) \\
&\le& \liminf_{n\to \infty, n\ge n_s} \E \left ( n \int_s^{s+\frac{1}{n}} |Z_r|^2 dr \mitt \cF_s \right ) \\
&\le& \begin{cases}
      C_{\eqref{theorem:main_FBSDE}}^2 [1+|X_s|^2] & \mbox{ under condition } (A_{b,\sigma}) \\
      D_{\eqref{theorem:main_FBSDE}}^2             & \mbox{ under conditions } (A_{b,\sigma}) \mbox{ and } (A_\sigma).
      \end{cases}
\tion

\pagebreak

\underline{Relation to \cite{jossain}:}
To prove our basic Theorem \ref{theorem:main} we use the decoupling technique from \cite{jossain}. This technique has to be extended into two directions:
\begin{itemize}\setlength\itemsep{-0.1em}
\item[--] 
      Similarly as the concept of the expected value is extended to the conditional expectation, some results from \cite{jossain} has to be extended 
      to a conditional context, see Section \ref{sec:conditional_results} and Proposition \ref{proposition:I_1}.
\item[--] 
      The above mentioned Assumption \ref{assumption:BMO} we need to translate into Assumption \ref{assumption:BMO_product} to apply the decoupling 
      technique. This translation is based on Proposition \ref{proposition:ugliness}. 
      Having in mind that every separable Banach space can be isometrically embedded into $C([0,1])$ by the Banach-Mazur Theorem and 
      that $\M=[0,1]$ is locally $\sigma$-compact, then Proposition \ref{proposition:ugliness} is also a statement about
      the conditional decoupling of random variables with values in separable Banach spaces. Therefore, Proposition \ref{proposition:ugliness} is
an infinite-dimensional version of Lemma \ref{lemma:conditional_equivalence}, where Lemma \ref{lemma:conditional_equivalence} is a conditional version of \cite[Lemma 4.20]{jossain}.

\end{itemize}

\medskip

The article is organized as follows. The main results are formulated in Section \ref{section:BSDE}. We also include proofs in Section \ref{section:BSDE}
as long as the decoupling technique from \cite{jossain} is not required. This technique is introduced in Section \ref{section:coupling}.
In Section \ref{sec:remaining_proofs_based_on_decoupling} we complete the proofs of the results in Section  \ref{section:BSDE}
with the methods from Section \ref{section:coupling}. Some applications of the estimates we obtained are illustrated in Section 
\ref{section:applications}. The Appendices A,B, and C contain some technical tools that were needed throughout the article.
}

%%%%%%%%%%%%%%%%%%%%%%%%%%%%%%%%%%%%%%%%%%%%%%%%%%%%%%%%%%%%%%%%%%%%%%%%%%%%%%%%%%%%%%%%%%%%%%%%%%%%%%%%%%%%%%%%%%%%%%%%%%

\parindent=0pt
\section{Preliminaries}\label{section:preliminaries}

A constant with a subindex of the form \eqref{theorem:main_FBSDE} is a constant from the result that is numbered \ref{theorem:main_FBSDE}. 
For example, $c_{\eqref{theorem:main_FBSDE}},d_{\eqref{theorem:main_FBSDE}},C_{\eqref{theorem:main_FBSDE}}$ and $D_{\eqref{theorem:main_FBSDE}}$ 
are constants from Theorem \ref{theorem:main_FBSDE}.
We fix a finite number $T>0$ and work on the stochastic basis
$$ (\Om,\ftn,\P,(\ftn_r)_{r \in [0,T]}) $$
satisfying {\it {\st the} usual assumptions}. In particular, {\st $(\Omega,\cF,\P)$ is complete and in our case}
$\mathbb{F}:=(\ftn_r)_{r \in [0,T]}$ is the augmented filtration of a $d$-dimensional Brownian motion $W$,
$\ftn=\ftn_T$, and we assume that all paths of $W$ are continuous.
If we give a statement or a definition that involves a filtration, but the filtration is not mentioned explicitly, then $\mathbb{F}$ is used.
Moreover, the following notation will be used:
\begin{notation}\rm\hfill
\label{notation}
 \begin{enumerate}[(1)]
  \item
The Lebesgue-measure on $[0,T]$ is denoted by $\lambda$, and
	\begin{eqnarray*}
	 (\Om_0,\Sigma_0,\P_0) &:=& \left(\Om,\cF,\P\right), \\
	 (\Om_T,\Sigma_T,\P_T) &:=& \left([0,T]\times \Om,\mathcal{B}([0,T])\otimes\cF,\frac{\lambda}{T}\otimes\P\right).
	\end{eqnarray*}
  \item
Given a $\sigma$-algebra $\cG \subseteq \cF$ and $X \in L_1(\Om,\cF,\P)$, the conditional expectation of $X$ given $\cG$ is denoted by $\E^{\cG} X := \E \left[ X \mitt \cG \right]$.
  \item
For any $B \in \ftn$ of positive measure and any $A \in \ftn$ we let
	\begin{equation*}
	 \P_B(A):= \frac{\P(B\cap A)}{\P(B)}.
	\end{equation*}
  \item
For $0 \le s < t \le T$ we let
	\begin{equation}\label{eqn:sigma_algebras}
	 \cG_s^t := \sigma(W_r,r\le s) \vee \sigma(W_r-W_t, t < r \le T) \sptext{1}{and}{1}
	 \cH_s^t := \mathcal{B}([0,T]) \otimes \cG_s^t.
	\end{equation}
  \item
The (predictable) $\sigma$-algebra on $\Om_T$ generated by $(\cF_t)_{t \in [0,T]}$-adapted left-continuous processes is denoted by $\mathcal{P}$.
 \end{enumerate}
\end{notation}

In general, inequalities concerning random variables, for example $\E^{\cG} X \le c Y$, where $c>0$ is a constant, hold only almost surely. If it is obvious what measure is used,
we will just write $\E^{\cG} X \le c Y$.
{\st If $A$ is a subset of a metric space, then} we denote the interior of $A$ by ${\mathring A}$ and the closure of $A$ by $\overline{A}$.

\begin{definition}\label{definition:localcompact}
A complete metric space $\M\not = \emptyset$ is \emph{locally $\sigma$-compact}, if
there exist compact subsets \linebreak
$\emptyset \not = K_1 \subseteq K_2 \subseteq \dots $, such that $\overline{\mathring K}_n=K_n$ and
$\M=\bigcup_{n=1}^\infty \mathring{K}_n$.
\end{definition}
\begin{proposition}
A locally $\sigma$-compact $\M$ is separable. Moreover, if $(K_n)_{\st n \ge 1}$ are compact subsets as 
in Definition \ref{definition:localcompact}, and $\A \subseteq \M$ is a dense countable subset, then for 
any {\st $n \ge 1$} the set $\A_n := \A \cap K_n$ is dense in $K_n$.
\end{proposition}

\begin{definition}\label{definition:generator_space}
For $S \in \{0,T\}$ we use
 \begin{equation*}
  L_0(\Om_S,\Sigma_S,\P_S;C(\M))
 \end{equation*}
to denote the equivalence-classes\footnote{We identify $f$ and $g$ if $f(\eta,\cdot)=g(\eta,\cdot)$ for $\P_S$-a.e. $\eta \in \Om_S$.}
of $f:\Om_S\times \M \to \R$ that satisfy:
\begin{itemize}
 \item[(1)] $\eta \mapsto f(\eta,y)$ is $\Sigma_S$-measurable for all $y \in \M$,
 \item[(2)] $y \mapsto f(\eta,y)$ is continuous for all $\eta \in \Om_S$.
\end{itemize}
\end{definition}

We will need the Burkholder-Davis-Gundy-inequalities:

\begin{proposition}
[{\cite[p.160]{Revuz:Yor}, \cite[Proposition 4.2]{Barlow:Yor:82}}]
\label{proposition:BDG}
Let {\st $p \in(0,\infty)$}. Then there exists $\alpha_p,\beta_p>0$ such that for all (continuous) martingales $(M_t)_{t\in [0,T]}$ 
{\st with $M_0\equiv0$} we have:
 \begin{equation*}
      \alpha_p \left\|\langle M \rangle_t^\half\right\|_{L_p}
  \le \left\|\sup_{s \in [0,t]} |M_s| \right\|_{L_p}
  \le \beta_p \left\|\langle M \rangle_t^\half\right\|_{L_p}
 \end{equation*}
for all $t \in [0,T]$, {\st where $(\langle M \rangle_t)_{t \in [0,T]}$ is the quadratic variation process of $M$.
For $p\in [2,\infty)$ the constant $\beta_p>0$ can be chosen such that $\beta_p \le c \sqrt{p}$ for some some absolute $c>0$.}
\end{proposition}

Next we introduce  {\st the} {\em sliceable numbers.}

\begin{definition}[{\st cf. \cite[Definition 5.2]{jossain}}]\label{definition:BMO_S2}
Assume that $(c_r)_{r \in [0,T]}$ is predictable, $d$-dimensional and such that
 \begin{equation*}
   \| c \|_{\bmo(S_2)}
:= \sup_{t\in [0,T]} \left \| \E \left ( \int_t^T |c_s|^{2} ds | {\cF}_t \right ) \right \|_{L_\infty}^\half < \infty.
 \end{equation*}
Then we say $c \in \bmo(S_2)$. This is quantified using, for any $N \ge 1$,
$\sli_N(c) := \inf \vare$,
where the infimum is taken over all $\vare>0$ such that there are stopping times $0=\tau_0\le \tau_1 \le \cdots \le \tau_N = T$
with
\begin{equation*}  \sup_{k=1,...,N} \| \chi_{(\tau_{k-1},\tau_k]} c \|_{\bmo(S_2)} \le \varepsilon. \end{equation*}
Moreover, we let $\sli_\infty(c) := \lim_{N\to\infty} \sli_N(c)$.
\end{definition}
\smallskip
For our main application of sliceable numbers we introduce the function
 \begin{equation}\label{function:Phi}
  \Phi:(1,\infty) \to (0,\infty), \quad \Phi(q) = \left(1+\frac{1}{q^2}\log\left(1+\frac{1}{2q-2}\right)\right)^{1/2} -1,
 \end{equation}
so that $\Phi$ is continuous and decreasing, with $\lim_{q\to \infty} \Phi(q) = 0$ and 
$\lim_{q\to 1     } \Phi(q) = \infty$. Furthermore, we let
 \equa
  \Psi &:& \Big \{(\gamma,q)\in [0,\infty)\times (1,\infty): 0\le \gamma < \Phi(q)<\infty  \Big \}\to [0,\infty), \\
  \Psi(\gamma,q) &:= & \left ( \frac{2}{1-\frac{2q-2}{2q-1} e^{q^2 [\gamma^2 + 2\gamma]}}\right )^\frac{1}{q}.
 \tion
{\st The concept of sliceable numbers is motivated by Proposition \ref{proposition:scliceable_rh} below. To formulate 
this statement we need the following definition:}

\begin{definition}
\label{definition:A_p}
Let $M=(M_t)_{t\in [0,T]}$ be a martingale with $M_0\equiv 0$ such that
 $ \mathcal{E}(M) = (\mathcal{E}(M)_t)_{t \in [0,T]} := (e^{M_t-\frac{1}{2}\langle M \rangle_t})_{t \in [0,T]}$
is a martingale as well, and let $q\in (1,\infty)$. If
  \begin{equation*}
    \rh_q(\mathcal{E}(M))^q := \sup_{\tau} \left\| \E^{\cF_\tau} \left| \frac{\mathcal{E}(M)_T}{\mathcal{E}(M)_\tau}\right|^q \right\|_\infty < \infty,
  \end{equation*}
where the supremum is taken over all stopping times {\st $\tau:\Omega\to [0,T]$},
we say\footnote{$\rh$ stands for Reverse H\"older.} $\mathcal{E}(M) \in \rh_q$.
\end{definition}

\begin{proposition}[{\st \cite[Theorem 5.25]{jossain}\label{proposition:scliceable_rh}}]
Assume that $c \in \bmo(S_2)$ is $d$-dimensional, and that for some {\st $N \ge 1$} it holds {\st that} $\sli_{N}(c) < \Phi(q)$.
Then, putting $(M_t)_{t \in [0,T]} := (\int_0^t c_r dW_r)_{t \in [0,T]}$, we have
 \begin{equation*}
  \rh_q (\mathcal{E}(M)) \le \big [\Psi(\sli_{N}(c),q)\big ]^N.
 \end{equation*}
In particular, if $M$ is {\em sliceable}, i.e.
$\sli_{\infty}(c) = 0$,
then for all $q>1$ there exists an {\st $N \ge 1$} such that \linebreak
$\sli_N(c) < \Phi(q)$, so that
$\mathcal{E}(M) \in \bigcap_{q \in (1,\infty)} \rh_q$.
\end{proposition}
\medskip

We end with an extension of Fefferman's inequality, which was proven in 
{\st \cite[Corollary 5.19]{jossain}}
(see also \cite[Lemma 1.6]{Delbaen-Tang} and \cite[Theorem 1.1(iii)]{Banuelos-Bennett}). Note that here both $X$ and $Y$ may be multidimensional.

\begin{proposition}
\label{proposition:Fefferman}
Assume that $X \in \bmo(S_2)$ and that $Y= (Y_r)_{r \in [0,T]}$ is predictable and such that
 \begin{equation*}
  \|Y\|_{H_p(S_2)}^p:=\E\left(\int_0^T |Y_r|^2 dr \right)^{p/2} < \infty
 \end{equation*}
for some $p\in [1,\infty)$. Then
$\left\| \int_0^T |X_r| |Y_r| d_r \right\|_{L_p} \le \sqrt{2p} \|Y\|_{H_p(S_2)} \|X\|_{\bmo(S_2)}$.
\end{proposition}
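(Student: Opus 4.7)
The statement is a Fefferman-type inequality with Lebesgue measure $dr$ replacing the bracket of a martingale, so the plan is to reduce it to the classical $L_p$-Fefferman inequality for continuous $\mathbb{H}$-martingales (cf.\ \cite{Kazamaki}) by manufacturing two stochastic integrals whose sharp bracket recovers $\int_0^T |X_r||Y_r|\,dr$.

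Concretely, I will let $B^1$ denote the first coordinate of $B$ and define the scalar continuous $\mathbb{H}$-martingales
\[
 M_t := \int_0^t |X_r|\,dB_r^1, \qquad N_t := \int_0^t |Y_r|\,dB_r^1, \qquad t \in [0,T].
\]
Evaluating Definition \ref{definition:BMO_S2} at the stopping time $\tau = 0$ gives $\E\int_0^T|X_r|^2\,dr \le \|X\|_{\bmo(S_2,\mathbb{H})}^2 < \infty$, so $M$ is a genuine $H_2$-martingale. A direct computation identifies $\langle M\rangle_r = \int_0^r |X_s|^2\,ds$ and $\langle N\rangle_r = \int_0^r |Y_s|^2\,ds$; the $\bmo$-norm from Definition \ref{def:bmo} (transferred to $\mathbb{H}$ in the obvious way, using that sup over deterministic times equals sup over $\mathbb{H}$-stopping times by the usual level-set argument) then gives $\|M\|_{\bmo(\mathbb{H})} = \|X\|_{\bmo(S_2,\mathbb{H})}$ and $\|N\|_{H_p} = \|Y\|_{H_p(S_2,\mathbb{H})}$. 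Crucially, $d\langle M,N\rangle_r = |X_r||Y_r|\,dr \ge 0$, so $\int_0^T |d\langle M,N\rangle_r| = \int_0^T |X_r||Y_r|\,dr$. Invoking the classical $L_p$-Fefferman inequality, $\bigl\|\int_0^T |d\langle M,N\rangle_r|\bigr\|_{L_p} \le \sqrt{2}\,p\,\|M\|_{\bmo(\mathbb{H})}\|N\|_{H_p}$, will then deliver the claimed estimate.

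The main obstacle I anticipate is the case $p \in [1,2)$, where the $H_p(S_2)$-hypothesis on $Y$ does not guarantee $\E\int_0^T |Y_r|^2\,dr < \infty$, so that $N$ is only an $\mathbb{H}$-local martingale rather than a member of $H_2$. I would circumvent this by truncating $|Y_r|$ to $|Y_r|\wedge n$, proving the inequality in the truncated setting with the constant $\sqrt{2}\,p$ uniform in $n$, and passing to the limit via monotone convergence on the left-hand side and the finite envelope $\|Y\|_{H_p(S_2,\mathbb{H})}$ on the right. The possibly different vector dimensions of $X$ and $Y$ are immaterial, since only their Euclidean norms enter the statement and the cross-bracket construction above.
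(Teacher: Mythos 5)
Your proposal is correct and takes essentially the same approach as the paper: both construct the scalar martingales $M_t = \int_0^t |X_r|\,dB_{r,1}$ and $N_t = \int_0^t |Y_r|\,dB_{r,1}$ from the first coordinate of $B$ and then invoke an $L_p$-Fefferman inequality for continuous martingales. The paper cites \cite[Lemma 1.6]{Delbaen-Tang} directly, which already supplies the constant $\sqrt{2}p$ and absorbs the local-martingale/truncation technicalities you flag for $p\in[1,2)$.
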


In this article we deduce conditional estimates on subintervals $[s,t] \subseteq [0,T]$,
and for this we need the following conditional version of Proposition \ref{proposition:Fefferman}:

\begin{proposition}\label{proposition:Fefferman_cond_new}
Assume that $X \in \bmo(S_2)$ and that $Y= (Y_r)_{r \in [0,T]}$ is predictable and such that \linebreak
{\st $\|Y\|_{H_p(S_2)} < \infty$}
for some $p\in [1,\infty)$, and let 
$c_p=(\sqrt{2p})^p$. Then we have for all $0 \le s < t \le T$ that
 \begin{equation*}
    \E^{\cF_s} \left(\int_s^t |X_r| |Y_r| dr\right)^p
\le c_p \left(\E^{\cF_s} \left(\int_s^t |Y_r|^2 dr \right)^{\frac{p}{2}}\right)
        \sup_{r \in [s,t]} \left\| \E^{\cF_r} \int_r^t |X_u|^2 du \right\|_\infty^{\frac{p}{2}}.
 \end{equation*}
\end{proposition}

%%%%%%%%%%%%%%%%%%%%%%%%%%%%%%%%%%%%%%%%%%%%%%%%%%%%%%%%%%%%%%%%%%%%%%%%%%%%%%%%%%%%%%%%%%%%%%%%%%%%%%%%%%%%%%%%%%%%%%%%%%

\section{Weighted BMO-estimates for BSDEs}
\label{section:BSDE}

{\st First we present our results in the general non-Markovian context in Section \ref{subsec:Non-Markovian_BSDEs}.
Then the results are illustrated for decoupled FBSDEs in Section \ref{subsec:decoupled_FBSDEs}
where we also discuss their sharpness in Examples \ref{example_bounded_below2} and \ref{example:bounded_below} .}

\subsection{Non-Markovian BSDEs}
\label{subsec:Non-Markovian_BSDEs}

We consider BSDEs of type
\begin{equation}\label{equation:BSDE1}
   Y_t = \xi + \int_t^T f(s,Y_s,Z_s)ds - \int_t^T Z_s dW_s, \qquad t \in [0,T],
\end{equation}
where $\xi$ is $\cF_T$-measurable, and
$f \in \mathcal{L}_0(\Om_T,\mathcal{P},\P_T;C(\R^{d+1}))$\footnote{This means that $\eta \mapsto f(\eta,x)$ is $\mathcal{P}$-measurable for all $x \in \R^{d+1}$.}.
Our strategy is to assume that $(Y,Z)$ is a solution of \eqref{equation:BSDE1}, and assume some further conditions on $Z$ 
in order to get an $L_p$-solution for {\st $p\in [2,\infty)$}.
In Example \ref{example:conditions_satisfied} we present some cases when these conditions are satisfied.
For $p\in [2,\infty)$ and $\theta \in [0,1]$, we consider the conditions:

\begin{enumerate}[{(C1)}]
\item
      There are
      $L_y,L_z\ge 0$ such that for all $(t,\om,y_0,y_1,z_0,z_1)$ one has
      \begin{equation*}
      |f(t,\om,y_0,z_0)-f(t,\om,y_1,z_1)|
      \le L_y |y_0-y_1| + L_z [1+|z_0|+|z_1|]^\theta |z_0-z_1|.
      \end{equation*}
\item $\int_0^T |f(s,0,0)| ds \in \cL_p$.
\item $\left ( \int_0^T |Z_s|^2 ds \right )^\frac{1}{2} \in \cL_p$.
\item[{(\rm C3')}] $  \int_0^T |Z_s|^{1+\theta} ds  \in \cL_p$.
\end{enumerate}

Assumptions (C1) and (C2) are conditions on the data of the BSDE, implicit conditions on the $Z$-process are (C3) and (C3').
\begin{lemma}[{\cite[Lemma 6.2]{jossain}}]\label{lemma:Ynice}
Assume that {\rm (C1)}-{\rm (C3)} and {\rm (C3')} hold for some $p \in [2,\infty)$ and $\theta \in [0,1]$. Then
 \begin{equation*}
  \int_0^T |f(s,Y_s,Z_s)|ds + \sup_{t \in [0,T]} |Y_t| \in \mathcal{L}_p.
 \end{equation*}
\end{lemma}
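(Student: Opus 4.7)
The strategy first reduces control of $\int_0^T|f(s,Y_s,Z_s)|\,ds$ to control of $\sup_t|Y_t|$, and then establishes $\sup_t|Y_t|\in L_p$ by a pointwise bound extracted from the BSDE. Applying (C2) with $y_1=z_1=0$ and using $(1+|z|)^\theta\le 1+|z|^\theta$ for $\theta\in[0,1]$ gives pointwise
\begin{equation*}
|f(s,Y_s,Z_s)|\le|f(s,0,0)|+L_y|Y_s|+L_z|Z_s|+L_z|Z_s|^{1+\theta}.
\end{equation*}
Integrating over $[0,T]$ and applying Cauchy--Schwarz to $\int_0^T|Z_s|\,ds\le T^{1/2}(\int_0^T|Z_s|^2\,ds)^{1/2}$, assumptions (C3), (C4), (C4') yield $F\in L_p$, where
\begin{equation*}
F:=\int_0^T|f(s,0,0)|\,ds+L_zT^{1/2}\Bigl(\int_0^T|Z_s|^2\,ds\Bigr)^{1/2}+L_z\int_0^T|Z_s|^{1+\theta}\,ds,
\end{equation*}
and $\int_0^T|f(s,Y_s,Z_s)|\,ds\le F+L_yT\sup_t|Y_t|$. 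It therefore suffices to show $\sup_t|Y_t|\in L_p$.

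For this, set $M_t:=\int_0^tZ_s\,dW_s$ and read the BSDE at a general $t$ and at $t=0$: the first gives $\sup_t|Y_t|\le|\xi|+\int_0^T|f|\,ds+2\sup_t|M_t|$ and the second $|\xi|\le|Y_0|+\int_0^T|f|\,ds+|M_T|$. Substituting the preceding bound on $\int|f|\,ds$ produces the pointwise inequality
\begin{equation*}
(1-2L_yT)\sup_{t\in[0,T]}|Y_t|\le|Y_0|+2F+3\sup_{t\in[0,T]}|M_t|.
\end{equation*}
Since $Y$ is continuous by (C1), $\sup_t|Y_t|$ is a.s.\ finite, so when $2L_yT<1$ one may rearrange pointwise and then take $L_p$-norms; BDG together with (C4) bounds $\|\sup_t|M_t|\|_{L_p}$, finishing this case.

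For arbitrary $T$ I would partition $[0,T]$ into subintervals $[t_{k-1},t_k]$ with $2L_y(t_k-t_{k-1})<1$ and apply the same manipulation to the BSDE restricted to $[t_{k-1},t_k]$ (with $Y_{t_k}$ playing the role of $\xi$). Each piece yields
\begin{equation*}
S_k:=\sup_{t\in[t_{k-1},t_k]}|Y_t|\le c_k\bigl(|Y_{t_{k-1}}|+F_k+\sup_{t\in[t_{k-1},t_k]}|M_t-M_{t_{k-1}}|\bigr),
\end{equation*}
and a forward induction starting from the constant $|Y_0|$ combines these into $\sup_t|Y_t|\le C(|Y_0|+F+\sup_t|M_t|)$, which lies in $L_p$. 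The main obstacle, and the reason to work pointwise throughout, is that taking $L_p$-norms prematurely would force one to invoke $\|\xi\|_{L_p}$ as an \emph{input}, whereas $\xi\in L_p$ is in fact a \emph{conclusion} of the argument; the a.s.\ finiteness of $\sup_t|Y_t|$ provided by (C1) is precisely what permits the division by $1-2L_y(t_k-t_{k-1})$ to proceed without this circularity.
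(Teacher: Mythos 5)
Your proof is correct. Note first that the paper does not prove this lemma internally---it is cited from the companion paper (Geiss--Ylinen, Lemma~6.2)---so there is no proof in this source to compare against directly. On the merits of your argument: you give an elementary pointwise a priori estimate that avoids the It\^o-plus-Gr\"onwall route more commonly seen in BSDE $L_p$-estimates (apply It\^o to $|Y_t|^2$ or $e^{\beta t}|Y_t|^2$, then Gr\"onwall), and the payoff is precisely the one you identify: because $\xi$ is eliminated \emph{pointwise} via the $t=0$ instance of the BSDE before any norms are taken, $\xi\in L_p$ comes out as a consequence of (C1)--(C4') rather than being smuggled in as a hypothesis. Your observation that the a.s.\ finiteness of $\sup_t|Y_t|$ from (C1) is what legitimizes moving the term $2L_yT\sup_t|Y_t|$ to the left-hand side is also on point---without it one would be subtracting $\infty$ from $\infty$. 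Two small items worth making explicit in a polished write-up: $Y_0$ is an a.s.\ constant, hence trivially in $L_p$, because $\cF_0$ is $\P$-trivial for the augmented Brownian filtration and $Y$ is continuous; and $M=\int_0^{\cdot}Z_s\,dW_s$ is a well-defined continuous local martingale since (C4) forces $\int_0^T|Z_s|^2\,ds<\infty$ a.s., so the Burkholder--Davis--Gundy inequality applies and, combined with (C4), yields $\sup_t|M_t|\in L_p$. The partition-and-induct step for general $T$ is routine and goes through exactly as you sketch, since there are finitely many subintervals and each right endpoint $Y_{t_{k-1}}$ is dominated by the supremum over the preceding subinterval.
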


\medskip

Another implicit condition is the following "fractional BMO-assumption":
\begin{enumerate}
\item [{\rm (C4)}] We assume that
 \begin{equation*}
   \| |Z|^\theta \|_{\bmo(S_2)}
= \sup_{t\in [0,T]} \left \| \E \left ( \int_t^T |Z_s|^{2\theta} ds | {\cF}_t \right ) \right \|_{\infty}^\half < \infty,
 \end{equation*}
and fix a non-increasing sequence $s=(s_N)_{N\ge 1} \subseteq [0,\infty)$ such that
 \begin{equation*} \sli_N(|Z|^\theta) \le s_N, \end{equation*}
and put $s_\infty := \lim_{N\to\infty} s_N$. If $s_\infty=0$, {\st then} we let $p_{{\rm (C4)}}=\frac{3}{2}$, and if $s_\infty>0$,
{\st then} we let
		  \begin{equation*} p_{{\rm (C4)}} := \frac{\Phi^{-1}(2\sqrt{2}L_z s_\infty)}{\Phi^{-1}(2\sqrt{2}L_z s_\infty)-1}, \end{equation*}
where the function $\Phi$ is defined in \eqref{function:Phi}.
\end{enumerate}

\smallskip

First we show that using (C4) we may drop the assumption (C3'):

\begin{lemma}\label{lemma:drop_condition}
For all $p \in [2,\infty)$ we have the following relations: 
\begin{enumerate}[{\rm (i)}]
\item If $\theta=0$, then {\rm (C4)} holds, and {\rm (C3)} $\Rightarrow$ {\rm (C3')}.
\item If $\theta=1$, then {\rm (C4)} $\Rightarrow$ {\rm (C3')} $\Rightarrow$ {\rm (C3)}.
\item If $\theta \in (0,1)$ and {\rm (C4)} holds, then {\rm (C3)} $\Rightarrow$ {\rm (C3')}.
\end{enumerate}
\end{lemma}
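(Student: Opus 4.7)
The plan is to treat the three cases separately, each reducing to a single standard tool from the BMO framework already introduced (with Proposition~\ref{proposition:Fefferman} applied on $\mathbb{H}=\mathbb{F}$, $B=W$).

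For (i), since $\theta=0$ makes $|Z|^\theta \equiv 1$, we have $\||Z|^\theta\|_{\bmo(S_2,\mathbb{F})} = \sqrt{T}$, so (C5) holds trivially. The implication (C4) $\Rightarrow$ (C4') then follows from Cauchy--Schwarz: $\int_0^T |Z_s|\,ds \le \sqrt{T}\bigl(\int_0^T |Z_s|^2\,ds\bigr)^{1/2}$, which lies in $\cL_p$ by (C4).

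For (ii), the direction (C4') $\Rightarrow$ (C4) is a direct Jensen estimate applied to the nonnegative random variable $X := \int_0^T |Z_s|^2\,ds$: $\E X^{p/2} \le (\E X^p)^{1/2}$. The interesting direction (C5) $\Rightarrow$ (C4') requires genuine exponential integrability. The plan here is to invoke the Garsia--Neveu energy inequality for increasing processes: setting $K := \||Z|\|_{\bmo(S_2,\mathbb{F})}^2 = \sup_\tau \|\E^{\cF_\tau}\int_\tau^T |Z_s|^2\,ds\|_\infty$, the inductive moment bound $\E\bigl(\int_0^T |Z_s|^2\,ds\bigr)^n \le n!\,K^n$ yields $\E\exp\bigl(\lambda \int_0^T |Z_s|^2\,ds\bigr) \le (1 - \lambda K)^{-1}$ for $0 < \lambda < 1/K$; all polynomial moments then exist, giving (C4'). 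This is classical BMO theory (see e.g.\ \cite{Kazamaki}).

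For (iii) with $\theta \in (0,1)$, the plan is to apply Proposition~\ref{proposition:Fefferman} directly with $X_s := |Z_s|^\theta$ and $Y_s := |Z_s|$. By (C5) one has $|Z|^\theta \in \bmo(S_2,\mathbb{F})$, and by (C4) one has $|Z| \in H_p(S_2,\mathbb{F})$, so Fefferman's inequality gives
\[
 \left\| \int_0^T |Z_s|^{1+\theta}\,ds \right\|_{L_p} \le \sqrt{2}\,p\,\||Z|\|_{H_p(S_2,\mathbb{F})}\,\||Z|^\theta\|_{\bmo(S_2,\mathbb{F})} < \infty,
\]
which is (C4'). The only genuinely nontrivial step is the exponential-integrability argument in case (ii); the remaining implications reduce to a single line of Jensen/Cauchy--Schwarz or of Fefferman.
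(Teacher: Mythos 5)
Your proposal is correct and takes essentially the same approach as the paper, which simply notes (i) is obvious, cites the John--Nirenberg inequality (\cite[Theorem 2.1]{Kazamaki}) for (ii), and applies Proposition~\ref{proposition:Fefferman} with $X=|Z|^\theta$, $Y=|Z|$ for (iii). Your Garsia--Neveu energy-inequality argument in (ii) is just an unpacking of the same John--Nirenberg exponential-integrability content that the paper cites.
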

\begin{proof}
(i) is obvious and (ii) follows immediately from John-Nirenberg inequality {\cite[Theorem 2.1]{Kazamaki}}.
Proposition \ref{proposition:Fefferman} applied to $X = |Z|^\theta$ and $Y = |Z|$ implies (iii).
\end{proof}

\begin{remark}\rm
In addition to Lemma \ref{lemma:drop_condition}, the condition {\rm (C4)} has an even more important role that we describe now.
In our results, conditions {\rm (C4)} and {\rm (C1)} are assumed to hold for the same $\theta \in [0,1]$. Then,
applying Proposition \ref{proposition:scliceable_rh}, we have that a certain martingale satisfies the reverse Hölder inequality.
This martingale is used to handle the quadratic or sub-quadratic nature of the generator $f$ in the $z$-variable.
If the number $s_\infty$ in {\rm (C4)} equals zero, then the reverse Hölder inequalities are satisfied for all indices $q \in (1,\infty)$.
On the other hand, if $s_\infty>0$, then there exists $q_0 \in (1,\infty)$ such that the reverse Hölder inequalities are satisfied for all $q \in (1,q_0)$.
From this it follows that in the case $s_\infty>0$ we need to assume more integrability than in the case $s_\infty=0$, and this is the reason for introducing the constant $p_{{\rm (C4)}}$.
Note that in the uniformly Lipschitz case, i.e. $\theta=0$, the condition {\rm (C4)} is satisfied and $s_\infty=0$.
In the sub-quadratic case, i.e. $\theta \in (0,1)$, a sufficient condition for $s_\infty=0$ is, that there exists an $\eta \in (\theta,1]$ such that $\| |Z|^\eta \|_{\rm{BMO}(S_2)} < \infty$ \linebreak
(see {\st \cite[Remark 6.4]{jossain}}).
\end{remark}

\begin{example}\label{example:conditions_satisfied}\rm
$ $
\begin{enumerate}[(i)]
 \item Assume that $f$ satisfies {\rm (C1)} and {\rm (C2)} with $\theta=0$ and $p > 1$, and that $\xi \in L_p$.
       Then there exists a unique solution $(Y,Z)$ of \eqref{equation:BSDE1}, and {\rm (C3)-(C4)} are satisfied with $\theta=0$.
       This follows for example from {\cite[Theorem 4.2]{Briand:02}}. Note that since $\theta=0$, we have $s_\infty=0$.
 \item Assume that $f$ satisfies {\rm (C1)} and {\rm (C2)} with $\theta=1$ and $p = \infty$, and that $\xi \in L_\infty$.
       Then there exists a solution $(Y,Z)$ of \eqref{equation:BSDE1}
       such that {\rm (C3)-(C4)} are satisfied with $\theta=1$ and all $p \in [2,\infty)$.
       This follows for example from {\cite[Theorem 2.6 and Lemma 3.1]{Morlais}} (see also \cite{Briand-Hu}).
\item Assume that $f$ satisfies {\rm (C1)} with $\theta_0 \in (0,1)$, and is such that 
$\sup_{(r,\om)}|f(r,\om,0,0)| < \infty$. Also, assume that $\xi \in \rm{cExp}$, which means that there exists some $\mu \in (0,\infty)$ such that
 \begin{equation*}
  \sup_{t \in [0,T)} (T-t)\left\| \E \left[ e^{\mu |\xi|} \mitt {\cF_t} \right] \right\|_\infty < \infty.
 \end{equation*}
       Then there exists a solution $(Y,Z)$ of \eqref{equation:BSDE1} such that {\rm (C3)-(C4)} are satisfied with $p =2$ and all $\theta \in (0,1)$, so 
       that $s_\infty=0$ (see \cite[Theorem 6.13]{jossain}).
\end{enumerate}
\end{example}

Our final assumption is a weighted $\bmo$-condition on $\xi$ and $f$ on a subinterval $[s,t]\subseteq[0,T]$. This is used in the following way: if (C1)-(C4) are satisfied
and Assumption \ref{assumption:BMO} holds on an interval $[s,t]$,
then {\em on this interval} we have a weighted BMO-estimate and a tail estimate of $(Y,Z)$.

\begin{assumption}\label{assumption:BMO}
{\st Let $p\in [2,\infty)$ and $0 \le s < t \le T$.} There are non-negative c\`adl\`ag {\st processes}
$(\weight^\xi_{p,s,u,t})_{u \in [s,t]}$ and $(\weight^f_{p,s,u,t})_{u \in [s,t]}$
{\st such that $((\weight^\xi_{p,s,u,t})^p)_{u \in [s,t]}$ and $((\weight^f_{p,s,u,t})^p)_{u \in [s,t]}$
are supermartingales and}
which satisfy{\st,} for any 
$u \in [s,t]$,
\begin{enumerate}
\item [{\rm (C5)}] ${\st \left ( \E^{\cF_u}|\xi-\E^{\cG_u^t}\xi|^p \right )^\frac{1}{p}}  \le \weight^\xi_{p,s,u,t}$,
\item [{\rm (C6)}] ${\st \left ( \E^{\cF_u} \left(\int_u^T \sup_{y,z} |f(r,y,z)-(\E^{\cH_u^t}f)(r,y,z)|dr\right)^p \right )^\frac{1}{p}} \le \weight^f_{p,s,u,t}$,
\end{enumerate}
where {\st $\cG_u^t$ and $\cH_u^t$ are given in \eqref{eqn:sigma_algebras} and}
$(\E^{\cH_u^t}f):\Om_T \to C(\R\times\R^d)$ is the\footnote{Existence and uniqueness of such a process is proven in Lemma \ref{lemma:(ii)} below.}
$\cH_u^t$-measurable process with
 \begin{equation*}
  \P_T\left( \E^{\cH_u^t} (f(x)) = (\E^{\cH_u^t} f)(x)\right)=1 \mbox{ for all } x \in \R\times\R^d.
 \end{equation*}
{\st To shorten the notation, we use
\[ \weight_{p,s,u,t}^{\xi,f} :=  \left ( (\weight_{p,s,u,t}^\xi)^p + (\weight_{p,s,u,t}^f)^p \right )^\frac{1}{p}.\]
 }
 \end{assumption}

\begin{remark}\rm
 For a fixed $u \in (s,t]$, the weight $\weight^\xi_{p,s,u,t}$ is an upper bound for 
{\st $\left ( \E^{\cF_u}|\xi-\E^{\cG_u^t}\xi|^p\right )^\frac{1}{p}$}, so we expect $\weight^\xi_{p,s,u,t}$ to depend on $u$ and $t$, but not on $s$.
We use a notation where the $s$ is included, since we want to emphasize the fact that Assumption \ref{assumption:BMO} is an assumption on the behaviour of $(\xi,f)$ \emph{on the interval} $[s,t]$.
\end{remark}

We are ready to give our main result.

\begin{theorem}
\label{theorem:main}
Assume {\rm (C1)-(C6)} for $\theta \in [0,1]$, $p \in [2,\infty)\cap(p_{{\rm (C4)}},\infty)$, and $0 \le s < t \le T$.
Then the following assertions hold true:
 \begin{enumerate}
 	\item[$\mathbf{(i)}$] There exists
 	          $c_{\eqref{theorem:main}}>0$ depending at most on
 	          $(T,d,p,L_y,L_z,(s_N)_{N\in \N})$ 
 	          such that for any stopping time $\tau:\Om \to [s,t]$ we have
               \begin{equation}\label{equation:BMO}
 	            {\st \left (\E^{\ftn_\tau} |Y_t-Y_\tau|^p \right )^\frac{1}{p}
 	            \le c_{\eqref{theorem:main}} \weight_{p,s,\tau,t}},
 	           \end{equation}
 	          where
               \equa
 	            \weight_{p,s,u,t}^p = {\st \left(\weight^{\xi,f}_{p,s,u,t} \right )^{p}}
 	           +  \E^{\ftn_u}  \left(\int_u^t|f(r,0,0)|dr\right)^p 
 	            +     (t-u)^p\left[ \E^{\ftn_u} \left( |\xi| +
 	            \int_t^T|f(r,0,0)|dr\right)^p \right].
 	           \tion
 	\item[$\mathbf{(ii)}$] There exists
 	          $d_{\eqref{theorem:main}}>0$ depending at most on $(T,d,p,L_y,L_z,(s_N)_{N\in \N})$
 	          	          such that for any stopping time $\tau:\Om \to [s,t]$ we have
 	           \begin{equation*}
 	            {\st \left ( \E^{\cF_\tau}
 	            \left(\int_{\tau}^{t} |Z_r|^2 dr\right)^{\frac{p}{2}} \right )^\frac{1}{p}
 	            \le d_{\eqref{theorem:main}}
 	            \weight_{p,s,\tau,t}^{\xi,f}.}
 	           \end{equation*}
 	         \end{enumerate}
\end{theorem}
{\st Theorem \ref{theorem:main}} is proved in Section \ref{subsection:theorem:main} below.
The main {\st application} of Theorem \ref{theorem:main} are the following tail estimates:

\begin{theorem}\label{theorem:tail-estimate}
Under the assumptions of Theorem \ref{theorem:main} there exists an absolute constant ${\st c_0}>0$
such that for any stopping time $\tau:\Om \to [s,t]$ we have
\begin{align*}
&\mathbf{(i)} \
  \P_B\left( \sup_{u \in [\tau,t]} \frac{|Y_u-Y_\tau|}{c_{\eqref{theorem:main}}}  > \lambda + {\st c_0} \mu \nu \right)
	\le \hfill e^{1-\mu} \P_B\left( \sup_{u \in [\tau,t]}\frac{|Y_{u}-Y_{\tau}|}{c_{\eqref{theorem:main}}} > \lambda \right)
	+ {\st c_0} \P_B\left(\sup_{u \in [\tau,t]}\weight_{p,s,u,t} > \nu \right), \\
&\mathbf{(ii)} \
\P_B \left( \sup_{u \in [\tau,t]}\left|\frac{\int_{\tau}^{u} Z_r dW_r}{d_{\eqref{theorem:main}} \beta_p}\right| > \lambda + {\st c_0} \mu\nu \right)
\le \hfill e^{1-\mu} \P_B\left( \sup_{u \in [\tau,t]}\left|\frac{\int_{\tau}^{u} Z_r dW_r}{d_{\eqref{theorem:main}} \beta_p}\right| > \lambda \right)
+ {\st c_0} \P_B\left(\sup_{u \in [\tau,t]} {\st \weight_{p,s,u,t}^{\xi,f}}> \nu \right), \\
&\mathbf{(iii)} \
\P_B \left(
\frac{\left(\int_{\tau}^{t} |Z_r|^2 dr\right)^{\frac{1}{2}}}{d_{\eqref{theorem:main}}}
> \lambda + {\st c_0} \mu\nu \right)
\le \hfill e^{1-\mu} \P_B \left(
\frac{\left(\int_{\tau}^{t} |Z_r|^2 dr\right)^{\frac{1}{2}}}{d_{\eqref{theorem:main}}} > \lambda \right)
+  {\st c_0} \P_B\left(\sup_{u \in [\tau,t]} {\st \weight_{p,s,u,t}^{\xi,f}}
> \nu \right),
\end{align*}
for all $\lambda,\mu,\nu>0$ and any $B \in \cF_\tau$ of positive measure, and where $\beta_p$ is the constant from Proposition \ref{proposition:BDG}.
\end{theorem}

\begin{proof}
{\st As the tail estimates follow from Theorem \ref{theorem:Stefan1} below in Appendix C},
we show that the assumptions of Theorem \ref{theorem:Stefan1} follow from Theorem \ref{theorem:main}.
Assume (C1)-(C6) for $\theta \in [0,1]$, 
$p \in [2,\infty)\cap(p_{{\rm (C4)}},\infty)$, and $0 \le s < t \le T$.
Let $\epsilon>0$,
$\thteta \in (0,\half)$, and $R:= t-s$. \\

$\mathbf{(i)}$ Define, for $r \in [0,R]$,
 \equa
  \gtn_r	&:=& \ftn_{r+s}, \\
  A_r		&:=& \frac{(Y_{r+s}-Y_s) \thteta^{1/p}}{c_{\eqref{theorem:main}}}, \\
  \Psi_{r}	&:=& \weight_{p,s,r+s,t}\vee\epsilon,
 \tion
where {\st $(\weight_{p,s,u,t})_{u\in [s,t]}$} is the weight process from Theorem \ref{theorem:main}. For $0 \le a < b$ and a filtration $(\cH_r)_{r \in [a,b]}$ we introduce the notation
\begin{equation*}
  \mathcal{S}^\cH_{a,b} := \left\{ \tau:\Om\to[a,b] \mitt \tau \text{ is a } (\cH_r)_{r \in [a,b]} \text{-stopping time} \right\},
\end{equation*}
so that in particular $\mathcal{S}^{\cG}_{0,R}+s = \mathcal{S}^\cF_{s,t}$.
Then the assumptions of Theorem \ref{theorem:Stefan1} are fulfilled.
As the other assumptions are obvious, we will only show that equation \eqref{equation:(1)} holds.
Using Theorem \ref{theorem:main}
we deduce
 \equa
 	    \sup_{\tau \in \mathcal{S}^{\cG}_{0,R}}
		\left|\left|\E \left[ \frac{|A_{R}-A_{\tau}|^p}{\Psi_{\tau}^p} \mitt \cG_\tau \right]\right|\right|_\infty
  &=& \sup_{\tau \in \mathcal{S}^{\cG}_{0,R}}
		\left|\left|\E \left[ \frac{|Y_t-Y_{\tau+s}|^p}{\weight_{p,s,\tau+s,t}^p\vee\epsilon^p} \mitt \ftn_{\tau + s} \right]\right|\right|_\infty
		\frac{\thteta}{c_{\eqref{theorem:main}}^p} \\
	&=& \sup_{\tilde\tau \in \mathcal{S}_{s,t}^{\cF}}
		\left|\left|\E \left[ \frac{|Y_t-Y_{\tilde\tau}|^p}{\weight_{p,s,\tilde\tau,t}^p\vee\epsilon^p} \mitt \ftn_{\tilde\tau} \right]\right|\right|_\infty
		\frac{\thteta}{c_{\eqref{theorem:main}}^p}
  \le \thteta.
 \tion
Hence we have by Chebyshev's inequality that for any 
$\nu>0$, $\tau \in \mathcal{S}^\cG_{0,R}$, and $B \in \gtn_\tau$ of positive measure:
 \begin{equation*}
  \P_B(|A_{R}-A_\tau|> \nu)
  \le \P_B(|A_{R}-A_\tau| > \Psi_{\tau})
    + \P_B(\Psi_{\tau} > \nu)
  \le \thteta + \P_B(\Psi_{\tau}>\nu).
 \end{equation*}
Letting $\epsilon \to 0$ implies the claim. \\

$\mathbf{(ii)}$ The claim follows analogously as $\mathbf{(i)}$, when we choose
 \equa
  \gtn_r	&:=& \ftn_{r+s}, \\
  A_r		&:=& \frac{\int_s^{r+s} Z_v dW_v
  	        \thteta^{1/p}}{d_{\eqref{theorem:main}} \beta_p}, \\
  \Psi_{r}	&:=& {\st \weight_{p,s,r+s,t}^{\xi,f} \vee\epsilon},
 \tion
where $\beta_p$ is the constant from Proposition \ref{proposition:BDG},
as then we have by Theorem \ref{theorem:main} that
 \equa
       \sup_{\tau \in \mathcal{S}^{\cG}_{0,R}}
   \left|\left|\E \left[ \frac{|A_{R}-A_{\tau}|^{p}}{\Psi_{\tau}^{p}} \mitt \cG_\tau \right]\right|\right|_\infty
 &=& \sup_{\tau \in \mathcal{S}^{\cG}_{0,R}}
 \left|\left|\E \left[ \frac{\left|\int_{\tau+s}^{t} Z_v dW_v\right|^{p}}{\st \left(\weight_{p,s,\tau+s,t}^{\xi,f}
 	 \right)^p \vee\epsilon^p} \mitt \ftn_{\tau + s} \right]\right|\right|_\infty
 \frac{\thteta}{\left(d_{\eqref{theorem:main}} \beta_p\right)^p} \\
 &\le& \sup_{\tilde\tau \in \mathcal{S}_{s,t}^{\cF}}
     \left|\left|\E \left[ \frac{(\int_{\tilde\tau}^{t} |Z_v|^2 dv)^{\frac{p}{2}}}{\st \left(\weight_{p,s,\tilde\tau,t}^{\xi,f}
 	  \right)^p \vee\epsilon^p} \mitt \cF_{\tilde\tau} \right]\right|\right|_\infty
 \frac{\thteta}{\left(d_{\eqref{theorem:main}}\right)^p}
 \le \thteta.
 \tion

\bigskip

$\mathbf{(iii)}$ The claim follows analogously, when we choose
\equa
\gtn_r	&:=& \ftn_{r+s}, \\
A_r		&:=& \frac{(\int_s^{r+s} |Z_v|^2 dv)^{\frac{1}{2}}\thteta^{\frac{1}{p}}}{d_{\eqref{theorem:main}}}, \\
\Psi_{r}	&:=& {\st \weight_{p,s,r+s,t}^{\xi,f} \vee\epsilon}.
\tion
\end{proof}

\subsection{Decoupled FBSDEs}\label{subsection:FBSDE}
\label{subsec:decoupled_FBSDEs}

We fix $x \in \R^d$ and consider the decoupled FBSDE
 \begin{eqnarray}\label{equation:BSDE}
   X_t &=& x + \int_0^t b(r,X_r)dr + \int_0^t \sigma(r,X_r)dW_r,     \quad t \in [0,T], \nonumber \\
   Y_t &=& g(X_T) + \int_t^T h(r,X_r,Y_r,Z_r)dr - \int_t^T Z_r dW_r, \quad t \in [0,T].
 \end{eqnarray}

\begin{assumption}\label{assumption:FBSDE}
The functions $b:[0,T]\times\R^d \to \R^d$, $\sigma:[0,T]\times\R^d\to \R^{d\times d}$ and
$h:[0,T]\times \R^d\times \R \times \R^{d} \to \R$ are continuous, and furthermore we assume:

\begin{itemize}
 \item[$(A_{b,\sigma})$] There exists $L_{b,\sigma}>0$ such that for all $0 \le t \le T$ and $x,y\in\R^{d}$ one has
		\begin{equation*}
		  |b(t,x)-b(t,y)| + |\sigma(t,x)-\sigma(t,y)| \le L_{b,\sigma} |x-y|.
		\end{equation*}
 \item[$(A_g)$]  There exists $L_g>0$ such that for all $x,y \in \R^d$ one has
                 \begin{equation*}
                   |g(x)-g(y)| \le L_g |x-y|.
                 \end{equation*}
 \item[$(A_h)$]  There exists $L_h>0$ such that for all $0 \le t \le T$ and $x_i,z_i \in \R^d$, $y_i \in \R$, $i=1,2$, one has
                 \begin{equation*}
                   |h(t,x_1,y_1,z_1)-h(t,x_2,y_2,z_2)| \le L_h (|x_1-x_2| + |y_1-y_2| + |z_1-z_2|).
                 \end{equation*}
\end{itemize}
\end{assumption}

\begin{remark}\rm\hfill
 \begin{itemize}
   \item[(1)] In particular it follows from Assumption \ref{assumption:FBSDE}, that there exist constants $L_h,K_h,K_{b,\sigma}>0$ such that we have
    \equa
     |h(t,x,y,z)|              &\le& K_h + L_h (|x| + |y| + |z|), \\
     |b(t,x)| + |\sigma(t,x)|  &\le&K_{b,\sigma}(1+|x|),
    \tion
   for all $(t,x,y,z) \in [0,T]\times \R^d\times \R\times \R^{d}$.
   \item[(2)] Under Assumption \ref{assumption:FBSDE}, there exists a  unique solution $(X,Y,Z)$ to FBSDE \eqref{equation:BSDE} and it holds
    \begin{equation*}
     \E \left[ \sup_{r \in [0,T]} |X_r|^p + \sup_{r \in [0,T]} |Y_r|^p + \left(\int_0^T |Z_r|^2 dr \right)^{\frac{p}{2}} \right] < \infty
    \end{equation*}
   for all $p \ge 2$ (see for example {\cite[Theorem 4.2]{Briand:02}}).
\end{itemize}
\end{remark}

Assumption $(A_{b,\sigma})$ is a classical assumption for the forward equation. If $(A_{b,\sigma})$ holds, then we have a weighted BMO-estimate for the forward process $X$ (see Lemma \ref{lemma:BMO_X}). Using this together with $(A_g)$ and $(A_h)$ we receive a weighted BMO-estimate for $(Y,Z)$, which gives us a tail-estimate for $(Y,Z)$.
If we assume in addition to $(A_{b,\sigma})$ that $\sigma$ is bounded, then the BMO-estimates for $(X,Y,Z)$ are improved.

 \begin{itemize}
  \item[($A_\sigma$)] {\em There exists $K_\sigma>0$ such that for all $0 \le t \le T$ and $x \in \R^d$ one has
                       \begin{equation*}
                        |\sigma(t,x)| \le K_\sigma.
                       \end{equation*}}
 \end{itemize}

{\st First let us give the weights from Assumption \ref{assumption:BMO} for the FBSDE case:}

\begin{example}\label{example:special_case}
Assume that Assumption \ref{assumption:FBSDE} holds. Then
assumptions {\rm (C1)-(C6)} hold true for $\theta=0$, all \linebreak
$p\in [2,\infty)$, and all $0 \le s < t \le T$.
Moreover, there exists $c_{\eqref{example:special_case}}>0$ depending at most on \linebreak
$(T,d,p,L_g,L_h,L_{b,\sigma},K_{b,\sigma})$ such that we may choose
\begin{equation*} 
\weight^f_{p,s,u,t}=\weight^\xi_{p,s,u,t}={\st c_{\eqref{example:special_case}} (t-u)^{1/2} \left(1+\E \left[ \sup_{r \in [u,t]}|X_r|^p \mitt \cF_u \right]\right)^\frac{1}{p}} 
\end{equation*}
for all $0 \le s \le u \le t \le T$. If additionally $(A_\sigma)$ holds, then there exists $d_{\eqref{example:special_case}}>0$ depending at most on $(T,d,p,L_g,L_h,L_{b,\sigma},K_{\sigma})$ such that we may choose
 \begin{equation*} \weight^f_{p,s,u,t}=\weight^\xi_{p,s,u,t}={\st d_{\eqref{example:special_case}} (t-u)^{1/2}}. \end{equation*}
\end{example}

{\st Example \ref{example:special_case} is proved in Section \ref{subsec:proof_lemma:special_case} below.
Now our first result is a consequence of Theorem \ref{theorem:main}:}
\bigskip

\begin{theorem}\label{theorem:main_FBSDE}
Assume that Assumption \ref{assumption:FBSDE} holds 
and let {\st $p\in [2,\infty)$}. Then the following assertions hold true:
\begin{itemize}
 \item[$\mathbf{(i)}_Y$] There exists
            $c_{\eqref{theorem:main_FBSDE}}>0$, depending at most on $(T,d,p,L_h,L_g,L_{b,\sigma},K_{b,\sigma},K_h)$, such that for any \linebreak
            $0 \le s < t \le T$ and any stopping time $\tau:\Om \to [s,t]$ we have
             \begin{equation*}
                  \E \left(|Y_t-Y_\tau|^p\mitt \ftn_\tau \right)
           	  \le c_{\eqref{theorem:main_FBSDE}}^p (t-\tau)^{p/2} {\st [1 + |X_\tau|^p]}.
             \end{equation*}
 \item[$\mathbf{(i)}_Z$] There exists
            $C_{\eqref{theorem:main_FBSDE}}>0$ depending at most on $(T,d,p,L_h,L_g,L_{b,\sigma},K_{b,\sigma})$
            such that for any $0 \le s < t \le T$ and any stopping time $\tau:\Om \to [s,t]$ we have
             \begin{equation*}
               \E \left( \left(\int_{\tau}^{t} |Z_r|^2 dr\right)^{\frac{p}{2}}\mitt \cF_\tau \right)
             \le C_{\eqref{theorem:main_FBSDE}}^p
                 (t-\tau)^{\frac{p}{2}}{\st [1+|X_\tau|^p]}.
             \end{equation*}
\end{itemize}
If additionally $(A_\sigma)$ holds, then we have:
\begin{itemize}
 \item[$\mathbf{(ii)}_Y$] There exists
            $d_{\eqref{theorem:main_FBSDE}}>0$, depending at most on $(T,d,p,L_h,L_g,L_{b,\sigma},K_{b,\sigma},K_h,K_{\sigma})$, such that for any $0 \le s < t \le T$ and any stopping time $\tau:\Om \to [s,t]$ we have
             \begin{equation*}
               \E \left( |Y_t-Y_\tau|^p \mitt \ftn_\tau \right) \le d_{\eqref{theorem:main_FBSDE}}^p(t-\tau)^{p/2}
                    {\st [1+|X_\tau|^p(t-\tau)^{p/2}]}.
             \end{equation*}
 \item[$\mathbf{(ii)}_Z$] There exists
        $D_{\eqref{theorem:main_FBSDE}}>0$ depending at most on $(T,d,p,L_h,L_g,L_{b,\sigma},K_{\sigma})$
            such that for any $0 \le s < t \le T$ and any stopping time $\tau:\Om \to [s,t]$ we have
             \begin{equation*}
               \E^{\cF_\tau} \left(\int_{\tau}^{t} |Z_r|^2 dr\right)^{\frac{p}{2}}
             \le D_{\eqref{theorem:main_FBSDE}}^p 
                 (t-\tau)^{\frac{p}{2}}.
             \end{equation*}
\end{itemize}
\end{theorem}

\begin{proof}
$\mathbf{(i)}_Y$ Because of Example  \ref{example:special_case} we may use Theorem \ref{theorem:main} to obtain for any 
$0 \le s < t \le T$ and any stopping time $\tau \in [s,t]$ that
 \equa
  {\st \frac{1}{c_{\eqref{theorem:main}}^p}} \E^{\ftn_\tau} |Y_t-Y_\tau|^p
	&\le& 2 c_{\eqref{example:special_case}}^p (t-\tau)^{p/2}
	    \left(1+\E \left[ \sup_{r \in [\tau,t]}|X_r|^p \mitt \cF_\tau \right]\right)
	+ \E^{\ftn_\tau} \left(\int_\tau^t |h(r,X_r,0,0)| dr\right)^{p} \\
	&&+ (t-\tau)^{p} \E^{\ftn_\tau}\left( g(X_T)^p + \left(\int_t^T |h(r,X_r,0,0)|dr\right)^p \right).
 \tion
Using $(A_g)$, $(A_h)$ and the fact
 \begin{equation}\label{equation:simppeli}
  \E^{\ftn_\tau} \sup_{\tau\le r \le T} |X_r|^p \le C^p(1+|X_\tau|^p),
 \end{equation}
where $C$ depends at most on $(T,p,K_{b,\sigma})$,
we may deduce
 \equa
  \E^{\ftn_\tau} |Y_t-Y_\tau|^p
	\le c_{\eqref{theorem:main_FBSDE}}^p (t-\tau)^{p/2} \left[ 1 + |X_\tau|^p \right],
 \tion
where $c_{\eqref{theorem:main_FBSDE}}>0$ depends at most on $(T,d,p,L_g,L_h,L_{b,\sigma},K_{b,\sigma},K_h)$.
Assertions $\mathbf{(i)}_Z$ and $\mathbf{(ii)}_Y$ follow analogously by applying Example \ref{example:special_case}, Theorem \ref{theorem:main}, and inequality \eqref{equation:simppeli}.
Assertion $\mathbf{(ii)}_Z$, on the other hand, follows directly from Example \ref{example:special_case} and Theorem \ref{theorem:main}.
\end{proof}

{\st One application} of Theorem \ref{theorem:main_FBSDE} are tail estimates of exponential type for $(Y,Z)$.
{\st In Theorem \ref{theorem:FBSDE_tail_Y} we treat the process $Y$ and in 
Theorem \ref{theorem:FBSDE_tail_Z} the  process $Z$. These theorems follow from Theorem \ref{theorem:main_FBSDE} using Theorem \ref{theorem:Stefan1}
analogous to the proof of Theorem \ref{theorem:tail-estimate}.}
\bigskip

\pagebreak

\begin{theorem}\label{theorem:FBSDE_tail_Y}
Assume that Assumption \ref{assumption:FBSDE} holds.
Then there exists an absolute constant {\st $c_0>0$} such that the following holds:
\begin{enumerate}
	\item[$\mathbf{(i)}$] For any $0 \le s < t \le T$ and any stopping time $\tau:\Om \to [s,t]$ we have
 \begin{equation*}
 \P_B\left( \sup_{u \in [\tau,t]} \frac{|Y_u-Y_\tau|}{c_{\eqref{theorem:main_FBSDE}}\sqrt{t-s}}
   > \lambda + {\st c_0} \mu \nu \right)
	\le e^{1-\mu} \P_B\left( \sup_{u \in [\tau,t]} \frac{|Y_u-Y_\tau|}{c_{\eqref{theorem:main_FBSDE}}\sqrt{t-s}} > \lambda \right)
	 + {\st c_0} \P_B\left( \sup_{u \in [\tau,t]} |X_u|^2 > \nu^2-1 \right)
 \end{equation*}
for all $\lambda,\mu,\nu>0$ and all $B \in \ftn_\tau$ of positive measure.
     \item[$\mathbf{(ii)}$] If additionally $(A_\sigma)$ holds, then we have
 \begin{multline*}
 \P_B\left( \sup_{u \in [\tau,t]} \frac{|Y_u-Y_\tau|}{d_{\eqref{theorem:main_FBSDE}}\sqrt{t-s}}
   > \lambda + {\st c_0} \mu \nu \right)\\
	\le e^{1-\mu} \P_B\left( \sup_{u \in [\tau,t]} \frac{|Y_u-Y_\tau|}{d_{\eqref{theorem:main_FBSDE}}\sqrt{t-s}} > \lambda \right)
	 + {\st c_0} \P_B\left( \sup_{u \in [\tau,t]} |X_u|^2(t-u) > \nu^2-1 \right)
 \end{multline*}
for all $\lambda,\mu,\nu>0$ and all $B \in \ftn_\tau$ of positive measure.
\end{enumerate}
\end{theorem}
\bigskip

\begin{theorem}\label{theorem:FBSDE_tail_Z}
Assume that Assumption \ref{assumption:FBSDE} holds. Then there exists an absolute constant {\st $c_0>0$} such that the following holds:
 \begin{enumerate}
 	\item[$\mathbf{(i)}$] For any
 	     $0 \le s < t \le T$ and any stopping time $\tau:\Om \to [s,t]$ we have
 	      \begin{multline*}
 	       \P_B \left( \sup_{u \in [\tau,t]}\left|\frac{\int_{\tau}^{u} Z_r dW_r}{C_{\eqref{theorem:main_FBSDE}} \beta_2\sqrt{t-s}}\right| > \lambda + {\st c_0}\mu\nu \right) \\
 	       \le e^{1-\mu} \P_B\left( \sup_{u \in [\tau,t]}\left|\frac{\int_{\tau}^{u} Z_r dW_r}{C_{\eqref{theorem:main_FBSDE}} \beta_2\sqrt{t-s}}\right| > \lambda \right)
 	       + {\st c_0} \P_B\left( \sup_{u \in [\tau,t]} |X_u|^2 > \nu^2-1 \right)
 	      \end{multline*}
 	      for all $\lambda,\mu,\nu>0$ and all $B \in \cF_\tau$ of positive measure, and where $\beta_2$ is the constant from Proposition \ref{proposition:BDG}.
 	\item[$\mathbf{(ii)}$] If additionally $(A_\sigma)$ holds, then
 	      \begin{equation*}
 	       \P_B \left( \sup_{u \in [\tau,t]}\left|\frac{\int_{\tau}^{u} Z_r dW_r}{D_{\eqref{theorem:main_FBSDE}} \beta_2\sqrt{t-s}}\right| > \lambda + {\st c_0}\mu \right) \\
 	       \le e^{1-\mu} \P_B\left( \sup_{u \in [\tau,t]}\left|\frac{\int_{\tau}^{u} Z_r dW_r}{D_{\eqref{theorem:main_FBSDE}} \beta_2\sqrt{t-s}}\right| > \lambda \right)	      
 	      \end{equation*}
 	     for all $\lambda,\mu>0$ and all $B \in \cF_\tau$ of positive measure, and where $\beta_2$ is the constant from Proposition \ref{proposition:BDG}.
 \end{enumerate}
\end{theorem}

One might ask if it is necessary to use the theory of \emph{weighted} $\bmo$ instead of non-weighted $\bmo$.
The following example shows that the weight processes of Theorem \ref{theorem:main_FBSDE} $\mathbf{(i)}_Y$ and $\mathbf{(i)}_Z$ are sharp:

\begin{example}\label{example_bounded_below2}
	Consider the FBSDE
	\equa
	 X_t &=& \int_0^t \sqrt{4e^{-s}+X_s^2} dW_s, \quad t \in [0,T], \\
	 Y_t   &=& X_T- \int_t^T Z_r dW_r, \quad t \in [0,T].
	\tion
	This FBSDE is of the same form as \eqref{equation:BSDE} with $d=1$, $b\equiv 0$, 
	$\sigma(t,x) = \sqrt{4e^{-t}+x^2}$, $h\equiv 0$ and $g(x)=x$, so that Assumption \ref{assumption:FBSDE} holds.
	Now we have for all {\st $p\in [2,\infty)$} and all $0 \le s < t \le T$ that
	\begin{equation*}
	\E \left[ |Y_t-Y_s|^p \mitt \cF_s \right]
	\ge \frac{(t-s)^{\frac{p}{2}} \left(1+|X_s|^p\right)}{e^{\frac{Tp}{2}}},
	\end{equation*}
	as well as
	\begin{equation*}
	\E \left[ \left(\int_{s}^t |Z_r|^2 dr \right)^{\frac{p}{2}} \mitt \cF_s \right]
	\ge \frac{(t-s)^{\frac{p}{2}}\left(1+|X_s|^p\right)}{\beta_p^p e^{\frac{Tp}{2}}} ,
	\end{equation*}
	where $\beta_p$ is the constant from Proposition \ref{proposition:BDG}.
\end{example}
\begin{proof}
	First note that $Y_t = X_t$, and that $X_t =  2\sinh(W_t)e^{-\frac{t}{2}} = e^{W_t - \frac{t}{2}} - e^{-W_t - \frac{t}{2}}.$
	Furthermore, we have the equalities:
	\equa
	 \E^{\cF_s} |e^{W_t - \frac{t}{2}} - e^{W_s - \frac{s}{2}}|^2
	 &=& |e^{W_s - \frac{s}{2}}|^2 (e^{t-s} - 1), \\
	 \E^{\cF_s} |e^{-W_t - \frac{t}{2}} - e^{-W_s - \frac{s}{2}}|^2
	 &=& |e^{-W_s - \frac{s}{2}}|^2 (e^{t-s} - 1), \\
	\E^{\cF_s} (e^{W_t - \frac{t}{2}} - e^{W_s - \frac{s}{2}})(e^{-W_t - \frac{t}{2}} - e^{-W_s - \frac{s}{2}})
	&=& e^{-t}(1-e^{t-s}), \\
   |X_s|^2 + 2e^{-s}
    &=& |e^{W_s - \frac{s}{2}}|^2 + |e^{-W_s - \frac{s}{2}}|^2, \\	 	
	\tion
	so that
	 \equa
	  \E^{\cF_s} |Y_t-Y_s|^2 &=& \E^{\cF_s} \left|e^{W_t - \frac{t}{2}}-e^{W_s - \frac{s}{2}} - \left( e^{-W_t - \frac{t}{2}} - e^{-W_s - \frac{s}{2}} \right)\right|^2 \\
	                         &=& |e^{W_s - \frac{s}{2}}|^2 (e^{t-s}-1)
	                           + |e^{-W_s - \frac{s}{2}}|^2 (e^{t-s}-1)
	                           - 2 e^{-t} (1-e^{t-s}) \\
	                         &=& (e^{t-s}-1) \left(|X_s|^2 + 2(e^{-t} + e^{-s}) \right) \\
	                         &\ge& (t-s) \left(|X_s|^2 + 1) \right) e^{-T}.
	 \tion
		Since $\frac{p}{2} \ge 1$, we also have
	\equa
	\E^{\ftn_s} |Y_t-Y_s|^p
	\ge \left(\E^{\ftn_s} |Y_t-Y_s|^2\right)^{\frac{p}{2}}
	\ge \left( e^{-T}(t-s) (1+|X_s|^2) \right)^{\frac{p}{2}}
	\ge e^{-\frac{Tp}{2}}(t-s)^{\frac{p}{2}}\left(1+|X_s|^p\right).
	\tion
	
	The result for the $Z$-process follows now immediately from
	\begin{equation*}
	\int_s^t Z_r dW_r = Y_t-Y_s.
	\end{equation*}
\end{proof}

The following example shows that the weight processes of Theorem \ref{theorem:main_FBSDE} $\mathbf{(ii)}_Y$ and $\mathbf{(ii)}_Z$ are sharp:

\begin{example}\label{example:bounded_below}
Consider the FBSDE
 \equa
  X_t &=& \int_0^t 1 dW_s, \quad t \in [0,T], \\
  Y_t &=& X_T + \int_t^T X_s ds - \int_t^T Z_s dW_s, \quad t \in [0,T],
 \tion
This FBSDE is of the same form as \eqref{equation:BSDE} with $d=1, b\equiv 0, \sigma \equiv 1$, $h(t,x,y,z)=x$, and $g(x)=x$, so that Assumptions \ref{assumption:FBSDE} and $(A_\sigma)$ hold.
Now we have for all {\st $p\in [2,\infty)$}, and all $0 \le s < t \le T$ that
 \begin{equation*}
  \E \left[ |Y_t - Y_s|^p \mitt \ftn_s \right] \ge (t-s)^{p/2}(1+|X_s|^p(t-s)^{p/2})
 \end{equation*}
as well as
 \begin{equation*}
  \E^{\cF_s} \left(\int_{s}^{t} |Z_r|^2 dr \right)^{\frac{p}{2}}
  \ge (t-s)^{\frac{p}{2}}.
 \end{equation*}
\end{example}
 \begin{proof}
We have for all $r \in [0,T]$ that
\begin{equation*}
  Y_r = \E^{\ftn_r} \left[ W_T + \int_r^T W_u du \right]
      = W_r(1+T-r),
 \end{equation*}
and therefore
 \equa
       \E^{\ftn_{s}} |Y_t-Y_s|^2
 &=&   \E^{\ftn_{s}} |W_t(1+T-t)-W_s(1+T-s)|^2 \\
 &=&   (t-s)(1+T-t)^2 + |W_s|^2(t-s)^2 \\
 &\ge& (t-s) (1 + |W_s|^2(t-s)).
 \tion
Since $\frac{p}{2} \ge 1$, we deduce
 \equa
       \E^{\ftn_{s}} |Y_t-Y_s|^p
\ge  \left[\E^{\ftn_{s}} |Y_t-Y_s|^2\right]^{p/2}
\ge  \left[(t-s) (1 + |W_s|^2(t-s))\right]^{p/2}
\ge  (t-s)^{p/2} (1 + |W_s|^p(t-s)^{p/2}).
 \tion
The result for the $Z$-process follows immediately from the fact that 
$Z_r = 1+(T-r)$.
\end{proof}

%%%%%%%%%%%%%%%%%%%%%%%%%%%%%%%%%%%%%%%%%%%%%%%%%%%%%%%%%%%%%%%%%%%%%%%%%%%%%%%%%%%%%%%%%%%%%%%%%%%%%%%%%%%%%%%%%%%%%%%%%%

\section{Decoupling operators}\label{section:coupling}

We now recall the decoupling operators introduced in \cite{jossain}, as well as some of their properties proven there.
These operators are defined for random objects based on $\overline{\Om}$, see Section \ref{subsection:couplingsetting} below, but we will use them to deduce \emph{conditional estimates in the original probability space} $(\Om,\cF,\P)$.
These results are crucial in proving Theorem \ref{theorem:main}.

\subsection{Setting}\label{subsection:couplingsetting}

Recall the stochastic basis $(\Om,\ftn,\P,(\cF_t)_{t \in [0,T]})$ that was fixed in the beginning of Section \ref{section:preliminaries}.
Our fundamental random object is the Brownian motion $W=(W_t)_{t\in [0,T]}$, but for our decoupling technique we also need to have
a Brownian motion $W'$ that is independent of $W$. Thus we proceed as follows:

\begin{enumerate}[Step 1.]
 \item Fix another stochastic basis $(\Om',\ftn',\P',(\cF'_t)_{t \in [0,T]})$
{\st and a standard $d$-dimensional Brownian motion $W'=(W'_t)_{t\in [0,T]}$ that satisfy the same assumptions
as imposed on $(\Om,\ftn,\P,(\cF_t)_{t \in [0,T]},(W_t)_{t\in [0,T]})$ in Section \ref{section:preliminaries}.}

\item Let
\begin{equation*} \overline{\Omega} := \Omega\times\Omega', \quad
   \overline{\P} := \P\times \P', \quad
   \overline{\cF} := \overline{\cF\otimes\cF'}^{\overline{\P}}. \end{equation*}

\item Extend the Brownian motions $W$ and $W'$ canonically to $\overline{\Om}$, that is,
 \equa
   W(\om,\om') &:=& W(\om), \\
   W'(\om,\om') &:=& W'(\om').
 \tion
The augmented\footnote{Whenever we augment a filtration that is based on $\overline{\Om}$, we augment it by $\overline{\P}$-nullsets.}
natural filtration of the $2d$-dimensional Brownian motion $(W,W')$ is denoted by $\overline{\mathbb{F}}=(\overline{\cF}_t)_{t \in [0,T]}$.
\end{enumerate}

Hence, on the probability space $(\overline{\Om},\overline{\cF},\overline{\P})$, there are two independent
$d$-dimensional Brownian motions $W$ and $W'$.
Fix a Borel-measurable function $\vph:(0,T]\to [0,1]$. We define another standard $d$-dimensional Brownian motion on $(\overline{\Om},\overline{\cF},\overline{\P})$ by
  \begin{equation*} W^\vph_t: = \int_0^t \sqrt{1-\vph(u)^2} dW_u + \int_0^t \vph(u) dW_u', \quad t \in [0,T], \end{equation*}
and assume again continuity of all trajectories. The augmented natural filtration of $W^\vph$
is denoted by
$\mathbb{F}^\vph=(\ftn_t^\vph)_{t \in [0,T]}$
{\st and we obtain another stochastic basis
\[ (\overline{\Omega}, \cF_T^\varphi,\overline{\P},(\cF_t^\vph)_{t\in [0,T]}) \]
and can define, as in Notation \ref{notation},
\begin{equation}\label{eqn:definition_product_spaces}
   (\overline{\Om}_S,\Sigma^\varphi_S,\overline{\P}_S) :=
   \begin{cases}
   \left (\overline{\Om},  \cF_T^\varphi,\overline{\P}\right) & : S=0,\\
   \left ([0,T]\times \overline{\Om},{\mathcal B}([0,T])\otimes \cF_T^\varphi,\frac{\la}{T}\otimes\overline{\P}\right) & : S=T.
   \end{cases}
\end{equation}
Furthermore, we denote} the predictable $\sigma$-algebra on the stochastic basis
$(\overline{\Omega}, \cF_T^\varphi,\overline{\P},(\cF_t^\vph)_{t\in [0,T]})$ by $\cP^\vph$.
Denoting the function $\vph \equiv 0$ simply by $0$, we have that $W^0$ and (the extension of) $W$ are indistinguishable.
Since $\cF^0$ contains all $\overline{\P}$-nullsets, it follows that $(\cF^0_t)_{t \in [0,T]}$ and the augmentation of
$\sigma(W_r,r \in [0,t])_{t \in [0,T]}$ coincide.
Thus, we may agree to use $W^0$ for the extension of $W$, and similarly we use $W^1$ for the extension of $W'$.

\subsection{Decoupling operators}

Given a random variable $\xi$, whose randomness is given by $W$, we wish to define a random variable $\xi^\vph$ with the following two properties:
	\begin{itemize}
		\item[(1)] $\xi^\vph$ is a copy of $\xi$,
		\item[(2)] The randomness of $\xi^\vph$ is given by $W^\vph$.
	\end{itemize}
We accomplish this at the level of equivalence classes.
The fact that our procedure is well-defined is not proven here; all the proofs can be found in \cite{jossain}.
\begin{enumerate}[Step 1.]
 \item For $\xi \in \mathcal{L}_0(\Om,\cF,\P)$ take the canonical extension $\tilde \xi \in \mathcal{L}_0(\overline{\Om},\cF^0,\overline{\P})$,
       and let $[\xi] \in L_0(\overline{\Om},\cF^0,\overline{\P})$
       be the equivalence-class
       that contains all $\cF^0$-measurable random variables that are $\overline{\P}$-a.s. the same as $\tilde \xi$.

 \item We let $(h_k)_{k \in \N}$ be the ($L_2([0,T])$-normalized) Haar-functions on $[0,T]$, and denote by $W^0_{s,i}$ the $i$:th component of
       the Brownian motion $W^0$ for $i=1,\dots,d$.
       Now, letting $(g_n)_{n \in \mathbb{N}}:\overline{\Om}\to\R$ be the family of random variables $\int_0^T h_k(s) dW^0_{s,i}$
       where $i=1,\dots,d$ and $k \in \N$,       
       there exists a $\sigma(g_n,n\in\mathbb{N})$-measurable $\xi^0 \in [\xi]$.
  
 \item Defining $J:\overline{\Om} \to \R^{\mathbb{N}}, \quad J(\eta) = (g_n(\eta))_{n \in \mathbb{N}}$,
       there exists a random variable $\hat \xi:\R^{\mathbb{N}}\to\R$ such that $\xi^0$ can be factorized through $\R^\mathbb{N}$:
        \begin{equation*}
         \xi^0:\overline{\Om} \stackrel{J}{\to} \R^{\mathbb{N}} \stackrel{\hat \xi}{\to} \R.
        \end{equation*}

 \item Define $(g_n^\vph)_{n \in \N}$ analogously as
       $(g_n)_{n \in \N}$, using $W^\vph$ instead of $W^0$, and let $J^\vph:\overline{\Om} \to \R^{\mathbb{N}}$, 
       $J^\vph(\eta) =(g_n^\vph(\eta))_{n \in \mathbb{N}}$. Then it follows that $\hat\xi(J^\vph)$ is a
       well-defined $\sigma(g_n^\vph,n \in \mathbb{N})$-measurable random variable.

 \item Finally, we let $[\xi]^\vph \in L_0(\overline{\Om},\cF^\vph,\overline{\P})$ be the equivalence-class
       that contains all $\cF^\vph$-measurable random variables that are $\overline{\P}$-a.s. the same as $\hat\xi(J^\vph)$.
 \end{enumerate}

\begin{remark}\rm
$ $
\begin{itemize}
	\item[(1)] 
		Steps 2-5 yield the \emph{decoupling operator} $\C:L_0(\overline{\Om},\cF^0,\overline{\P}) \to L_0(\overline{\Om},\cF^\vph,\overline{\P})$ defined by
		\begin{equation*}
		\C([\xi]) = [\xi]^\vph.
		\end{equation*}
	In the following we will identify  $\xi$, $\tilde \xi$, and $[\xi]$, and denote all of them simply by $\xi$.
	Similarly, we will use the notation $\xi^\vph$ for both the equivalence class $[\xi]^\vph$, and any representative of it.
	
    \item[(2)]  The factorization and the approach used here is distributional,
    and does not require continuous paths or a gaussian distribution. As such, the approach might be useful also in other situations.
    
    \item[(3)] We can define $X^\vph$ for $X \in \cL_0(\Om_T,\Sigma_T,\P_T)$ analogously as above. The idea is that we change the randomness, but leave the time component unchanged.
    The point of defining this separately is to emphasize that $X^\vph \in L_0(\overline{\Om}_T,\Sigma_T^\vph,\overline{\P}_T)$, i.e. that representatives of $X^\vph$ are jointly measurable.
       
    \item[(4)] Our approach preserves continuity:
        Assume that $\M$ is locally $\sigma$-compact, $S \in \{0,T\}$, and \linebreak
    $f \in \cL_0(\Om_S,\Sigma_S,\P_S;C(\M))$. Then we may define $f^\vph \in L_0(\overline{\Om}_S,\Sigma_S^\vph,\overline{\P}_S;C(\M))$ by
    taking the continuous modification\footnote{Existence of such modification was proven in \cite[Proposition A.1]{jossain}.}
    of $(f(x)^\vph)_{x \in \M}$.
   \end{itemize}

\end{remark}

\subsection{Basic properties}

Predictability and adaptedness are transferred in the following sense:

\begin{proposition}[{\cite[Lemma 3.1 and {\st Proposition 2.12}]{jossain}}]
\label{proposition:measurability_preserved}
Let $\M$ locally $\sigma$-compact. Then the following holds true:
\begin{enumerate}[{\rm (i)}]
 \item If $\xi \in \mathcal{L}_0(\Om,\cF_t,\P)$ for some $t \in [0,T]$,
       then all representatives of 
       $\xi^\vph \in L_0(\overline{\Om},\cF^\vph,\overline{\P})$ are $\cF_t^\vph$-measurable.
 \item If $f \in \mathcal{L}_0(\Om_T,\mathcal{P},\P_T;C(\M))$\footnote{This means that $\eta \mapsto f(\eta,x)$ is $\mathcal{P}$-measurable for all $x \in \M$.},
       then there is a $\mathcal{P}^\vph$-measurable\footnote{This means that $\eta \mapsto f^\vph(\eta,x)$ is $\mathcal{P}^\vph$-measurable for all $x \in \M$.}
       representative of 
       \[ f^\vph \in L_0(\overline{\Om}_T,\Sigma_T^\vph,\overline{\P}_T;C(\M)). \]

 \item If $Y \in \mathcal{L}_0(\Om,\cF,\P;C([0,T]))$ is $(\cF_t)_{t \in [0,T]}$-adapted,
       then all representatives of 
       \[ Y^\vph \in L_0(\overline{\Om},\cF^\vph,\overline{\P};C([0,T])) \]
       are $(\cF_t^\vph)_{t \in [0,T]}$-adapted.
\end{enumerate}
\end{proposition}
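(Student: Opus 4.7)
Since these assertions are recapped from \cite{jossain}, the plan is to sketch the mechanism rather than redo bookkeeping. Everything rests on the Haar-wavelet structure of the generators $(g_n)_{n \in \mathbb{N}}$: each $g_n$ is a finite difference of $W^0$ associated to a dyadic sub-interval $I_n \subseteq [0,T]$. Partition the index set as $\mathbb{N} = \mathbb{N}_t^- \cup \mathbb{N}_t^+$ depending on whether $I_n \subseteq [0,t]$ or $I_n \cap (t,T] \neq \emptyset$; the crucial observation is that $g_n^\vph$ is $\cF_t^\vph$-measurable precisely when $n \in \mathbb{N}_t^-$, because for such $n$ the value $g_n^\vph$ is built only from increments of $W^\vph$ on $[0,t]$.

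For (i), start from a $\cF_t$-measurable $\xi$ and its canonical extension to $\overline{\Om}$. Since $\sigma(g_n : n \in \mathbb{N}_t^-)$ agrees with the augmentation of $\sigma(W_s : s \le t)$ modulo $\overline{\P}$-nullsets, one can choose the representative $\xi^0$ to be $\sigma(g_n : n \in \mathbb{N}_t^-)$-measurable. Doob--Dynkin applied to this countable family produces a Borel map $\hat\xi$ depending only on the pre-$t$ coordinates in $\R^{\mathbb{N}}$, so that $\xi^\vph = \hat\xi(J^\vph)$ is $\cF_t^\vph$-measurable. Uniqueness of the equivalence-class $\xi^\vph$ then promotes this to a statement about \emph{every} representative.

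For (ii), reduce by a monotone class argument to simple predictable processes with continuous spatial values, i.e.\ $f = \sum_{i=1}^m \xi_i \phi_i 1_{(s_i,t_i]}$ with $\xi_i$ being $\cF_{s_i}$-measurable and $\phi_i \in C(\M)$. Part (i) gives that $\xi_i^\vph$ is $\cF_{s_i}^\vph$-measurable, so the resulting $f^\vph$ is $\mathcal{P}^\vph$-measurable. For general $f$, exploit that a continuous function on $\M$ is determined by its values on the dense countable set $\A$: approximate $f(\cdot, a)$ for each $a \in \A$ by simple predictable processes, couple, and reassemble via continuity. Part (iii) then follows by applying (i) at each $t \in \Q \cap [0,T]$ and extending adaptedness to all $t \in [0,T]$ via the path-continuity of $Y^\vph$ combined with the usual conditions on $\mathbb{F}^\vph$.

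The main obstacle is part (ii): one needs a $\mathcal{P}^\vph$-measurable \emph{and} spatially continuous representative of $f^\vph$ simultaneously. A naive pointwise Doob--Dynkin construction yields only an $\R^{\M}$-valued object for each $x \in \M$ separately, with no reason for the joint map to lie in $C(\M)$. Bridging this gap is precisely the role flagged by Proposition \ref{proposition:(ii)} in the text, which supplies the continuous-in-$\M$ modification that is then glued to the predictability obtained from (i) on the dense set $\A$.
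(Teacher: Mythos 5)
The paper itself does not prove this proposition: it is quoted directly from \cite[Lemma 3.1 and Theorem 2.8]{jossain} and used as a black box, so there is no proof here against which your argument can be checked.

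Your sketch is a reasonable reconstruction, but the pivotal claim driving part (i) --- that $\sigma(g_n : n \in \mathbb{N}_t^-)$, the sigma-algebra generated by the Haar coordinates with $I_n \subseteq [0,t]$, coincides with $\cF_t$ up to $\overline{\P}$-nullsets --- depends delicately on exactly what the family $(g_n)$ is, and you do not pin this down. If the $g_n$ are Haar wavelet coefficients of $W^0$ in the sense of L\'evy--Ciesielski, the coarsest generators (including $W^0_T$ itself and the low-frequency wavelets) have supports straddling any $t<T$; they fall into $\mathbb{N}_t^+$, and without them $\sigma(g_n : I_n \subseteq [0,t])$ only recovers increments between dyadic points inside $[0,t]$, not the values $W^0_s$ for $s\le t$. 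If instead the $g_n$ are the dyadic increments $W^0_{(j+1)T2^{-k}} - W^0_{jT2^{-k}}$, then your generation claim is fine, but the family $J=(g_n)_n$ is highly redundant and the step ``factorize $\xi^0$ through $\R^{\mathbb{N}}$ via Doob--Dynkin and ignore the post-$t$ coordinates'' needs re-examination, since $J$ is no longer a clean coordinatization. Either reading requires more care than your sketch provides, and this is precisely the content being delegated to \cite{jossain}. Parts (ii) and (iii) inherit whatever shape (i) takes; your remark that the continuous-in-$\M$ modification (the ``relative'' of Proposition~\ref{proposition:(ii)} that the paper flags in a footnote) is what bridges pointwise predictability on the dense set $\A$ to a jointly $\cP^\vph$-measurable $C(\M)$-valued representative is correct and is the real content of (ii).
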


We summarize some further properties proven in \cite{jossain}:

\begin{proposition}[{\cite[{\st Propositions 2.5, 2.13}, and Lemma 3.2]{jossain}}]
\label{proposition:coupling_properties}
Let {\st $N \ge 1$}, $S \in \{0,T\}$, $X,X_1,\dots,X_N \in \mathcal{L}_0(\Om_S,\Sigma_S,\P_S)$,
$Y \in \mathcal{L}_1(\Om_T,\Sigma_T,\P_T)$,
$g:\R^N \to \R$ be a Borel function, $f \in \mathcal{L}_0(\Om_S,\Sigma_S,\P_S;C(\R^N))$, and 
$Z \in \mathcal{L}_2(\Om_T,\cP,\P_T)$. Then the following holds true:
\begin{enumerate}[{\rm (i)}]
 \item $X \stackrel{d}{=} X^\vph$.
 \item $(g(X_1,\dots,X_N))^\vph = g(X_1^\vph,\dots,X_N^\vph)$.
 \item $(f(X_1,\dots,X_N))^\vph = f^\vph(X_1^\vph,\dots,X_N^\vph)$.
 \item $\left(\int_0^T Y(t)1_{\{\int_0^T |Y(s)|ds<\infty\}} dt\right)^\vph = \int_0^T Y^\vph(t)1_{\{\int_0^T |Y^\vph(s)|ds<\infty\}} dt$.
 \item $\left(\int_0^T Z(t) dW_t\right)^\vph = \int_0^T Z^\vph(t) dW_t^\vph$ for any predictable representative of
       $Z^\vph$.\footnote{By Proposition \ref{proposition:measurability_preserved}(ii) there exists such a representative.} 
 \item Let $X \in \cL_0(\Om_T,\Sigma_T,\P_T)$ and $Y \in \cL_0(\overline{\Om}_T,\Sigma_T^\vph,\overline{\P}_T)$. If there is a null-set
      $\cN \subseteq [0,T]$ with $Y(t) \in X(t)^\vph$ for all $t\in [0,T]\setminus \cN$, then $Y \in X^\vph$.
\end{enumerate}
\end{proposition}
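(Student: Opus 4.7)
The plan is to prove the items in the stated order, reducing everything, via the factorization through the Haar coefficients of $W^0$, to distributional identities that compare $W^0$ and $W^\vph$.

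For (i), I would start from the factorization $X = \hat{X} \circ J$ where $\hat{X}:\R^\N\to\R$ is Borel and $J = (g_n)_{n \in \N}$ collects the Haar finite-differences of $W^0$. By construction $X^\vph = \hat{X} \circ J^\vph$, and since $W^0$ and $W^\vph$ are both standard $d$-dimensional Brownian motions, the vectors $J$ and $J^\vph$ are each i.i.d.\ standard Gaussian, so $J \stackrel{d}{=} J^\vph$; pushing forward by $\hat{X}$ gives (i). For (ii), write each $X_i = \hat{X}_i \circ J$ and observe that $g(X_1,\dots,X_N) = (g \circ (\hat{X}_1,\dots,\hat{X}_N)) \circ J$, which is itself a factorization through $J$ with Borel kernel $g \circ (\hat{X}_1,\dots,\hat{X}_N)$; applying the uniqueness of $\cdot^\vph$ gives $(g(X_1,\dots,X_N))^\vph = (g \circ (\hat{X}_1,\dots,\hat{X}_N)) \circ J^\vph = g(X_1^\vph,\dots,X_N^\vph)$. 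For (iii), the identity of (ii) applies pointwise in $x \in \M$; picking a jointly continuous representative of $f^\vph$ and using that both sides agree on the dense countable set $\A \subseteq \M$, continuity extends the equality to all $x$.

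For (iv), I would first approximate $Y$ by bounded predictable processes $Y_n$ and then by predictable step processes; for a step process the Lebesgue integral becomes a finite Riemann sum in which (ii) is directly applicable. Passage to the limit uses (i) applied to the $\cL_1$-norm functional of $Y-Y_n$, which transfers $\cL_1$-convergence to $\cL_1$-convergence of the coupled objects. The indicator $1_{\{\int_0^T|Y(s)|ds<\infty\}}$ is only needed to define the integral as a genuine random variable, and compatibility with $\cdot^\vph$ follows from (ii) applied to each partial sum.

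The main obstacle is (v). My plan is the standard two-step approximation. First, for a simple predictable integrand $Z(t) = \sum_{k=1}^N \eta_k 1_{(t_{k-1},t_k]}(t)$ with $\eta_k \in \cL_2(\Om,\cF_{t_{k-1}},\P)$, the stochastic integral is the finite sum $\sum_k \eta_k (W_{t_k}-W_{t_{k-1}})$; Proposition \ref{proposition:measurability_preserved}(i) guarantees $\eta_k^\vph$ is $\cF_{t_{k-1}}^\vph$-measurable, and repeated use of (ii) (with $g$ bilinear) together with the identification $(W_t)^\vph = W_t^\vph$ yields the claim. For general $Z \in \cL_2(\Om_T,\cP,\P_T)$, choose predictable simple $Z_n$ with $\|Z-Z_n\|_{\cL_2(\Om_T)} \to 0$. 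By (i) applied to the $\cL_2$-functional of $Z-Z_n$, we get $\|Z^\vph-Z_n^\vph\|_{\cL_2(\Om_T^\vph)} \to 0$, so by Ito isometry under $W^\vph$ the integrals $\int_0^T Z_n^\vph dW^\vph$ converge in $\cL_2(\overline{\Om})$; on the other side, $(\int_0^T Z_n dW)^\vph \to (\int_0^T Z dW)^\vph$ in probability (the coupling operator preserves $\cL_2$-limits by (i)), so the two limits coincide. The delicate point is ensuring that the predictable representative of $Z^\vph$ guaranteed by Proposition \ref{proposition:measurability_preserved}(ii) is the one that makes $\int_0^T Z_n^\vph dW^\vph$ well defined simultaneously for all $n$; this is handled by choosing, once and for all, such a representative and noting that the step approximations can be built from it directly.

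For (vi), the statement is essentially a measurability-and-null-set bookkeeping result: both $Y$ and $X^\vph$ live in $L_0(\overline{\Om}_T,\Sigma_T^\vph,\overline{\P}_T)$ under the identification $\P_T = \frac{\la}{T}\otimes\P$, and Fubini combined with the hypothesis that $Y(t) = X(t)^\vph$ for $t \notin \cN$ (with $\lambda(\cN)=0$) shows that $Y$ and $X^\vph$ coincide outside a $\P_T$-null set, hence as equivalence classes.
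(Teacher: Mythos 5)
This proposition is stated in the paper with a citation to \cite[Theorems 2.6, 2.11, and Lemma 3.2]{jossain}; the present paper contains \emph{no} proof of it, so there is no internal argument for you to match. What follows is therefore an assessment of your sketch on its own terms, not a comparison.

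Your overall strategy---factorize everything through the Haar coefficients $J$ of $W^0$, reduce (i) and (ii) to the distributional identity $J \stackrel{d}{=} J^\vph$ and to the uniqueness/well-definedness of the coupling operator, approximate for (iv) and (v), and use density plus continuity for (iii)---is the natural one and is almost certainly what \cite{jossain} does. A few points would need tightening. In (iii) the assertion that ``the identity of (ii) applies pointwise in $x$'' skips the genuine difficulty: (ii) handles a \emph{deterministic} Borel $g$, whereas here $f$ is itself random, and you are evaluating it at the random point $(X_1,\dots,X_N)$; one actually has to discretize the arguments (e.g.\ approximate $X_i$ by $\A$-valued random variables), apply (ii) to each fixed value, and then let continuity and the construction of $f^\vph$ as the continuous modification of $(f(x)^\vph)_x$ close the gap. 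In (iv) the word ``predictable'' is a red herring: $Y$ is merely $\Sigma_T$-measurable and the integral is a Lebesgue integral, so what you need are $\Sigma_T$-measurable step processes, and one must also check explicitly that the indicator $1_{\{\int_0^T |Y(s)|ds<\infty\}}$ transforms into $1_{\{\int_0^T|Y^\vph(s)|ds<\infty\}}$, which is not automatic from (ii) alone but does follow from the approximation together with (i). In (v) you correctly flag the subtlety about fixing a predictable representative of $Z^\vph$; note that the step from ``$Z\stackrel{d}{=}Z^\vph$'' to ``$\|Z^\vph-Z_n^\vph\|_{\cL_2}\to 0$'' additionally uses the \emph{linearity} of the coupling operator (which itself is a consequence of (ii), so the ordering of the argument matters). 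Finally (vi) is indeed just Fubini plus the fact that, up to nullsets, the coupling of the process agrees slice-by-slice with the coupling of each $X(t)$; you are implicitly using this compatibility and it would be worth stating. None of these are fatal, but they are the places where the cited proofs in \cite{jossain} do real work that your sketch currently elides.
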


Our next result can be interpreted as follows: if $(Y,Z)$ is a solution of an SDE, then $(Y^\vph,Z^\vph)$ is a solution of another SDE.
Note that we do not assume the SDEs to have unique solutions, we only assume that $(Y,Z)$ satisfies the equation.

\begin{proposition}[{\cite[Theorem 3.3]{jossain}}]
\label{proposition:coupling_SDE}
Assume that $f,g_i \in \mathcal{L}_0(\Om_T,\mathcal{P},\P_T;C(\R^{1+d}))$, $Z_i \in \cL_0(\Om_T,\mathcal{P},\P_T)$, $i=1,\dots,d$,
that $Y \in \cL_0(\Om,\cF,\P;C([0,T]))$ is $(\cF_t)_{t \in [0,T]}$-adapted, and that
 \begin{equation*}
  \E \left[ \int_0^T |f(r,Y_r,Z_r)|dr + \int_0^T |g(r,Y_r,Z_r)|^2 dr \right] < \infty.
 \end{equation*}
Furthermore, assume that $\xi \in \mathcal{L}_0(\Om,\cF,\P)$, and that equation
 \begin{equation}\label{SDE_to_change}
  Y_u = \xi + \int_u^T f(r,Y_r,Z_r)dr - \int_u^T g(r,Y_r,Z_r) dW_r, \quad u \in [0,T],
 \end{equation}
holds $\P$-almost surely.
If we fix any predictable representatives of $f^\vph,g_i^\vph,Z_i^\vph$, and an $(\cF_t^\vph)_{t \in [0,T]}$-adapted (continuous)
representative of $Y^{\vph}$, we have
 \begin{equation*}
  \E \left[ \int_0^T |f^\vph(r,Y_r^\vph,Z_r^\vph)|dr + \int_0^T |g^\vph(r,Y_r^\vph,Z_r^\vph)|^2 dr \right] < \infty,
 \end{equation*}
and we have that the equation
 \begin{equation}\label{SDE_changed}
  Y_u^\vph = \xi^\vph + \int_u^T f^\vph(r,Y_r^\vph,Z_r^\vph)dr - \int_u^T g^\vph(r,Y_r^\vph,Z_r^\vph) dW_r^\vph, u \in [0,T],
 \end{equation}
holds $\overline{\P}$-almost surely.
\end{proposition}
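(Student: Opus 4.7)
The plan is to apply the coupling operator $^\vph$ term-by-term to equation \eqref{SDE_to_change} at each fixed $u \in [0,T]$, and then to upgrade the resulting identity from pointwise-in-$u$ to joint-in-$u$ by pathwise continuity. Algebraically, the proof is essentially a line-by-line invocation of Proposition \ref{proposition:coupling_properties}: part~(i) transfers integrability in distribution, parts~(ii)--(iii) handle nonlinear continuous composition with $f^\vph$ and $g^\vph$, part~(iv) handles the Lebesgue integral, and part~(v) handles the It\^o integral.

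First I would verify the integrability assertion. Setting $F := \int_0^T |f(r,Y_r,Z_r)|dr$, which by hypothesis is $\P$-a.s.\ finite with finite expectation, Proposition \ref{proposition:coupling_properties}(iii) applied to the integrand and then (iv) applied to the resulting nonnegative process yield $F^\vph = \int_0^T |f^\vph(r,Y_r^\vph,Z_r^\vph)|dr$ after choosing predictable representatives (supplied by Proposition \ref{proposition:measurability_preserved}(ii)). Part~(i) then gives $F^\vph \stackrel{d}{=} F$, hence $\E F^\vph < \infty$. The same argument with $|g|^2$ in place of $|f|$ controls the quadratic term.

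Next, fix $u \in [0,T]$. Equation \eqref{SDE_to_change} is an equality of $\cF$-measurable random variables, so applying $^\vph$ to both sides and using Corollary \ref{corollary:C_linear} together with part~(ii) of Proposition \ref{proposition:coupling_properties} gives
\[
 Y_u^\vph = \xi^\vph + \left(\int_u^T f(r,Y_r,Z_r)dr\right)^\vph - \left(\int_u^T g(r,Y_r,Z_r)dW_r\right)^\vph \quad \overline{\P}\text{-a.s.}
\]
Rewriting the Lebesgue integral as $\int_0^T 1_{[u,T]}(r)\, f(r,Y_r,Z_r)\,dr$ (and similarly the stochastic integral), parts~(iii)--(iv) identify the first term on the right with $\int_u^T f^\vph(r,Y_r^\vph,Z_r^\vph)dr$, and parts~(iii) and~(v) identify the second with $\int_u^T g^\vph(r,Y_r^\vph,Z_r^\vph)dW_r^\vph$.

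Running the previous step for each $u \in \Q \cap [0,T]$ and intersecting the countably many exceptional null sets produces a single set of full $\overline{\P}$-measure on which \eqref{SDE_changed} holds for every rational $u$. Both sides are continuous in $u$ there (the left side by the adapted continuous representative supplied by Proposition \ref{proposition:measurability_preserved}(iii); the right side because the Lebesgue integral depends continuously on its lower limit and the It\^o integral against $W^\vph$ admits a continuous modification), so the identity extends to all $u \in [0,T]$. The principal obstacle I expect is representative bookkeeping: $^\vph$ naturally returns equivalence classes in $L_0$, yet \eqref{SDE_changed} requires a literal process-level identity. This is handled by combining Propositions \ref{proposition:measurability_preserved}(ii)--(iii) with Proposition \ref{proposition:coupling_properties}(vi), which together let one promote an a.e.-in-$t$ identity of integrands to the process-level equality needed inside the stochastic and Lebesgue integrals.
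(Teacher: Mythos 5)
This proposition is not proved in the paper; it is imported verbatim from the companion paper, cited as \cite[Theorem 3.3]{jossain}, and no proof environment follows. There is therefore no in-paper argument to compare against, but your reconstruction takes the natural route and is consistent with the toolkit reproduced in Section~\ref{section:coupling}.

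One ingredient you invoke silently would, in a self-contained write-up, need its own justification. You identify $(Y_u)^\vph$, the decoupling of the real-valued random variable $Y_u$, with $Y_u^\vph$, the time-$u$ evaluation of the adapted continuous $C([0,T])$-valued representative of $Y^\vph$ supplied by Proposition~\ref{proposition:measurability_preserved}(iii). None of the enumerated parts of Proposition~\ref{proposition:coupling_properties} reproduced here delivers this consistency: part~(ii) covers Borel maps $g:\R^N\to\R$, not evaluation maps $C([0,T])\to\R$. A similar, milder remark applies to your implicit use of the fact that the deterministic indicator $1_{[u,T]}(r)$ is unaffected by the decoupling operation. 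Both facts are true and are part of what the cited reference establishes, but relative to what the present paper makes explicitly available they are gaps. With these granted, your term-by-term use of Proposition~\ref{proposition:coupling_properties}(iii)--(v) and Corollary~\ref{corollary:C_linear} to obtain the identity for each fixed $u$, followed by passage from rational $u$ to all $u$ via a fixed continuous version of the stochastic integral, is sound, and the representative bookkeeping via Proposition~\ref{proposition:coupling_properties}(vi) and Proposition~\ref{proposition:measurability_preserved}(ii)--(iii) is handled in the appropriate way.
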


\subsection{Conditional results}
\label{sec:conditional_results}

From now on we will exclusively use functions $\vph$ of the form
 \begin{equation*}
  \chi_{(s,t]}:(0,T]\to[0,1], \quad \chi_{(s,t]}(r) =
   \begin{cases}
    1 , \mbox{ if } r \in (s,t], \\
    0 , \mbox{ if } r \not \in (s,t],
   \end{cases}
 \end{equation*}
where $0 \le s < t \le T$. 
\footnote{\st For a picture of the different Brownian motions $W$, $W'$, and $W^{(s,t]}$ see Figure 1.}
\begin{figure}[h]\label{figure1}
	\includegraphics[scale=0.5]{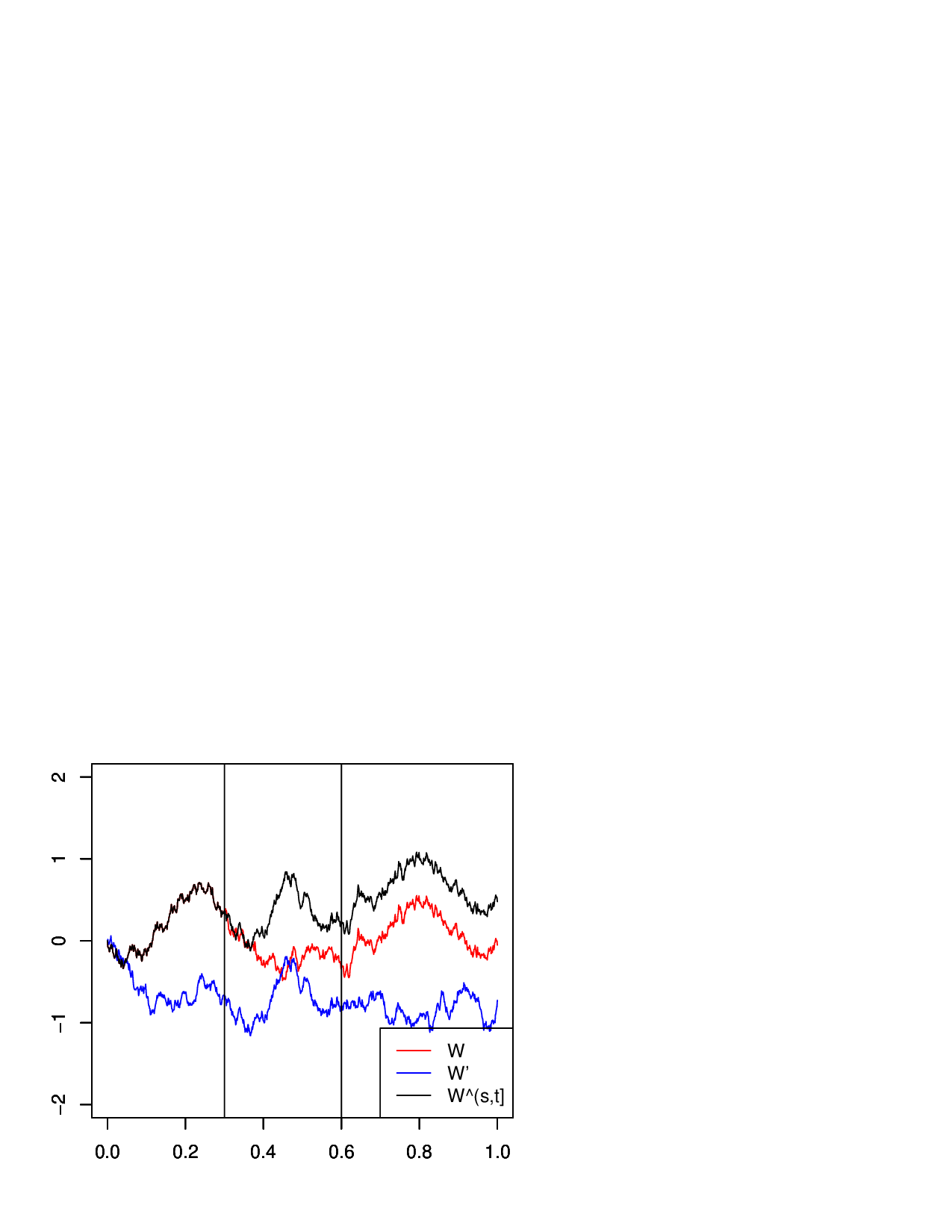}\vspace*{-2em}
	\caption{Brownian motions $W$,$W'$ and $W^{(s,t]}$. Here $s=0.3$, $t=0.6$ and $T=1$.}
\end{figure}
To keep the notation light, {\st we let 
\begin{equation}\label{eqn:xiab}
X^{(s,t]}:=X^{\chi_{(s,t]}}.
\end{equation}}
Recall that the random object $X^{(s,t]}$ is obtained by changing the underlying Brownian motion $W^0$ to an independent one on the interval $(s,t]$.
If $X$ is independent of $\sigma(W_r^0-W_s^0,r \in (s,t])$,
we ought to have $X^{(s,t]}=X$. Precisely in what sense this holds, is answered by the following proposition:

\begin{proposition}\label{lemma:adapted_extended}
Let $0 \le s < t \le T$, and define the sigma-algebras
 \begin{equation}\label{eqn:sigma_algebras_bar}
  \overline{\cG_s^t} := \sigma(W^0_r,r\in[0,s]) \vee \sigma(W^0_r-W^0_t, r\in[t,T])\vee\overline{\mathcal{N}} \sptext{1}{and}{1} 
  \overline{\cH_s^t} := \mathcal{B}([0,T]) \otimes \overline{\cG_s^t},
 \end{equation}
where $\overline{\mathcal{N}}$ are the $\overline{\P}$-nullsets.
Then
 \begin{enumerate}[{\rm (i)}]
  \item  $\E^{\overline{\cH_s^t}} X = \E^{\overline{\cH_s^t}} X^{(s,t]}$ for any $X \in L_1(\overline{\Om}_T,\Sigma_T^0,\overline{\P}_T)$,
  \item  $\E^{\overline{\cG_s^t}} \alpha = \E^{\overline{\cG_s^t}} \alpha^{(s,t]}$ for any $\alpha \in L_1(\overline{\Om},\cF^0,\overline{\P})$,
  \item  $\alpha = \alpha^{(s,t]}$ for any $\alpha \in L_0(\overline{\Om},\overline{\cG_s^t},\overline{\P})$,
  \item  $X \in X^{(s,t]}$ for any $X \in \mathcal{L}_0(\overline{\Om}_T,\overline{\cH_s^t},\overline{\P}_T)$,
  \item  $f \in f^{(s,t]}$ for any $f \in \mathcal{L}_0(\overline{\Om}_T,\overline{\cH_s^t},\overline{\P}_T;C(\M))$.
 \end{enumerate}
\end{proposition}
\begin{proof}
First of all we note that
 \equa
  \overline{\cG_s^t} &=& \sigma(W_r^{(s,t]}, r \in [0,s])
                   \vee \sigma(W_r^{(s,t]}-W_t^{(s,t]}, r \in [t,T]) \vee \overline{\mathcal{N}}. 
 \tion
Hence, similarly as in Proposition \ref{proposition:measurability_preserved}(i) (i.e. {\cite[Lemma 3.1]{jossain}}), we have that
if $\alpha \in \mathcal{L}_0(\overline{\Om},\overline{\cG_s^t},\overline{\P})$, then all representatives of $\alpha^{(s,t]}$
are $\overline{\cG_s^t}$-measurable.
\smallskip

\textbf{(i)} We need to prove that
 \begin{equation*} \int_A X d\overline{\P}_T = \int_A X^{(s,t]} d\overline{\P}_T \end{equation*}
for all $A \in \overline{\cH_s^t}$. By linearity of the decoupling operator
we may assume that $X\ge 0$, and it is enough to consider a generating $\pi$-system, so that we assume $A$ to be of the form
 \begin{equation*}
  (r,\widehat{W}) := (r,W^0_{s_1},\dots,W^0_{s_n},W^0_{t_1}-W^0_t,\dots,W^0_{t_m}-W^0_t) \in B_1 \times B_2,
 \end{equation*}
where $n,m \in \N$, $0 \le s_i \le s < t \le t_j \le T$, $0 \le r \le T$, and $B_1 \in \mathcal{B}([0,T])$, $B_2 \in \mathcal{B}(\R^{(n+m)d})$.
Letting $Y(r,\om) := \chi_{B_1\times B_2}(r,\widehat{W}(\om))$ we have $Y \in \mathcal{L}_\infty(\overline{\Om}_T,\overline{\cH_s^t},\overline{\P}_T)$,
and $Y \in Y^{(s,t]}$ because of Proposition \ref{proposition:coupling_properties}(ii) and (vi).
Thus, again using Proposition \ref{proposition:coupling_properties},
 \equa
  \int_A X d\overline{\P}_T
&=& \int_{\overline{\Om}_T} XY d\overline{\P}_T
= \int_{\overline{\Om}_T} (XY)^{(s,t]} d\overline{\P}_T
= \int_{\overline{\Om}_T} X^{(s,t]} Y^{(s,t]} d\overline{\P}_T \\
&=& \int_{\overline{\Om}_T} X^{(s,t]} Y d\overline{\P}_T
= \int_A X^{(s,t]} d\overline{\P}_T.
 \tion

\textbf{(ii)} Can be shown similarly as \textbf{(i)}.

\smallskip

\textbf{(iii)} First let $\alpha \in L_1(\overline{\Om},\overline{\cG_s^t},\overline{\P})$. Then we have that
$\E^{\overline{\cG_s^t}} \alpha^{(s,t]} = \alpha^{(s,t]}$,
but from \textbf{(ii)} we have that $\E^{\overline{\cG_s^t}} \alpha^{(s,t]} = \alpha$ as well.
For $\alpha \in L_0(\overline{\Om},\overline{\cG_s^t},\overline{\P})$
the claim follows from the fact that for all $N \in \mathbb{N}$
 \begin{equation*}
  (N\wedge\alpha\vee(-N))^{(s,t]} = N\wedge\alpha^{(s,t]}\vee(-N).
 \end{equation*}

\textbf{(iv)} If $X \in \mathcal{L}_0(\overline{\Om}_T,\overline{\cH_s^t},\overline{\P}_T)$,
then by Fubini's theorem $X(r) \in \mathcal{L}_0(\overline{\Om},\overline{\cG_s^t},\overline{\P})$ for all $r \in [0,T]$, so that \textbf{(iii)} implies that $X(r) \in X(r)^{(s,t]}$ for all $r \in [0,T]$.
Since $\overline{\cH_s^t} \subseteq \Sigma_T^{(s,t]}$, we have that $X \in \cL_0(\overline{\Om}_T,\Sigma_T^{(s,t]},\overline{\P}_T)$
so that the claim follows from Proposition \ref{proposition:coupling_properties}(vi).
\smallskip

\textbf{(v)}
Follows directly from \textbf{(iv)} and the definition of $f^{(s,t]}$.
\end{proof}
\medskip

We want to deduce conditional estimates for random variables based on the probability space $(\Om,\cF,\P)$ from estimates 
obtained using the decoupling operators.
{\st Recall, that 
$\cF^0$ and $\cF_s^0$ were defined in Section \ref{subsection:couplingsetting},
$\overline{\cG_s^t}$ and $\overline{\cG_s^t}$ by equations \eqref{eqn:sigma_algebras_bar}, 
$\cG_s^t$ and $\cG_s^t$ by \eqref{eqn:sigma_algebras},
and $\xi^{(a,b]}$ by \eqref{eqn:xiab}. The} following result is vital:

\begin{lemma}\label{lemma:conditional_equivalence}
Let {\st $p\in [1,\infty)$}, $0 \le s < t \le T$, and $\xi \in \mathcal{L}_p(\overline{\Om},\cF^0,\overline{\P})$. Then
 \begin{equation}\label{equation:conditional_coupling}
  \frac{1}{2^p} \E^{\overline{\cG_s^t}} |\xi-\xi^{(s,t]}|^p
\le  \E^{\overline{\cG_s^t}} |\xi-\E^{\overline{\cG_s^t}} \xi|^p
\le  \E^{\overline{\cG_s^t}} |\xi-\xi^{(s,t]}|^p.
 \end{equation}
\end{lemma}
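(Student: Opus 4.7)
The approach is to reduce both inequalities to two conditional-distribution properties of the pair $(\xi,\xi^{(s,t]})$ relative to $\overline{\cG_s^t}$:
\begin{enumerate}[{\rm (i)}]
\item they have the same conditional distribution given $\overline{\cG_s^t}$, and in particular $\mu:=\E^{\overline{\cG_s^t}}\xi = \E^{\overline{\cG_s^t}}\xi^{(s,t]}$;
\item they are conditionally independent given $\overline{\cG_s^t}$.
\end{enumerate}
Heuristically both are clear: conditionally on $\overline{\cG_s^t}$, the only remaining randomness in $\xi$ sits in the increments $(W^0_r-W^0_s)_{r\in (s,t]}$, while the only remaining randomness in $\xi^{(s,t]}$ sits in the independent Brownian increments $(W^1_r-W^1_s)_{r\in (s,t]}$.

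I would derive (i) from Lemma \ref{lemma:adapted_extended}(ii): for any bounded Borel $g:\R\to\R$ that lemma applied to $g(\xi)$ gives $\E^{\overline{\cG_s^t}}g(\xi)=\E^{\overline{\cG_s^t}}(g(\xi))^{(s,t]}$, and Proposition \ref{proposition:coupling_properties}(ii) rewrites the right-hand side as $\E^{\overline{\cG_s^t}}g(\xi^{(s,t]})$. Statement (ii) is the more delicate point and requires going back to the Haar-factorization construction: $\xi$ is realised as a Borel function of countably many Haar coefficients of $W^0$, those sitting outside $(s,t]$ being $\overline{\cG_s^t}$-measurable and common to $\xi$ and $\xi^{(s,t]}$, whereas the coefficients inside $(s,t]$ come from $W^0$ for $\xi$ and from the independent $W^1$ for $\xi^{(s,t]}$; hence, conditionally on $\overline{\cG_s^t}$, the pair $(\xi,\xi^{(s,t]})$ is a measurable function of two independent random sequences, from which (ii) is read off.

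Granting (i)--(ii), the two bounds follow by standard moves. For the upper bound, set $\mathcal{H}:=\overline{\cG_s^t}\vee\sigma(\xi)$. By (ii), $\E[\xi^{(s,t]} \mitt \mathcal{H}] = \E[\xi^{(s,t]} \mitt \overline{\cG_s^t}] = \mu$, and conditional Jensen applied to the convex function $|\cdot|^p$ (for $p\ge 1$) yields
\[
 |\xi-\mu|^p = \bigl|\E[\xi-\xi^{(s,t]} \mitt \mathcal{H}]\bigr|^p \le \E\bigl[|\xi-\xi^{(s,t]}|^p \bigm| \mathcal{H}\bigr];
\]
taking $\E^{\overline{\cG_s^t}}$ gives the right-hand inequality of \eqref{equation:conditional_coupling}. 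For the lower bound, the triangle inequality together with $(a+b)^p\le 2^{p-1}(a^p+b^p)$ for $a,b\ge 0$ give
\[
 |\xi-\xi^{(s,t]}|^p \le 2^{p-1}\bigl(|\xi-\mu|^p + |\xi^{(s,t]}-\mu|^p\bigr);
\]
taking $\E^{\overline{\cG_s^t}}$ and using (i) with $g(x)=|x-\mu|^p$ (valid since $\mu$ is $\overline{\cG_s^t}$-measurable) to see that the two resulting terms coincide, I obtain $\E^{\overline{\cG_s^t}}|\xi-\xi^{(s,t]}|^p \le 2^p\,\E^{\overline{\cG_s^t}}|\xi-\mu|^p$, which is the left-hand inequality.

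The main obstacle is a clean verification of (ii), since the conditional independence is not immediately supplied by the conditional results already established (Lemma \ref{lemma:sigma-algebra_stable} and Lemma \ref{lemma:adapted_extended}); it has to be traced back to the explicit splitting of the Haar coefficients of $W^{(s,t]}$ into $\overline{\cG_s^t}$-measurable pieces and independent $W^1$-pieces. By contrast (i) is obtained essentially for free from the existing machinery, and the two resulting bounds are then reached by routine use of conditional Jensen and the elementary power inequality.
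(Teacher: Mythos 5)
Your proof is correct in its conclusions but takes a genuinely different route from the paper. The paper simply localizes the \emph{unconditional} two-sided inequality $\tfrac12\|X-X^{(s,t]}\|_p \le \|X-\E^{\overline{\cG_s^t}}X\|_p \le \|X-X^{(s,t]}\|_p$ from \cite[Lemma 4.20]{jossain}: for $A\in\overline{\cG_s^t}$ it applies this to $X=\xi\chi_A$, using $(\xi\chi_A)^{(s,t]}=\chi_A\xi^{(s,t]}$ (Proposition \ref{proposition:coupling_properties}(ii) and Proposition \ref{lemma:adapted_extended}(iii)) and $\E^{\overline{\cG_s^t}}(\xi\chi_A)=\chi_A\E^{\overline{\cG_s^t}}\xi$, which gives the pointwise conditional bound in three lines. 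You instead re-derive the estimate from scratch out of (i) equality of conditional laws given $\overline{\cG_s^t}$ and (ii) conditional independence of $\xi$ and $\xi^{(s,t]}$ given $\overline{\cG_s^t}$, and then run conditional Jensen for the upper bound and the power inequality plus (i) for the lower bound; that part is sound, and it is in fact closer to how one would prove the unconditional statement in \cite{jossain} in the first place. One correction to your justification of (ii): the heuristic ``Haar coefficients split into those inside $(s,t]$ and those outside'' is not literally right, since the Haar functions are dyadic and will in general straddle $s$ and $t$; for such $n$ the coefficient $g_n^{(s,t]}$ is a mixture $\int_{[0,T]\setminus(s,t]}h_n\,dW^0 + \int_{(s,t]}h_n\,dW^1$, not simply ``the $W^1$-version.'' A cleaner route to (ii), which avoids Haar entirely, is to note that up to nullsets $\cF^0_T=\overline{\cG_s^t}\vee\cI$ and $\cF^{(s,t]}_T=\overline{\cG_s^t}\vee\cI'$ with $\cI:=\sigma(W^0_r-W^0_s:\,r\in(s,t])$, $\cI':=\sigma(W^1_r-W^1_s:\,r\in(s,t])$, and that $\overline{\cG_s^t},\cI,\cI'$ are mutually independent (this is implicit in the proof of Proposition \ref{lemma:adapted_extended}); the standard fact that $X\in\cG_0\vee\cG_1$-measurable and $Y\in\cG_0\vee\cG_2$-measurable with $\cG_0,\cG_1,\cG_2$ mutually independent implies $X\perp Y$ given $\cG_0$ then yields (ii) directly. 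With that replacement your argument is fully rigorous; it is more self-contained than the paper's proof but also longer, since the paper gets to reuse the companion paper's unconditional lemma.
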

\begin{proof}
We know from {\cite[\st Lemma 4.23]{jossain}} that for any $X \in \mathcal{L}_p(\overline{\Om},\cF^0,\overline{\P})$
 \begin{equation}\label{expectation_equation}
	\half \|X-X^{(s,t]}\|_p
 \le	\|X-\E^{\overline{\cG_s^t}} X \|_p
 \le	\|X-X^{(s,t]}\|_p.
 \end{equation}
Let $A \in \overline{\cG_s^t}$ such that $\overline{\P}(A)>0$.
Using Propositions \ref{proposition:coupling_properties}(ii) and \ref{lemma:adapted_extended}(iii) we have
$(\xi\chi_A)^{(s,t]} = \chi_A\xi^{(s,t]}$ and $\E^{\overline{\cG_s^t}} (\xi \chi_A) = \chi_A \E^{\overline{\cG_s^t}} \xi$,
so that applying equation (\ref{expectation_equation}) with $X=\xi \chi_A$ implies the claim.
\end{proof}

\begin{cor}\label{cor:vital}\rm
Let {\st $p\in [1,\infty)$, $0\le s < t \le T$,}
$\xi \in \mathcal{L}_p(\Om,\cF,\P)$ and $\Psi \in \mathcal{L}_0(\Om,\cF,\P)$, and denote their canonical extensions by $\tilde \xi, \tilde \Psi$, respectively.
Then
 \begin{equation*}
                  \E^{\cF_s^0} |\tilde \xi-\xi^{(s,t]}|^p      \le \tilde \Psi \quad
\Rightarrow \quad \E^{\cF_{s}}|\xi-\E^{\cG_s^t}\xi|^p \le \Psi \quad
\Rightarrow \quad \E^{\cF_{s}^0}|\tilde \xi-\xi^{(s,t]}|^p     \le 2^p\tilde \Psi.
 \end{equation*}
\end{cor}
\begin{proof}
We have that
 $\tilde \xi \in \mathcal{L}_p(\overline{\Om},\cF^0,\overline{\P})$, and that the canonical extension of $\E^{\cF_{s}}|\xi-\E^{\cG_s^t}\xi|^p$
is $\overline{\P}$-a.s. equal to $\E^{\cF_s^0} |\tilde \xi-\E^{\overline{\cG_s^t}} \tilde \xi|^p$.
Applying $\E^{\cF_s^0}$ on equation \eqref{equation:conditional_coupling}, we have $\overline{\P}$-a.s.
 \begin{equation}\label{equation:conditional_coupling2}
  \frac{1}{2^p} \E^{\cF_s^0} |\tilde \xi-\xi^{(s,t]}|^p
\le  \E^{\cF_s^0} |\tilde \xi-\E^{\overline{\cG_s^t}} \tilde \xi|^p
\le  \E^{\cF_s^0} |\tilde \xi-\xi^{(s,t]}|^p,
 \end{equation}
and the claim follows.
\end{proof}

The same idea applies also for the generator of a BSDE. However, the result corresponding to Lemma \ref{lemma:conditional_equivalence}
being technically involved, is proven in the appendix.

\begin{cor}\label{cor:equivalence_generator}
Assume that $f \in \mathcal{L}_0(\overline{\Om}_T,\Sigma_T^0,\overline{\P}_T;C(\R^{1+d}))$
satisfies {\rm (C1)} with $\theta = 1$
and {\rm (C2)} with $p=1$, let {\st $q \in [1,\infty)$} and $0 \le s < t \le T$. Moreover, let $\Psi \in \mathcal{L}_1(\Om,\cF,\P)$,
and denote its canonical extension by $\tilde \Psi$.
Then
\[  \E^{\cF_s} \left(\int_s^T \sup_{x\in \R^{d+1}} |f(r,x)-(\E^{\cH_s^t} f)(r,x)| dr \right)^q \le \Psi 
     \Rightarrow  \E^{\cF_s^0} \left(\int_s^T \sup_{x\in \R^{d+1}} |f(r,x)-f^{(s,t]}(r,x)| dr \right)^q \le 2^q \tilde \Psi, \]
and conversely,
\[ \E^{\cF_s^0} \left(\int_s^T \sup_{x\in \R^{d+1}} |f(r,x)-f^{(s,t]}(r,x)| dr \right)^q \le \tilde \Psi 
    \Rightarrow \E^{\cF_s} \left(\int_s^T \sup_{x\in \R^{d+1}} |f(r,x)-(\E^{\cH_s^t} f)(r,x)| dr \right)^q \le \Psi. \]
\end{cor}

\begin{proof}
The canonical extension of
$\E^{\cF_s} \left(\int_s^T \sup_{x\in \R^{d+1}} |f(r,x)-(\E^{\cH_s^t} f)(r,x)| dr \right)^q$
is $\overline{\P}$-a.s. equal to \linebreak
$\E^{\cF_s^0} \left(\int_s^T \sup_{x\in \R^{d+1}} |f(r,x)-(\E^{\overline{\cH_s^t}} f)(r,x)| dr \right)^q$,
so that the result follows by applying $\E^{\cF_s^0}$ to the conclusion of Proposition \ref{proposition:ugliness} with $u_1=s$ and $u_2=T$.
To apply Proposition \ref{proposition:ugliness}, we show that for all $R>0$ it holds
$\int_{\Om_T} \sup_{x \in \overline{B}(0,R)}|f(x)| d\P_T < \infty$, where $\overline{B}(0,R) \subseteq \R^{1+d}$ is the closed ball of radius $R$.
Indeed, it follows from {\rm (C2)} and {\rm (C3)} that
 \equa
     \int_{\Om_T} \sup_{(y,z) \in \overline{B}(0,R)}|f(r,\om,y,z)| d\P_T(r,\om)
&\le&  \int_{\Om_T} \sup_{(y,z) \in \overline{B}(0,R)}|f(r,\om,0,0)| + L_y|y| + L_z (1+|z|) |z| d\P_T(r,\om) \\
&\le&  \E \left(\int_0^T |f(r,0,0)| dr \right) + L_y R + L_z (1+R) R < \infty.
 \tion
\end{proof}

%%%%%%%%%%%%%%%%%%%%%%%%%%%%%%%%%%%%%%%%%%%%%%%%%%%%%%%%%%%%%%%%%%%%%%%%%%%%%%%%%%%%%%%%%%%%%%%%%%%%%%%%%%%%%%%%%%%%%%%%%%

\section{\st Proof of Theorem \ref{theorem:main} and {\st Example \ref{example:special_case}}}
\label{sec:remaining_proofs_based_on_decoupling} 

{\st
Again we use $\cF_u^0$ from Section \ref{subsection:couplingsetting} and
$\xi^{(a,b]}$ from \eqref{eqn:xiab}.}
The following is the counterpart {\st to} Assumption \ref{assumption:BMO}:
\smallskip

\begin{assumption}\label{assumption:BMO_product}
{\st Let $p\in [2,\infty)$ and $0 \le s < t \le T$.} There 
{\st are non-negative c\`adl\`ag processes $(\weight^\xi_{p,s,u,t})_{u \in [s,t]}$ and $(\weight^f_{p,s,u,t})_{u \in [s,t]}$,
such that  $((\weight^\xi_{p,s,u,t})^p)_{u \in [s,t]}$ and $((\weight^f_{p,s,u,t})^p)_{u \in [s,t]}$
are $(\cF_r)_{r \in [0,T]}$-supermartingales,}
whose canonical extensions $(\tilde{\weight}^\xi_{p,s,u,t})_{u \in [s,t]}$ and 
$(\tilde{\weight}^f_{p,s,u,t})_{u \in [s,t]}$ satisfy,
for any $u \in [s,t]$,
\begin{enumerate}
\item [{\rm ($\overline{{\rm C5}}$)}] ${\st \left ( \E^{\cF_u^0}|\xi-\xi^{(u,t]}|^p \right )^\frac{1}{p}}	\le \tilde{\weight}^\xi_{p,s,u,t}$,
\item [{\rm ($\overline{{\rm C6}}$)}] ${\st \left ( \E^{\cF_u^0}\left(\int_u^T \sup_{y,z} |f(r,y,z)-f^{(u,t]}(r,y,z)| dr\right)^p \right )^\frac{1}{p}}\le\tilde{\weight}^f_{p,s,u,t}$.
\end{enumerate}
\end{assumption}

\begin{remark}
\label{remark:equivalent_assumptions}
\rm 
It is immediate from Corollaries \ref{cor:vital} and \ref{cor:equivalence_generator}
that Assumptions \ref{assumption:BMO} and \ref{assumption:BMO_product} are equivalent
{\st in the sense, that when passing from one assumption to the other one can use the same weights multiplied
by the factor 2.}
\end{remark}

\subsection{Proof of Theorem \ref{theorem:main}}\label{subsection:theorem:main}

In this Section we deduce upper bounds for $\E^{\cF_\tau} |Y_t-Y_\tau|^p$ and  $\E^{\cF_\tau}\left(\int_{\tau}^{t} |Z_r|^2 dr\right)^{\frac{p}{2}}$, where $\tau:\Om \to [s,t]$ is any stopping time, and $0 \le s < t \le T$ are such that Assumption \ref{assumption:BMO} is satisfied.

\medskip

Our procedure consists of the following steps:
\begin{enumerate}[Step 1:]
 \item Let $0 \le s < t \le T$, $u \in [s,t]$, and consider the decomposition
	\begin{equation}\label{equation:decomposition}
         \left(\E^{\ftn_u} |Y_t - Y_u|^p\right)^{1/p}
         \le \left(\E^{\ftn_u} |Y_t - \E^{\ftn_u} Y_t|^p\right)^{1/p} + \left(\E^{\ftn_u} |Y_u - \E^{\ftn_u} Y_t|^p\right)^{1/p}
         =:  I_1^{1/p}+I_2^{1/p}.
	\end{equation}
 \item With the assumptions of Theorem \ref{theorem:main}, Proposition \ref{proposition:I_1} 
       together with Corollary \ref{cor:vital} implies
  \begin{equation*}
   I_1 +  \E^{\cF_u}\left(\int_{u}^{t} |Z_r|^2 dr\right)^{\frac{p}{2}}
   \le c_{\eqref{proposition:I_1}}^p 2^p
      {\st \left ( \weight_{p,s,u,t}^{\xi,f} \right )^p},
  \end{equation*}
  where $c_{\eqref{proposition:I_1}}>0$ depends at most on $(T,d,p,L_y,L_z,(s_N)_N)$.
 \item With the assumptions of Theorem \ref{theorem:main}, Proposition \ref{lemma:I_2} implies that
  \begin{equation*}
   I_2 \le c_{\eqref{lemma:I_2}}^p \weight_{p,s,u,t}^p,
  \end{equation*}
  where $c_{\eqref{lemma:I_2}}>0$ depends at most on $(T,d,p,L_y,L_z,(s_N)_N)$.
 \item In the end we extend the result from all deterministic times $u \in [s,t]$ to all stopping times \linebreak
 $\tau:\Om \to [s,t]$.
\end{enumerate}
\bigskip

The next Proposition is a conditional version of \cite[{\st Theorem 6.3}]{jossain}. Note that Assumption \ref{assumption:BMO} is not needed for this result.

\begin{proposition}\label{proposition:I_1}
Assume {\rm(C1)-(C4)} for $\theta \in [0,1]$ and $p \in [2,\infty) \cap (p_{{\rm(C4)}},\infty)$, and fix $0 \le u < t \le T$.
Then there exists $c_{\eqref{proposition:I_1}} > 0$ depending at most on $(T,d,p,L_y,L_z,(s_N)_N)$ such that
 \equa
&&\E^{\cF_u^0} \sup_{r \in [u,T]} |Y_r^{(u,t]} - Y_r|^p
+ \E^{\cF_u^0} \left[
     \left(\int_u^t |Z_r|^2 dr \right)^{\frac{p}{2}}
   + \left(\int_u^T |Z_r^{(u,t]} - Z_r|^2 dr \right)^{\frac{p}{2}}
               \right] \\
&\le& c_{\eqref{proposition:I_1}}^p
      \E^{\cF_u^0} \left( |\xi^{(u,t]} - \xi|
       + \int_u^T |f(r,Y_r,Z_r) - f^{(u,t]}(r,Y_r,Z_r)| \right)^p.
 \tion
\end{proposition}
\begin{proof}
Let $A^0 \in \cF_u^0$ such that $\overline{\P}(A^0)>0$. Since the $\sigma$-algebras $\cF_u^0$ and $\cF_u \otimes \{\emptyset, \Om'\}$ differ only by null-sets, 
it follows that there exists $A \in \cF_u$ with $\P(A)>0$ such that
$\overline{\P} \left( 1_{(A\times\Om')} = 1_{A^0} \right) = 1$.
Now we define
 \equa
  \overline{\xi}      &:=& (\xi - Y_u) 1_A, \\
  \overline{f}(r,y,z) &:=& f(r,y+Y_u,z)1_A 1_{(u,T]}(r), \\
  \overline{Y}_r      &:=& (Y_r-Y_u) 1_A 1_{(u,T]}(r), \\
  \overline{Z}_r      &:=& Z_r 1_A 1_{(u,T]}(r).
 \tion
Note that $\overline{f}$ is designed to satisfy for all $r \in [0,T]$ the equation
 \begin{equation*}
  \overline{f}(r,\overline{Y}_r,\overline{Z}_r)
= f(r,Y_r,Z_r) 1_A 1_{(u,T]}(r).
 \end{equation*}
It is straight-forward to check that since $(f,Y,Z)$ satisfy (C1)-(C4), also $(\overline{f},\overline{Y},\overline{Z})$ satisfy (C1)-(C4). Moreover, $(t,\om) \mapsto \overline{f}(t,\om,y,z)$ is predictable for all $(y,z) \in \R^{1+d}$.
Now we have that $(\overline{Y},\overline{Z})$ is a solution {\st to}
\begin{equation*}
 \tilde Y_t = \overline{\xi} + \int_t^T \overline{f}(r,\tilde Y_r,\tilde Z_r) dr - \int_t^T \tilde Z_r dW_r.
\end{equation*}
Since $(\overline{f},\overline{Y},\overline{Z})$ satisfy conditions (C1)-(C4), and because of Lemma \ref{lemma:Ynice}, it follows that they also 
satisfy the assumptions of \cite[{\st Theorem 6.3}]{jossain}.
Applying \cite[{\st Theorem 6.3}]{jossain} with $\psi := 0$, and $\vph := 1_{(u,t]}$ implies that there exists $c_{\st (6.3)}>0$ depending at most on $(T,d,p,L_y,L_z,(s_N)_N)$ such that
 \equa
&&\left\| \sup_{r \in [u,T]}
     |\overline{Y}_r^{(u,t]} - \overline{Y}_r| \right\|_p
+ \left\|\left( \int_u^t |\overline{Z}_r|^2 dr \right)^{\half}\right\|_p
+ \left\|\left( \int_u^T |\overline{Z}^{(u,t]}_r - \overline{Z}_r|^2 dr \right)^{\half}\right\|_p \\
&\le& c_{\st (6.3)} \left[ \left\|\overline{\xi}^{(u,t]} - \overline{\xi}\right\|_p
 + \left\| \int_u^T |\overline{f}^{(u,t]}(r,\overline{Y}_r,\overline{Z}_r)
 - \overline{f}(r,\overline{Y}_r,\overline{Z}_r)| dr \right\|_p
  \right].
 \tion
By definitions of $(\overline{\xi},\overline{Y},\overline{Z},\overline{f})$ and using properties of the decoupling operators, in particular note that \linebreak
$1_{A^0}^{(u,t]} = 1_{A^0}$ since $A^0 \in \cF_u^0$, this reads as
 \equa
&& \left\| \sup_{r \in [u,T]}
 |Y_r^{(u,t]} - Y_r| 1_{A^0}\right\|_p
+ \left\|\left( \int_u^t |Z_r|^2 dr \right)^{\half} 1_{A^0} \right\|_p
+ \left\|\left( \int_u^T |Z^{(u,t]}_r - Z_r|^2 dr \right)^{\half} 1_{A^0} \right\|_p \\
&\le& c_{\st (6.3)} \left[ \left\| (\xi^{(u,t]} - \xi) 1_{A^0} \right\|_p
 + \left\| \int_u^T |f^{(u,t]}(r,Y_r,Z_r) - f(r,Y_r,Z_r)| dr 1_{A^0} \right\|_p \right],
 \tion
which immediately implies the claim.
\end{proof}

Next we try to find an upper bound for $I_2 = \E^{\ftn_u} |Y_u - \E^{\ftn_u} Y_t|^p$. We accomplish this by upper bounding $\E^{\cF_u}|\int_u^t f(r,Y_r,Z_r)dr|^p$.
First we have a simple upper bound for the $Y$-term, given in terms of the data $(\xi,f)$.

\begin{lemma}\label{lemma:apriori_Y}
Assume {\rm (C1)-(C4)} for $\theta \in [0,1]$ and $p \in [2,\infty) \cap (p_{(C4)},\infty)$. Then we have for any $u \in [0,T]$ that
\begin{equation*}
    \E^{\cF_u} \left(\sup_{r \in [u,T]} |Y_r|
+  \left(\int_u^T |Z_r|^2 dr \right)^{\frac{1}{2}} \right)^p
\le c_{\eqref{lemma:apriori_Y}}^p \E^{\cF_u} \left(|\xi| + \int_u^T |f(r,0,0)|dr\right)^p,
\end{equation*}
where $c_{\eqref{lemma:apriori_Y}}>0$ depends at most on $(T,d,p,L_y,L_z,(s_N)_N)$.
\end{lemma}
\begin{proof}
Let $A \in \cF_u$, and put
\equa
\xi^0       &=& (\xi - Y_u) 1_A, \\
f^0(r,y,z)  &=& f(r,y+Y_u,z) 1_A 1_{(u,T]}(r), \\
Y^0_r       &=& (Y_r - Y_u) 1_A 1_{(u,T]}(r), \\
Z^0_r       &=& Z_r 1_A 1_{(u,T]}(r),
\tion
as well as
$\xi^1 = 0$, 
$f^1(r,y,z) = 0$, 
$Y^1_r      = 0$, 
$Z^1_r       = 0$.  
As in the proof of Proposition \ref{proposition:I_1}, we have that $(f^0,Y^0,Z^0)$ satisfy {\rm (C1)-(C4)}. 
This yields the assumptions of \cite[{\st Lemma 5.26}]{jossain}, which immediately implies the claim.
\end{proof}

Next we deduce the desired upper bound for $I_2$.

\begin{proposition}\label{lemma:I_2}
Assume {\rm (C1)}-{\rm (C4)} for $\theta \in [0,1]$ and $p \in [2,\infty) \cap (p_{(C4)},\infty)$,
and let $0 \le s < t \le T$ such that {\rm (C5)} and {\rm (C6)} are satisfied. Then we have for any $u \in [s,t]$ that
\[ |Y_u - \E^{\ftn_u} Y_t|^p 
    \le  c_{\eqref{lemma:I_2}}^p \bigg[
  {\st \left ( \weight_{p,s,u,t}^{\xi,f}\right)^p}
+ \E^{\cF_u} \left( \int_u^t |f(r,0,0)|dr \right)^p
	 +   (t-u)^p \E^{\cF_u} \left(|\xi| + \int_u^T |f(r,0,0)|dr\right)^p \bigg], \]
where $c_{\eqref{lemma:I_2}}>0$ depends at most on $(T,d,p,L_y,L_z,(s_N)_N)$.
\end{proposition}
\begin{proof}
We have directly
 \equa
        |Y_u-\E^{\cF_u} Y_t|^p
    &=&  \left| \E^{\cF_u} \int_u^t f(r,Y_r,Z_r) dr \right|^p \\
    &\le& \E^{\cF_u} \left|\int_u^t |f(r,0,0)| + L_y|Y_r| + L_z[1+|Z_r|]^\theta |Z_r| dr \right|^p \\
    &\le& C_p \E^{\cF_u} \bigg[ \left(\int_u^t |f(r,0,0)|dr \right)^p
                              + L_y^p(t-u)^p \sup_{r \in [u,t]}|Y_r|^p \\
                            &&
                            + L_z^p  \left(\int_u^t |Z_r| dr \right)^p
                              + L_z^p  \left(\int_u^t |Z_r|^{1+\theta} dr \right)^p\bigg].
 \tion
Lemma \ref{lemma:apriori_Y} gives us an upper bound for the second term.
For the third term we may apply Proposition \ref{proposition:I_1} and {\st Assumption \ref{assumption:BMO_product} 
with Remark \ref{remark:equivalent_assumptions}} to deduce
 \equa
   \E^{\cF_u} \left(\int_u^t |Z_r| dr \right)^p
 \le (t-u)^{p/2}  \E^{\cF_u} \left(\int_u^t |Z_r|^2 dr \right)^{p/2}
 \le (2c_{\eqref{proposition:I_1}})^p (t-u)^{p/2}  {\st \left(\weight^{\xi,f}_{p,s,u,t}\right)^p}.
 \tion
For the last term we use also Proposition \ref{proposition:Fefferman_cond_new}
and Assumption (C4) to deduce
 \equa
  \E^{\ftn_u} \left( \int_u^t |Z_r|^{1+\theta} dr \right)^p
	&\le& c_p \sup_{r \in [u,t]} \left\|\E^{\cF_r} \int_r^t |Z_v|^{2\theta} dv \right\|_\infty^{\frac{p}{2}}
				\E^{\ftn_u} \left( \int_u^t |Z_r|^2 dr \right)^{\frac{p}{2}} \\
	&\le& c_p \| \chi_{(u,t]} |Z|^\theta \|_{\bmo(S_2)}^p (2c_{\eqref{proposition:I_1}})^p {\st \left( \weight_{p,s,u,t}^{\xi,f}\right)^p}.
 \tion
\end{proof}
\bigskip

\textbf{Proof of Theorem \ref{theorem:main}:}

Assume that (C1)-(C6) hold for $\theta \in [0,1]$, $p \in [2,\infty)\cap(p_{{\rm (C4)}},\infty)$, and $0 \le s < t \le T$.
\smallskip

$\mathbf{(i)}$ It follows {\st from} Propositions \ref{proposition:I_1} and \ref{lemma:I_2} that there exists {\st a constant} $C>0$
depending at most on $(T,d,p,L_y,L_z,(s_N)_N)$, such that for all $u \in [s,t]$
\begin{equation*}
	\E^{\ftn_u} |Y_t-Y_u|^p \le C\weight_{p,s,u,t}^p.
\end{equation*}
Since {\st $((\weight_{p,s,u,t}^\xi)^p)_{u \in [s,t]}$ and $((\weight_{p,s,u,t}^f)^p)_{u \in [s,t]}$} are supermartingales,
it follows that $(\weight_{p,s,u,t}^p)_{u \in [s,t]}$ as well is a supermartingale.
Applying \cite[Theorem 3.13, page 16]{Karatzas:Shreve} on $\E^{\cF_u} \int_u^t |f(r,0,0)|dr$, we deduce that $(\weight_{p,s,u,t}^p)_{u \in [s,t]}$ has a c\`adl\`ag modification, to which we will switch without changing the notation. Applying Lemma \ref{lemma:to_stopping_times} with
$
  \alpha_u       := |Y_t-Y_u|^p$ and $\weight_u := C\weight_{p,s,u,t}^p
$
implies the claim.

\smallskip
$\mathbf{(ii)}$ It follows from Proposition \ref{proposition:I_1} that there exists $C>0$
depending at most on $(T,d,p,L_y,L_z,(s_N)_N)$, such that for all $u \in [s,t]$ we have
\begin{equation*}
	  \E^{\ftn_u} \left(\int_u^t |Z_r|^2 dr \right)^{\frac{p}{2}}
\le {\st C^p \left(\weight_{p,s,u,t}^{\xi,f} \right )^p}.
\end{equation*}
Hence, the claim follows by applying Lemma \ref{lemma:to_stopping_times} with
$\alpha_u      := \left(\int_u^t |Z_r|^2 dr \right)^{\frac{p}{2}}$ and $\weight_u := {\st C^p \left(\weight_{p,s,u,t}^{\xi,f} \right )^p}$.
\qed

\subsection{\st Proof of Example \ref{example:special_case}}
\label{subsec:proof_lemma:special_case}

{\st We start with an inequality, which} proof is the same as that of \cite[Theorem 2.5]{GGG:12}.
{\st To do so, recall that $(\overline{\cF}_t)_{t\in [0,T]}$ is the natural augmented filtration of $(W,W')$.} 

\begin{lemma}\label{lemma:BMO_X}
Assume $(A_{b,\sigma})$, let $0 \le s < t \le T$ and {\st $p\in [2,\infty)$}. Then there exists $C_{\eqref{lemma:BMO_X}}>0$ depending at most on $(T,d,p,L_{b,\sigma},K_{b,\sigma})$
such that
 \begin{equation*} \E \left[ \sup_{r \in [s,T]} |X_r^{(s,t]}-X_r|^p \mitt \overline{\cF}_s \right]
		\le C_{\eqref{lemma:BMO_X}}^p (t-s)^{p/2} \left(1 + \E \left[ \sup_{r \in [s,t]}|X_r|^p \mitt \overline{\cF}_s \right]\right). \end{equation*}
If additionally $(A_\sigma)$ holds, then  there exists $D_{\eqref{lemma:BMO_X}}>0$ depending at most on $(T,d,p,L_{b,\sigma},K_\sigma)$
such that
 \begin{equation*} \E \left[ \sup_{r \in [s,T]} |X_r^{(s,t]}-X_r|^p \mitt \overline{\cF}_s \right]
		\le D_{\eqref{lemma:BMO_X}}^p (t-s)^{p/2}. \end{equation*}
\end{lemma}
\begin{proof}
Using Proposition \ref{proposition:coupling_SDE} we have
 \equa
  X_r^{(s,t]}-X_r &=& \int_s^r \left( b(u,X_u^{(s,t]}) - b(u,X_u)\right) du
		 +  \int_s^r \sigma(u,X_u^{(s,t]}) 1_{(s,t]}(u) dW^1_u
		  - \int_s^r \sigma(u,X_u) 1_{(s,t]}(u) dW^0_u \\
		&&+ \int_s^r \left( \sigma(u,X_u^{(s,t]}) - \sigma(u,X_u) \right) (1-1_{(s,t]}(u)) dW^0_u
 \tion
for all $r \in [s,T]$, $\overline{\P}$-a.s.
Next we let $A \in \overline{\cF}_s$ with $\overline{\P}(A)>0$, and define $g:[s,T] \to [0,\infty)$ by
 \begin{equation*} g(v) := \E \left( \sup_{s \le r \le v} |X_r^{(s,t]}-X_r|^p 1_A \right)
	  = \int_A \sup_{s \le r \le v} |X_r^{(s,t]}-X_r|^p d\overline{\P}. \end{equation*}
Using basic inequalities,
we have for all $v \in [s,T]$ that
 \begin{eqnarray}\label{equation:constanttiini}
  g(v)
	&=& \E \sup_{s \le r \le v} \bigg| \int_s^r \left( b(u,X_u^{(s,t]}) - b(u,X_u)\right) 1_A du
	  + \int_s^r \sigma(u,X_u^{(s,t]}) 1_{(s,t]}(u) 1_A dW^1_u \nonumber\\
	&&- \int_s^r \sigma(u,X_u) 1_{(s,t]}(u) 1_A dW^0_u
	  + \int_s^r \left( \sigma(u,X_u^{(s,t]}) - \sigma(u,X_u) \right) 1_{(s,t]^c}(u) 1_A dW^0_u\bigg|^p \\
&\le&	C(t-s)^{p/2} \int_A 1 + \sup_{s \le r \le t}|X_r|^p d\overline{\P} + C \int_s^v g(u) du, \nonumber
 \end{eqnarray}
where the constant $C$ depends at most on $(T,d,p,L_{b,\sigma},K_{b,\sigma})$. Then
it follows from Gronwall's lemma that
\begin{equation*}\label{equation:nice}
 \int_A \sup_{s \le r \le T} |X_r^{(s,t]}-X_r|^p d\overline{\P} = g(T) \le C_{\eqref{lemma:BMO_X}}^p(t-s)^{p/2} \int_A 1 + \sup_{s \le r \le t}|X_r|^p d\overline{\P},
\end{equation*}
where the constant $C_{\eqref{lemma:BMO_X}}$ depends at most on $(T,d,p,L_{b,\sigma},K_{b,\sigma})$.
If $(A_\sigma)$ holds, then we can deduce from Equation \eqref{equation:constanttiini} that
\begin{equation*}
  g(v)
\le	C(t-s)^{p/2}\overline{\P}(A) + C \int_s^v g(u) du,
\end{equation*}
where the constant $C$ now depends at most on $(T,d,p,L_{b,\sigma},K_\sigma)$. The result again follows from Gronwall's lemma.
\end{proof}

{\bf Proof of Example \ref{example:special_case}:}

(${\bf C1}$)-(${\bf C4}$): Follow from {\cite[Theorem 4.2]{Briand:02}}, since $(A_h)$ implies that (C1) holds with $\theta=0$,
		  $(A_{b,\sigma})$ together with $(A_g)$ implies $\E|g(X_T)|^p<\infty$,
		  and $(A_{b,\sigma})$ together with $(A_h)$ implies $\E \left(\int_0^T |h(r,X_r,0,0)| dr\right)^{p} < \infty$. \\

($\overline{{\bf C5}}$): Let $0 \le s \le u \le t \le T$. Using Proposition \ref{proposition:coupling_properties}(ii) we have that
 \begin{equation*} (g(X_T))^{(u,t]}=g(X_T^{(u,t]}), \end{equation*}
and Proposition \ref{proposition:coupling_SDE} implies that $X^{(u,t]}$ is the solution of
\begin{equation*} X_r^{(u,t]} = x + \int_0^r b(v,X_v^{(u,t]})dv
                    + \int_0^r \sigma(v,X_v^{(u,t]})dW_v^{(u,t]}, \quad r \in [0,T]. \end{equation*}
It follows from $(A_g)$ that
\begin{equation*}
  \E^{\cF_u^0} |g(X_T)-g(X_T^{(u,t]})|^p
  \le L_g^p \E^{\cF_u^0} |X_T-X_T^{(u,t]}|^p.
\end{equation*}
Finally, Lemma \ref{lemma:BMO_X} implies
 \begin{equation*}
      \E^{\cF_u^0} |X_T-X_T^{(u,t]}|^p
  \le C_{\eqref{lemma:BMO_X}}^p (t-u)^{p/2} \left(1+\E \left[ \sup_{r \in [u,t]}|X_r|^p \mitt \cF^0_u \right]\right), 
 \end{equation*}
and if $(A_\sigma)$ holds, then Lemma \ref{lemma:BMO_X} implies
 \begin{equation*}
      \E^{\cF_u^0} |X_T-X_T^{(u,t]}|^p
  \le D_{\eqref{lemma:BMO_X}}^p (t-u)^{p/2}. 
 \end{equation*}

($\overline{{\bf C6}}$): Let $0 \le s \le u \le t \le T$. We notice that Proposition \ref{proposition:coupling_properties} implies
 \begin{equation*} (h(r,X_r,y,z))^{(u,t]} = h(r,X_r^{(u,t]},y,z). \end{equation*}
The result again follows from Lemma \ref{lemma:BMO_X}, since $(A_h)$ implies
\begin{equation*}
  \E^{\cF_u^0} \left(\int_0^T \sup_{y,z} |h(r,X_r,y,z)-h(r,X_r^{(u,t]},y,z)|dr\right)^{p}
  \le \E^{\cF_u^0} \left(\int_0^T L_h |X_r^{(u,t]}-X_r| dr\right)^{p}.
 \end{equation*}
For all $u \in [s,t]$ we let
 \begin{equation*}
  \weight_u := \weight_{p,s,u,t} := (t-u)^{p/2} \left(1+\E \left[ \sup_{r \in [u,t]}|X_r|^p \mitt \cF_u \right]\right),
 \end{equation*}
and get that the process $(\weight_u)_{u \in [s,t]}$ is a supermartingale. Since
$u \mapsto \E \weight_u$ is continuous, there exists a c\`adl\`ag modification of the process $(\weight_r)_{r \in [s,t]}$.
This modification is a c\`adl\`ag supermartingale, and for any fixed $u \in [s,t]$ its canonical extension
coincides $\overline{\P}$-a.s. with
 \begin{equation*}
  (t-u)^{p/2} \left(1+\E \left[ \sup_{r \in [u,t]}|X_r|^p \mitt \cF_u^0 \right]\right).
 \end{equation*}
Hence, there exists $C>0$ depending at most on $(T,d,p,L_g,L_h,L_{b,\sigma},K_{b,\sigma})$
such that Assumption \ref{assumption:BMO_product} holds
for all $0 \le s < t \le T$ with
 \begin{equation*}
 {\st  \left (\tilde{\weight}^f_{p,s,u,t} \right )^p = \left ( \tilde{\weight}_{p,s,u,t}^\xi\right )^p} := C^p (t-u)^{p/2} \left(1+\E \left[ \sup_{r \in [u,t]}|X_r|^p \mitt \cF_u^0 \right]\right).
 \end{equation*}
If additionally $(A_\sigma)$ holds, then there exists $D>0$ depending at most on $(T,d,p,L_g,L_h,L_{b,\sigma},K_\sigma)$
such that we may choose
 \begin{equation*} {\st \tilde{\weight}^f_{p,s,u,t} = \tilde{\weight}_{p,s,u,t}^\xi=D(t-u)^{1/2}}. \end{equation*}
\qed

%%%%%%%%%%%%%%%%%%%%%%%%%%%%%%%%%%%%%%%%%%%%%%%%%%%%%%%%%%%%%%%%%%%%%%%%%%%%%%%%%%%%%%%%%%%%%%%%%%%%%%%%%%%%%%%%%%%%%%%%%%

{\st
\section{Some Applications}
\label{section:applications}

In this section we discuss some applications of the tail estimates obtained in Theorem \ref{theorem:FBSDE_tail_Y}.
We can use them in two different ways: Firstly, we can exploit 
the tail estimates (Sections \ref{sec:spline_approximation} and \ref{sec:direct_simulation}), 
secondly we may exploit the fact that we can control {\em all conditional moments} which might allow
us for a change of the underlying measure (Section \ref{sec:change_of_measure}).

\subsection{Uniform spline approximation of the process $Y$}
\label{sec:spline_approximation}

To get a path-dependent approximation of the process $Y=(Y_t)_{t\in [0,T]}$ based on a method that provides  
approximations $\widehat Y_{t_i}$ of $Y_{t_i}$ for some  deterministic time-net
$\pi=(t_i)_{i=0}^n$, $0=t_0 < \cdots < t_n =T$,  one can consider a linear spline 
\[\widehat  Y_t^\pi := (1-\theta) \widehat Y_{t_{i-1}} + \theta \widehat Y_{t_i}
   \sptext{1}{for}{1} t\in I_i^\pi := [t_{i-1},t_i] \sptext{.5}{with}{.5}
   t=(1-\theta)t_{i-1} + \theta t_i. \]
We get that 
\[ \sup_{i=0,\ldots,n} |\widehat{Y}_{t_i}^\pi-Y_{t_i}|
   \le \| \widehat Y^\pi - Y \|_{C([0,T])} 
   \le \sup_{i=0,\ldots,n} |\widehat{Y}_{t_i}^\pi-Y_{t_i}^\pi| +   \| Y^\pi -Y \|_{C([0,T])}, \]
where, as above,  
 \[ Y_t^\pi := (1-\theta) Y_{t_{i-1}} + \theta Y_{t_i}
   \sptext{1}{for}{1} t\in I_i^\pi \sptext{.5}{with}{.5}
   t=(1-\theta)t_{i-1} + \theta t_i. \]
The process $Y^\pi$ is a piece-wise linear and continuous process, but fails to be 
adapted in general. 
In this section we provide in Propositions \ref{prop:large_deviation_general_sigma} and \ref{prop:large_deviation_bounded_sigma} below
large deviation type estimates for $\|Y^\pi-Y\|_{C([0,T])}$.
We start with the following simple observation that links the distribution of the spline to our
results:

\begin{lemma}
\label{lemma:basic_spline_estimate}
Assume that there is a $\lambda_0\ge 0$ and a function $G:[\la_0,\infty)\times [0,T] \to [0,\infty)$,
non-decreasing in its second component,
such that 
\[ \P\left (\sup_{u\in [s,t]} \frac{|Y_u-Y_s|}{\sqrt{t-s}} > \la \right ) \le  G(\la,t-s)
   \sptext{1}{for}{1} \la\ge \la_0. \]
Then one has that
\[ \P\left (\sup_{t\in [0,T]} |Y_t-Y_t^\pi| > \la\right ) 
   \le n G\left (\frac{\la}{2\sqrt{|\pi|}},|\pi|\right )
   \sptext{1}{for}{1} \la \ge 2 \sqrt{|\pi|} \la_0.
\]
\end{lemma}

 \begin{proof}
We have that
\equa
\sup_{t\in [0,T]} |Y_t-Y_t^\pi| 
& = & \sup_{i=1,\ldots,n} \sup_{\theta\in [0,1]} \left |Y_{(1-\theta)t_{i-1}+\theta t_i} - ((1-\theta)Y_{t_{i-1}} + \theta Y_{t_i})\right |\\
&\le& \sup_{i=1,\ldots,n} \max\left \{ \sup_{t\in I_i^\pi} |Y_t-Y_{t_{i-1}}|, \sup_{t\in I_i^\pi} |Y_t-Y_{t_i}|\right \} \\
&\le& \sup_{i=1,\ldots,n} \left ( \sup_{t\in I_i^\pi} |Y_t-Y_{t_{i-1}}| + |t_i-t_{i-1}|\right ) \\
&\le& 2 \sup_{i=1,\ldots,n} \sup_{t\in I_i^\pi} |Y_t-Y_{t_{i-1}}|.
\tion
For $\la\ge 2 \la_0$ this implies our statement because
\equa
      \P\left (\sup_{t\in [0,T]} |Y_t-Y_t^\pi| > \sqrt{|\pi|}\la\right ) 
&\le& \sum_{i=1}^n  \P \left ( 2 \sup_{t\in I_i^\pi} |Y_t-Y_{t_{i-1}}| > \sqrt{|\pi|} \la \right )  \\
&\le& \sum_{i=1}^n  \P \left ( \sup_{t\in I_i^\pi} |Y_t-Y_{t_{i-1}}| > \frac{\sqrt{|I_i^\pi|} \la}{2} \right )  \\
&\le& \sum_{i=1}^n  G\left (\frac{\la}{2},|I_i^\pi|\right )  \\
&\le& n G\left (\frac{\la}{2},|\pi|\right). \qedhere
\tion
\end{proof}

In order to apply Theorem \ref{theorem:FBSDE_tail_Y} we let,
for $\lambda>0$ and $0\le s < t \le T$,
\equa
F(\lambda) &:= & \P\left (\sup_{u\in [0,T]} |X_u| > \lambda \right ),\\
G_\ell(\lambda) &:= & \inf \left \{ e^{-\mu} + F\left (\sqrt{\nu^2-1}\right ) : \lambda = \mu \nu \mbox{ with }\mu >0,\nu>1  \right \},\\
G_b(\lambda,t-s) &:= & \inf \left \{ e^{-\mu} + F\left ( \sqrt{ \frac{\nu^2 -1}{t-s} } \right ) : \lambda = \mu \nu \mbox{ with } \mu>0,\nu >1 \right \}.
\tion
The subscript $\ell$ stands for a linear growth of $\sigma$, the subscript $b$ for a bounded $\sigma$. 
For the function $F$ we get the following upper bounds:

\pagebreak
\begin{lemma}
\label{lemma:upper_bound_Psi}
\hfill
\begin{enumerate}
\item[$\mathbf{(i)}$] 
      Under the condition $(A_{b,\sigma})$ there exist $\alpha>0$ and $\la_0\ge 1$ depending at most on $(x,b,\sigma,T)$
      such that, for $\lambda\ge \lambda_0$,
      \[ F(\la) \le e^{-\alpha (\log \la)^2}. \]
\item[$\mathbf{(ii)}$] 
      Under the conditions $(A_{b,\sigma})$ and $(A_\sigma)$ there is a $c>0$ depending at most on $(x,b,\sigma,T)$
      such that, for $\lambda \ge  0$,
      \[ F(\la) \le c e^{-(\la/c)^2}. \]
\end{enumerate}
\end{lemma}

\begin{proof}
For $p\in [2,\infty)$ one has the estimates 
\[ \left \| \sup_{t\in [0,T]} | X_t| \right \|_p \le e^{cp}
    \sptext{1}{and}{1}
    \left \| \sup_{t\in [0,T]} | X_t| \right \|_p \le c \sqrt{p} \]
under $(A_{b,\sigma})$ and $(A_{b,\sigma},A_\sigma)$, respectively,
for constants $c>0$  depending  at most on $(x,b,\sigma,T)$.
Both estimates are known. They can be proved
by the standard Gronwall argument (cf. \cite[Lemma A.2]{Avikainen:arxiv:07}) but one has to use the estimate 
$\beta_p \le c \sqrt{p}$ for $p\in [2,\infty)$ from Proposition \ref{proposition:BDG}.
\medskip

$\mathbf{(i)}$ For all $\la>0$,
\[ \P(X_T^* >\la) \le \frac{1}{\la^p} \E |X_T^*|^p 
                  \le \frac{1}{\la^p} e^{cp^2}. \]
We set $\lambda_0 := e^{4c}$ and get for $\lambda \ge \lambda_0$ a $p\in [2,\infty)$ with 
$p=\frac{\log \la}{2c}$, so that   
$\frac{1}{\la^p} e^{cp^2}=e^{-\frac{(\log \la)^2}{4c}}$.
                  
$\mathbf{(ii)}$  Again, for all $\la>0$,
\[ \P(X_T^* >\la) \le \frac{1}{\la^p} \E |X_T^*|^p 
                  \le \frac{1}{\la^p} c^p p^{\frac{p}{2}}. \]
Assume $\la \ge \sqrt{2} c e$ and set $p:= (\la /(ce))^2 \in [2,\infty)$. Then
\[ \P(X_T^* >\la) \le e^{-\la^2/(c e)^2}. \]
Consequently, $\P(X_T^* >\la) \le e^{2-\la^2/(c e)^2}$ for all $\la\ge 0$.
\end{proof}

We derive the following bounds for $G_\ell$ and $G_b$:

\begin{lemma}\hfill
\label{lemma:upper_bound_Phi}

\begin{enumerate}
\item[$\mathbf{(i)}$] 
      Under the condition $(A_{b,\sigma})$ there exist $\alpha>0$ and $\la_0\ge 1$ depending at most on $(x,b,\sigma,T)$
      such that, for $\lambda\ge \lambda_0$,
      \[ G_\ell(\la) \le e^{-\alpha (\log \la)^2}. \]
\item [$\mathbf{(ii)}$] 
      Under the conditions $(A_{b,\sigma})$ and $(A_\sigma)$ there is a $c>0$ depending at most on $(x,b,\sigma,T)$
      such that, for $0\le s < t \le T$,
      \[ G_b(\la,t-s) \le 
         \begin{cases}c e^{-\frac{1}{c} \la}& 0<\la \le \frac{1}{t-s} \\
                     c e^{-\frac{1}{c} \la^\frac{2}{3} (t-s)^{-\frac{1}{3}}  }& \la > \frac{1}{t-s}   \\
                        \end{cases}. \]
\end{enumerate}
\end{lemma}

Under the conditions $(A_{b,\sigma},A_\sigma)$ we let $\la_0:=0$ and
$G_b(0,t-s):=\lim_{\la\downarrow 0} G_b(\la,t-s)$ so that
$G_b(0,t-s)\le c$.

\begin{proof}[Proof of Lemma \ref{lemma:upper_bound_Phi}]
For both cases we can replace $\nu>1$ in the definitions of $G_\ell$ and $G_b$ by $\nu \ge \sqrt{4/3}$ to replace the term 
$\sqrt{\nu^2-1}$ by $\nu/2$ to simplify the computation.
\smallskip

$\mathbf{(i)}$ We use the decomposition $\la=\mu\nu=\sqrt{\la}\sqrt{\la}$ and Lemma \ref{lemma:upper_bound_Psi}(i) (where $\alpha,\la_0>0$
might change).
\smallskip

$\mathbf{(ii)}$ In the case $\la \le \frac{1}{t-s}$ we use the decomposition $\mu =\sqrt{3/4} \lambda$ and 
$\nu = \sqrt{4/3}$, and in the case $\la > \frac{1}{t-s}$ we use $\mu =\sqrt{3/4} \lambda^{2/3}(t-s)^{-1/3}$ and $\nu = \sqrt{4/3}\la^{1/3} (t-s)^{1/3}$.
Then we apply Lemma \ref{lemma:upper_bound_Psi}(ii).
\end{proof}
\medskip

From Theorem \ref{theorem:FBSDE_tail_Y} we know that
\begin{equation}
\label{eqn:upper_bounds_Y_via_Phi}
\P \left ( \sup_{u\in [s,t]} \frac{|Y_u-Y_s|}{\sqrt{t-s}} > A_{\eqref{theorem:FBSDE_tail_Y}} \lambda \right )
   \le A \begin{cases}
         G_\ell(\lambda)     &: (A_{b,\sigma}) \\
         G_b(\lambda,t-s) &: (A_{b,\sigma},A_\sigma)
         \end{cases}
\end{equation}
for $\la \ge \la_0$ and $0\le s < t \le T$. Here $A:= c_0 \vee e$ with $c_0>0$ taken from Theorem \ref{theorem:FBSDE_tail_Y}, 
$A_{\eqref{theorem:FBSDE_tail_Y}}:= c_0 c_\eqref{theorem:main_FBSDE}$ in the case $(A_{b,\sigma})$,
and $A_{\eqref{theorem:FBSDE_tail_Y}}:= c_0 d_\eqref{theorem:main_FBSDE}$ in the case $(A_{b,\sigma},A_\sigma)$. 
To provide the large deviation type inequalities, we let
$\pi_n=(iT/n)_{i=0}^n$ be the equidistant net with $n+1$ knots and denote $Y^n:=Y^{\pi_n}$.
\medskip

\begin{proposition}
\label{prop:large_deviation_general_sigma}
Under the condition $(A_{b,\sigma})$ one has for $n\ge 2$ and $\la\ge \la_0$  that 
\[ \P\left (\| Y-Y^n \|_{C([0,T])} > \alpha_n \lambda\right )
\le 2 A e^{-\alpha  \left (\log \la \right )^2}.\] 
where $\alpha_n:= 2\sqrt{T} A_{\eqref{theorem:FBSDE_tail_Y}} n^{-1/2} e^{\sqrt{\frac{1}{\alpha}\log \frac{n}{2}}}$
and $\alpha,\la_0$ are taken from Lemma \ref{lemma:upper_bound_Phi}(i).

\end{proposition}

\begin{proof}
For $n\ge 2$ and $\la\ge \la_0$ we get from Lemmas \ref{lemma:basic_spline_estimate} and \ref{lemma:upper_bound_Phi}(i) that
\equa
      \P(\| Y-Y^n \|_{C([0,T])} > \alpha_n \lambda)
&\le& n A G_\ell\left ( \frac{\alpha_n \lambda \sqrt{n}}{2\sqrt{T} A_{\eqref{theorem:FBSDE_tail_Y}}}\right ) \\
& = & n A G_\ell\left ( \la e^{\sqrt{\frac{1}{\alpha} \log \frac{n}{2}}} \right ) \\
&\le& n A e^{-\alpha\left  (\log \left ( \la e^{\sqrt{\frac{1}{\alpha}\log \frac{n}{2}}}\right  ) \right )^2}\\
& = & n A e^{-\alpha \left ( \log  \la + \sqrt{\frac{1}{\alpha}\log \frac{n}{2}}\right )^2}\\
&\le& n A e^{-\alpha (\log  \la)^2 - \log \frac{n}{2}}\\
& = & 2 A e^{-\alpha (\log  \la)^2}.
\tion
\end{proof}

Using \eqref{eqn:upper_bounds_Y_via_Phi} for $(A_{b,\sigma},A_\sigma)$ 
gives the following large deviation estimate:

\begin{proposition}
\label{prop:large_deviation_bounded_sigma}

Under the conditions $(A_{b,\sigma})$ and $(A_\sigma)$ there is a constant $c>0$ such that 
\equa
\limsup_{\lambda \to \infty} \frac{\log \P(\|Y-Y^n\|_{C([0,T])} > \lambda)}{\lambda^\frac{2}{3}} &\le& - c n^\frac{2}{3}
       \sptext{1}{for}{1} n\ge 1,\\
\limsup_{n \to \infty} \frac{\log \P(\|Y-Y^n\|_{C([0,T])} > \lambda)}{n^\frac{1}{2}} &\le& - c \lambda
        \sptext{1.5}{for}{1}\lambda >0.
\tion
\end{proposition}

\begin{proof}
For the first inequality we use the case $\la > \frac{1}{t-s}=\frac{n}{T}$ in inequality \eqref{eqn:upper_bounds_Y_via_Phi},
for the second inequality the case $0<\la \le \frac{1}{t-s}=\frac{n}{T}$   in inequality \eqref{eqn:upper_bounds_Y_via_Phi}.
\end{proof}

\subsection{Confidence interval for direct simulation}
\label{sec:direct_simulation}

Assume that we are interested in the computation of $E(Y_t-Y_s)$ for fixed $0\le s < t \le T$ and that we can simulate independent copies
$D_1,\ldots,D_n$ of $Y_t-Y_s$.  Below we give an estimate 
on the confidence interval that is based on our tail-estimates. We start with a general lemma, that should be folklore.
To this end, assume an i.i.d. sequence of random variables $D_1,D_2,\ldots: \widehat\Omega\to \R$ such that 
$D_1 \in \bigcap_{p\in (0,\infty)} L_p(\widehat\Omega,\widehat\cF,\widehat\P)$, and let
\[ S_n := \frac{1}{n} (D_1+\cdots+D_n) 
   \sptext{1}{and}{1}
   \mu:=\E D_1. \] 
\begin{lemma}
\label{lemma:variance_reduction}

For $\vare>0$ one has 
   \[ \widehat\P(|S_n-\mu|>\vare)
       \le \inf_{p\in [2,\infty)} \left ( \frac{2(p-1)}{\sqrt{n} \vare} \| D_1 \|_p \right)^p. \]
 \end{lemma}
   
\begin{proof}
For $p\in [2,\infty)$ we have
\[
      \| S_n-\mu\|_p
 =  \frac{1}{n} \left \| \sum_{i=1}^n [D_i-\mu] \right \|_p 
\le  \frac{\beta_p}{n} \left \| \left ( \sum_{i=1}^n |D_i-\mu|^2 \right )^\frac{1}{2} \right \|_p 
\le \frac{\beta_p}{n} \left ( \sum_{i=1}^n \| D_i -\mu\|_p^2 \right )^\frac{1}{2} 
 =  \frac{\beta_p}{\sqrt{n}}  \| D_1 -\mu\|_p
\]
where from \cite[Theorem 3.3]{Burkholder:89} we know that we can take $c_p=p-1$. Therefore, for $\vare>0$,
\[ \widehat\P(|S_n-\mu|>\vare)
   \le \frac{1}{\vare^p} \| S_n-\mu\|_p^p
   \le \frac{1}{\vare^p} \left (\frac{p-1}{\sqrt{n}}\right )^p  \| D_1 -\mu\|_p^p
   \le \left ( \frac{2(p-1)}{\sqrt{n} \vare} \right )^p  \| D_1 \|_p^p. \]
 \end{proof}  
 
Now let us assume that condition $(A_{b,\sigma})$ is satisfied and fix $0\le s < t \le T$.
Let $S_n$ be a direct simulation of $Y_t-Y_s$. From Lemma \ref{lemma:upper_bound_Phi}(i) we can deduce 
 \[  \left \| \frac{Y_t-Y_s}{\sqrt{t-s}} \right \|_p \le  e^{cp} \]
 for some $c>0$ and all $p\in [2,\infty)$. By Lemma \ref{lemma:variance_reduction},
 \[ \widehat \P(|S_n-\mu|>\vare)
   \le \left (\frac{2(p-1)}{\sqrt{n} \vare}  \sqrt{t-s} e^{cp} \right )^p  
   \le \left (\alpha  \frac{\sqrt{t-s}}{\sqrt{n} \vare}  e^{\alpha p} \right )^p\] 
 for some $\alpha=\alpha(c)>0$ and all $p\in [2,\infty)$.
Define
$\psi(\delta):= \inf_{p \in [2,\infty)} \left (\alpha  \delta e^{\alpha p} \right )^p$
for $\delta>0$. Then
\[  \widehat \P(|S_n-\mu|>\vare) \le \psi \left ( \frac{\sqrt{t-s}}{\sqrt{n} \vare} \right ). \]
It is not difficult to check that
\[ \lim_{\delta \downarrow 0} \frac{\psi(\delta)}{\delta^M} = 0 \sptext{1}{for all}{1} M>0 \]
(consider $\delta\in (0,e^{-4\alpha})$ and choose $p\in [2,\infty)$ with $\delta=e^{-2\alpha p}$ so that
$\psi(\delta) \le (\alpha^2\delta)^{\frac{p}{2}}$).
For example, this implies
\[  \lim_{n\to \infty} n^M \psi \left ( \frac{\sqrt{t-s}}{\sqrt{n} \vare} \right ) = 0 \sptext{1}{for all}{1} M>0. \]

\subsection{Change of measure}
\label{sec:change_of_measure}
We describe a consequence of the BMO-estimates with respect to a change of the underlying measure. 
Let $\Q$ be a probability measure that is absolutely continuous with respect to $\P$ and such that  for 
$L:= d\Q/d\P>0$ there are $c>0$ and $v\in (1,\infty)$ such that
\[ \E \left [L^v\mitt \cF_\tau \right ] \le c^v  \left [ \E \left [L \mitt \cF_\tau\right ] \right ]^v, \]
for all stopping times $\tau:\Omega \to [0,T]$
(i.e. $\Q$ satisfies a reverse H\"older inequality with exponent $v$, cf. Definition \ref{definition:A_p}).
Assume a positive c\`adl\`ag and adapted process  $\Phi=(\Phi_t)_{t\in [0,T]}$, $p\in (0,\infty)$, and 
a continuous and adapted process  $A=(A_t)_{t\in [0,T]}$ with $A_0\equiv 0$  with 
$ \| A \|_{\BMO_p^\Phi} < \infty$ (see equation \eqref{eqn:definition_BMO_p_Phi}).
 Let $\tau:\Omega \to [0,T]$ be a stopping time,
$B\in \cF_\tau$ be of positive measure, and $\vare,\nu>0$. If $\Q_B$ is the normalized restriction of $\Q$ to $B$
and $1=\frac{1}{u}+\frac{1}{v}$, then
\equa
&   & \hspace*{-4em}
      \Q_B\left (|A_T-A_{\tau}|>(1+\vare)\nu\right ) \\
&\le& \Q_B\left (|A_T-A_{\tau}|>(1+\vare)\Phi_\tau\right ) + \Q_B(\Phi_\tau>\nu) \\
& = & \frac{1}{\Q(B)} \int_B 1_{\{ |A_T-A_{\tau}|>(1+\vare)\Phi_\tau\}}  L d \P + \Q_B(\Phi_\tau>\nu) \\
& = & \frac{1}{\Q(B)} \int_B \E \left [ 1_{\{ |A_T-A_{\tau}|>(1+\vare)\Phi_\tau\}}  L \mitt \cF_\tau \right ] d \P
      + \Q_B(\Phi_\tau>\nu) \\
&\le& \frac{1}{\Q(B)}
      \int_B \E \left [ 1_{\{ |A_T-A_{\tau}|>(1+\vare)\Phi_\tau\}} \mitt \cF_\tau \right ]^\frac{1}{u}
             \E \left [ L^v                                        \mitt \cF_\tau \right ]^\frac{1}{v} d \P + \Q_B(\Phi_\tau>\nu) \\
&\le& \frac{c}{\Q(B)}
      \int_B \E \left [ 1_{\{ |A_T-A_{\tau}|>(1+\vare)\Phi_\tau\}} \mitt \cF_\tau \right ]^\frac{1}{u}
             \E [L|\cF_\tau] d \P + \Q_B(\Phi_\tau>\nu) \\
& = & c
      \int_B \E \left [ 1_{\{ |A_T-A_{\tau}|>(1+\vare)\Phi_\tau\}} \mitt \cF_\tau \right ]^\frac{1}{u}
             d \Q_B + \Q_B(\Phi_\tau>\nu) \\
&\le& c\int_B \left [ \frac{1}{(1+\vare)^p} \| A \|_{\BMO_p^\Phi}^p \right ]^\frac{1}{u} d \Q_B 
      + \Q_B(\Phi_\tau>\nu) \\
& = & \frac{c}{(1+\vare)^\frac{p}{u}} \| A \|_{\BMO_p^\Phi}^\frac{p}{u}   
      + \Q_B(\Phi_\tau>\nu) \\
&\le& \frac{c}{(1+\vare)^\frac{p}{u}} \| A \|_{\BMO_p^\Phi}^\frac{p}{u}   
      + \Q_B\left (\sup_{u\in [\tau,T]}\Phi_u>\nu\right ).
\tion
As a consequence we can apply Theorem \ref{theorem:Stefan1}, but now for the measure $\Q$ instead of $\P$.
Let us come back to our setting and recall the inequality \eqref{eqn:global_BMO_esimate_Y}, i.e.
\[    \| (Y_t-Y_0)_{t\in [0,T]} \|_{\BMO_p^{\Phi}} \le c_{\eqref{theorem:main}}. \]
So we can apply this change of measure technique in our context. 
A careful investigation of local settings (i.e. the consideration of fixed general sub-intervals $[s,t]\subset [0,T]$)
is not yet done.

\subsection{Outlook}

The methodology to use weighted BMO spaces in stochastic problems, in order to replace $L_p$ spaces,
is exploited in the context of approximation problems for stochastic integrals in \cite{Geiss:wBMO} and 
in the context of variational problems for BSDEs in this article. The natural question is, to 
which other problems this general technique might be applied.
 A natural candidate for such a problem would be the investigation of existing 
approximation schemes for BSDEs from the literature
(for example, \cite{Bouchard-Elie-Touzi},\cite{Bouchard-Touzi}, \cite{Zhang:04}, \cite{Zhang:01}). 
It might be that the partial backward structure of these schemes helps to apply 
weighted BMO techniques where one could use existing $L_2$ results.

%%%%%%%%%%%%%%%%%%%%%%%%%%%%%%%%%%%%%%%%%%%%%%%%%%%%%%%%%%%%%%%%%%%%%%%%%%%%%%%%%%%%%%%%%%%%%%%%%%%%%%%%%%%%%%%%%%%%%%%%%%

\section{\st Appendix A: General tools}
\label{section:appendix_general_tools}

The following lemmas have been used before, Lemma \ref{lemma:(ii)}
to justify assumption {\st (C6)}, and Lemma \ref{lemma:to_stopping_times}
in the proof of Theorem  \ref{theorem:main}.}
\medskip

\begin{lemma}\label{lemma:(ii)}
Assume that $\M$ is locally $\sigma$-compact.
Let $(f(x))_{x\in \M}$ be a continuous stochastic process defined on a probability space
$(\hat\Om,\hat\cF,\hat\P)$, such that $\E \sup_{x \in K_n} |f(x)| < \infty$ for all $n \in \N$.
If $\cG \subseteq \hat\cF$ is a $\sigma$-algebra, then there exists a unique\footnote{Unique up to indistinguishability.}
continuous stochastic process
$((\E^\cG f)(x))_{x \in \M} := (g(x))_{x \in \M}$ such that ${\st \hat{\P}}\left(\E^\cG(f(x)) = g(x)\right)=1$ for all $x \in \M$, and
such that $g(x)$ is $\cG$-measurable for every $x \in \M$.
\end{lemma}
\begin{proof}
\textbf{(i)} Let $K$ be one of the sets $K_n$ as in Definition \ref{definition:localcompact}, and consider $f$ as the Banach-space valued random variable $f:\hat\Om \to C(K)$,
where $C(K)$ is the space of continuous functions on $K$ equipped with the sup-norm. This space is separable,
so that applying \cite[Theorem V.1.4]{Diestel} and properties of the Bochner integral
we find a $g:\hat\Om \to C(K)$ with the required properties.
\smallskip

\textbf{(ii)} Defining $(g^{K_n}(x))_{x\in K_n}$ and $(g^{K_{n+1}}(x))_{x\in K_{n+1}}$ as in step \textbf{(i)},
we have that $g^{K_{n}}$ and $g^{K_{n+1}}$ are indistinguishable in $K_n$.
Hence, we can consistently define one process in $\bigcup_{n=1}^\infty \mathring{K_n} = \M$.
\end{proof}
\medskip

\begin{lemma}\label{lemma:to_stopping_times}
Let $0 \le s < t \le T$, and assume that $(\alpha_u)_{u \in [s,t]}$ is a process with c\`adl\`ag paths, and such that
$\E \sup_{r \in [s,t]} |\alpha_r| < \infty$.
If for all $u \in [s,t]$ we have
$
  \E^{\cF_u} |\alpha_u| \le \weight_u,
$
where $(\weight_{u})_{u \in [s,t]}$ is a supermartingale with c\`adl\`ag paths, then
$
  \E^{\cF_\tau} |\alpha_\tau| \le \weight_\tau
$
holds for all stopping times $\tau:\Om \to [s,t]$.
\end{lemma}
\begin{proof}
\textbf{(i)} Assume that $\tau:\Om \to \{ s_1,\dots,s_n \}$ is a stopping time for some $n \in \N$, $s\le s_1 \le \dots \le s_n \le t$.
We have for all $i=1,\dots,n$ that
$\E^{\ftn_{s_i}} |\alpha_{s_i}| \le \weight_{s_i}.$
Now we have for any $A \in \cF_\tau$ that
\begin{equation*}
\int_A |\alpha_\tau| d\P
=  \sum_{i=1}^n \int_{A\cap\{\tau=s_i\}} |\alpha_{s_i}| d\P
\le \sum_{i=1}^n \int_{A\cap\{\tau=s_i\}} \weight_{s_i} d\P
=  \int_A \weight_{\tau} d\P.
\end{equation*}
\textbf{(ii)} Let $\tau:\Om \to [s,t]$ be a stopping time, and let $(\tau_n)_{n \in \N}$ be a sequence of stopping times such that \linebreak
$\tau_n(\om) \downarrow \tau(\om)$ for all $\om \in \Om$, and $\tau_n:\Om \to [s,t]$ has a finite range.
By step \textbf{(i)} we know that for all {\st $n \ge 1$ we have}
\begin{equation}\label{equation:discreetti}
\E^{\ftn_{\tau_n}} |\alpha_{\tau_n}| \le \weight_{\tau_n}.
\end{equation}
Consider now the martingale
\begin{equation*} (M_r^n)_{r \in [s,t]} := \left(\E \left[|\alpha_{\tau_n}| \mitt \cF_r \right]\right)_{r \in [s,t]}. \end{equation*}
By optional stopping, and the fact that $\tau \le \tau_n \le t$ for all {\st $n \ge 1$}, we have
\begin{equation*} \E\left[ M_{\tau_n}^n \mitt \cF_\tau \right] = M_\tau^n. \end{equation*}
Moreover, using optional stopping and the fact that $\weight$ is a right-continuous supermartingale,
we deduce
\begin{equation*} \E^{\ftn_\tau} \weight_{\tau_n} \le \weight_{\tau}. \end{equation*}
Now, applying $\E^{\ftn_\tau}$ on both sides of equation (\ref{equation:discreetti}), we have that
\begin{equation*} \E^{\cF_\tau} |\alpha_{\tau_n}| \le \weight_{\tau}. \end{equation*}
Since $\alpha$ is right-continuous, we may apply dominated convergence to deduce that we have for any $A \in \ftn_\tau$
\begin{equation*}
\int_A \E\left[ |\alpha_\tau| \mitt \cF_\tau \right] d\P
=  \lim_n \int_A |\alpha_{\tau_n}| d\P
\le \lim_n \int_A \weight_{\tau} d\P.
\end{equation*}
\end{proof}

%%%%%%%%%%%%%%%%%%%%%%%%%%%%%%%%%%%%%%%%%%%%%%%%%%%%%%%%%%%%%%%%%%%%%%%%%%%%%%%%%%%%%%%%%%%%%%%%%%%%%%%%%%%%%%%%%%%%%%%%%%

\section{\st Appendix B: Tools related to decoupling}
\label{section:appendix_tools_decoupling}

{\st The aim of the section is the proof of Proposition \ref{proposition:ugliness} below that 
was used in the proof of Corollary \ref{cor:equivalence_generator}.
We start with some preparations before we turn to  Proposition \ref{proposition:ugliness}.}
\medskip

Given a probability space $(\hat \Om,\hat \cF,\hat \P)$, the space of equivalence classes $L_0(\hat \Om,\hat \cF,\hat \P)$ can be equipped with the metric
 \begin{equation*}
  d(X,X') := \int_{\hat \Om} \frac{|X-X'|}{1+|X-X'|} d\hat \P.
 \end{equation*}
It is proven in \cite[\st Proposition 2.5]{jossain} that the decoupling operators defined in Section \ref{section:coupling} are isometries.
In particular, given a Borel-measurable function $\vph:(0,T]\to [0,1]$ and $S \in \{0,T\}$,
it follows for any \linebreak
$X,Y \in L_0(\overline{\Om}_S,\Sigma_S^0,\overline{\P}_S)$ that $d(X,Y)=d(X^\vph,Y^\vph)$.
\smallskip

\begin{lemma}\label{lemma:small_observation_new}
Assume that $\M$ is locally $\sigma$-compact, and let $\A \subseteq \M$ dense and countable.
If $h:\M\to\R$ is continuous,
then $\sup_{x \in \M} h(x) = \sup_{x \in \A} h(x)$. Furthermore,
if $f_1,f_2 \in [f] \in L_0(\overline{\Om}_T,{\st {\mathcal B}([0,T])\otimes \overline{\cF}},\overline{\P};C(\M))$,
then
 \begin{equation*}
  \E \int_0^T \sup_{x\in \M}|f_2(r,x)|dr = \E \int_0^T \sup_{x\in \M}|f_1(r,x)|dr.
 \end{equation*}
\end{lemma}

{\st For the following recall that $(\overline{\Om}_S,\Sigma_S^\varphi,\overline{\P}_S)$ was introduced in 
equation \eqref{eqn:definition_product_spaces}.}

\begin{lemma}\label{lemma:consistent_supremum}
Assume that $\M$ is locally $\sigma$-compact.
Let $S \in \{0,T\}$, 
$f \in \mathcal{L}_0(\overline{\Om}_S,\Sigma_S^0,\overline{\P}_S;C(\M))$, and put for all $\eta \in \overline{\Om}_S$ and all $x \in \M$
 \begin{equation*}
  g(\eta,x):= f(\eta,x)
              1_{\{\tilde\eta \in \overline{\Om}_S \mitt \sup_{y \in \M} f(\tilde\eta,y) \in \R\} }.
 \end{equation*}
Then it holds that $g \in \mathcal{L}_0(\overline{\Om}_S,\Sigma_S^0,\overline{\P}_S;C(\M))$, and any representative 
$g^\vph \in \mathcal{L}_0(\overline{\Om}_S,\Sigma_S^\vph,\overline{\P}_S;C(\M))$ satisfies $\overline{\P}_S(\sup_{x \in \M} g^\vph(x) \in \R)=1$, and
 \begin{equation*}
  \sup_{x \in \M} g^\vph(x) 1_{\{\sup_{y \in \M} g^\vph(y) \in \R\}} \in \left(\sup_{x \in \M} g(x)\right)^\vph.
 \end{equation*}
Consequently, there exists a representative $h^\vph$ of $g^\vph \in L_0(\overline{\Om}_S,\Sigma_S^\vph,\overline{\P}_S;C(\M))$ such that
        \begin{equation*}
         \sup_{x \in \M} h^\vph(x) \in \left(\sup_{x \in \M} g(x)\right)^\vph.
        \end{equation*}
\end{lemma}
\begin{proof}
The claim $g \in \mathcal{L}_0(\overline{\Om}_S,\Sigma_S^0,\overline{\P}_S;C(\M))$ follows from Lemma \ref{lemma:small_observation_new}.
Since $\sup_{x \in \A} g(\eta,x) \in \R$ for all $\eta \in \overline{\Om}_S$,
we have that
$
  \sup_{x \in \A} g(x) \in \mathcal{L}_0(\overline{\Om}_S,\Sigma_S^0,\overline{\P}_S).
$
Since $\A$ is countable, we can fix finite sets $\A_1\subseteq \A_2 \subseteq \dots \subseteq \A$ such that $\bigcup_{n \in \N} \A_n = \A$.
Using Proposition \ref{proposition:coupling_properties}(ii) and the isometry-property, we have
\begin{equation*}
    d\left( \left(\sup_{x \in \A} g(x)\right)^\vph , \sup_{x \in \A_k} g^\vph(x) \right)
= d\left( \left(\sup_{x \in \A} g(x)\right)^\vph , \left(\sup_{x \in \A_k} g(x)\right)^\vph \right)
= d\left( \sup_{x \in \A} g(x) , \sup_{x \in \A_k} g(x) \right) \to 0,
\end{equation*}
as $k \to \infty$. From this, and from the fact that $(\sup_{x \in \A_k} g^\vph(\eta,x))_{k \in \N}$ is monotone for all $\eta \in \overline{\Om}_S$, we deduce that
$\sup_{x \in \A_k} g^\vph(x)$ converges $\overline{\P}_S$-a.s. to $\left(\sup_{x \in \A} g(x)\right)^\vph$.
On the other hand, the monotonicity also implies that
 \begin{equation*}
  \lim_{k \to \infty} \sup_{x \in \A_k} g^\vph(\eta,x) = \sup_{x \in \A} g^\vph(\eta,x)
 \end{equation*}
for all $\eta \in \overline{\Om}_S$. Hence, it follows from continuity that $\sup_{x \in \M} g^\vph(x)$ is $\overline{\P}_S$-a.s. finite and
 \begin{equation*}
  \sup_{x \in \M} g^\vph(x) 1_{\{\sup_{y \in \M} g^\vph(y) \in \R \}} \in \left(\sup_{x \in \M} g(x)\right)^\vph.
 \end{equation*}
\end{proof}

\begin{remark}\rm
Lemma \ref{lemma:consistent_supremum} implies that if the assumptions of Lemma \ref{lemma:(ii)} are satisfied by \linebreak
$f \in \mathcal{L}_0(\overline{\Om}_T,\Sigma_T^0,\overline{\P}_T;C(\M))$,
then they are also satisfied by $f^\vph$.
This holds, since applying Lemma \ref{lemma:consistent_supremum} restricted to a compact $K \subseteq \M$,
we notice that if $\E \sup_{x \in K} |f(x)| < \infty$, then $\E \sup_{x \in K} |f^\vph(x)| = \E \sup_{x \in K} |f(x)|$.
\end{remark}
\smallskip

\begin{lemma}[{\cite[\st Remark 2.14]{jossain}}]
\label{lemma:old}
Let $X \in \mathcal{L}_0(\overline{\Om}_T,\Sigma_T^0,\overline{\P}_T)$ such that $\int_0^T |X(t,\om)| dt < \infty$ for all $\om \in \overline{\Om}$.
Then for any representative
$X^\vph \in \mathcal{L}_0(\overline{\Om}_T,\Sigma_T^\vph,\overline{\P}_T)$ we have that 
$\overline{\P}\left( \int_0^T |X^\vph(t)| dt < \infty\right)=1$, and
 \begin{equation*}
    \int_0^T X^\vph(t) 1_{\{ \int_0^T |X^\vph(s)| ds < \infty\}} dt \in \left( \int_0^T X(t) dt \right)^\vph.
 \end{equation*}
\end{lemma}
\smallskip

\begin{lemma}\label{corollary:x}
Let $\M$ be locally $\sigma$-compact and let $f \in \mathcal{L}_0(\overline{\Om}_T,\Sigma_T^0,\overline{\P}_T;C(\M))$ such that
 \begin{equation*}
  \overline{\P} \left( \int_0^T \sup_{x \in \M} |f(t,\om,x)| dt < \infty \right) = 1.
 \end{equation*}
Then there exists a representative $h^\vph$ of $|f^\vph| \in L_0(\overline{\Om}_T,\Sigma_T^\vph,\overline{\P}_T;C(\M))$ such that
 \begin{equation*}
        \int_0^T \sup_{x \in \M} |h^\vph(t,x)| dt
   \in   \left( \int_0^T \sup_{x \in \M} |f(t,x)| dt \right)^\vph.
 \end{equation*}
\end{lemma}
\begin{proof}
First note that
 \begin{equation*}
  \overline{\P} \left( \int_0^T \sup_{x \in \M} |f(r,\om,x)| dr < \infty \right) = 1
 \end{equation*}
implies
 \begin{equation*}
  \overline{\P}_T \left( \sup_{x \in \M} |f(t,\om,x)| < \infty \right) = 1.
 \end{equation*}
We may redefine $f$ such that $\sup_{x \in \M} |f(t,\om,x)| < \infty$ for all $(t,\om) \in \overline{\Om}_T$,
and $\int_0^T \sup_{x \in \M} |f(r,\om,x)| dr < \infty$ for all $\om \in \overline{\Om}$.
It is a direct consequence of Proposition \ref{proposition:coupling_properties}(ii) that
$|f|^\vph = |f^\vph|$, so that we may look for a representative
of $|f|^\vph$ that satisfies the claim.
Applying Lemma \ref{lemma:consistent_supremum} to $|f|$ gives us a representative $h^\vph$ of $|f|^\vph \in L_0(\overline{\Om}_T,\Sigma_T^\vph,\overline{\P}_T;C(\M))$
such that $\sup_{x \in \M} h^\vph(x) \in (\sup_{x \in \M}|f(x)|)^\vph$.
Letting $X(t,\om):=\sup_{x \in \M} |f(t,\om,x)|$ for $(t,\om) \in \overline{\Om}_T$,
we then have that $\sup_{x \in \M} h^\vph(x)$ is a representative of $X^\vph$.
Hence, Lemma \ref{lemma:old} implies that
 \begin{equation*}
  \overline{\P} \left( \int_0^T \sup_{x \in \M}|h^\vph(t,x)| dt < \infty \right) = 1,
 \end{equation*}
and
 \begin{equation*}
    \int_0^T \sup_{x \in \M}|h^\vph(t,x)| 1_{\{ \int_0^T \sup_{x \in \M}|h^\vph(r,x)| dr < \infty\}} dt
    \in \left( \int_0^T \sup_{x \in \M} |f(t,x)| dt \right)^\vph.
 \end{equation*}
The representative of $|f|^\vph \in L_0(\overline{\Om}_T,\Sigma_T^\vph,\overline{\P}_T;C(\M))$ that satisfies the claim, is
$|h^\vph| 1_{\{ \int_0^T \sup_{x \in \M}|h^\vph(r,x)| dr < \infty\}}$.
\end{proof}

We are ready to prove the desired result. {\st Recall that
$\overline{\cG_s^t}$ and $\overline{\cH_s^t}$ were defined in \eqref{eqn:sigma_algebras_bar}
and $\xi^{(a,b]}$ in \eqref{eqn:xiab}.}

\begin{proposition}\label{proposition:ugliness}
Assume that $\M$ is locally $\sigma$-compact.
Let {\st $p\in [1,\infty)$}, 
$0 \le s < t \le T$, $0 \le u_1 < u_2 \le T$, and $f \in \mathcal{L}_0(\overline{\Om}_T,\Sigma_T^0,\overline{\P}_T;C(\M))$ such that
$\int_{\overline{\Om}_T} \sup_{x \in K} |f(x)| d\overline{\P}_T < \infty$ for every compact $K \subseteq \M$.
If
 \begin{equation*}
  \left\|\int_{u_1}^{u_2} \sup_{x \in \M} |f(r,x)-(\E^{\overline{\cH_s^t}} f)(r,x)| dr \right\|_p < \infty,
 \end{equation*}
then $\overline{\P}$-a.s.
\begin{equation*}
 \E^{\overline{\cG_s^t}} \left(\int_{u_1}^{u_2} \sup_{x\in \M} |f(r,x)-f^{(s,t]}(r,x)| dr \right)^p
\le 2^p \E^{\overline{\cG_s^t}} \left(\int_{u_1}^{u_2} \sup_{x\in \M} |f(r,x)-(\E^{\overline{\cH_s^t}} f)(r,x)| dr \right)^p.
\end{equation*}
Conversely, if 
 \begin{equation*}
  \left\|\int_{u_1}^{u_2} \sup_{x \in \M} |f(r,x)-f^{(s,t]}(r,x)| dr \right\|_p < \infty,
 \end{equation*}
then $\overline{\P}$-a.s.
\begin{equation*}
    \E^{\overline{\cG_s^t}} \left(\int_{u_1}^{u_2} \sup_{x\in \M} |f(r,x)-(\E^{\overline{\cH_s^t}} f)(r,x)| dr \right)^p
\le \E^{\overline{\cG_s^t}} \left(\int_{u_1}^{u_2} \sup_{x\in \M} |f(r,x)-f^{(s,t]}(r,x)| dr \right)^p.
\end{equation*}
\end{proposition}

\begin{remark}\rm
$ $
\begin{enumerate}
\item[(1)] To define $\E^{\overline{\cH}_s^t}f \in
           \mathcal{L}_0(\overline{\Om}_T,\overline{\cH_s^t},\overline{\P}_T;C(\M))$ we apply Lemma \ref{lemma:(ii)}, and this is why we need to assume that $\int_{\overline{\Om}_T} \sup_{x \in K} |f(x)| d\overline{\P}_T < \infty$ for every compact $K \subseteq \M$.
\item[(2)] The conclusion of Proposition \ref{proposition:ugliness}
               with $p=1$ implies that
                \begin{equation*}
                   \frac{1}{2} \E^{\overline{\cH_s^t}} \| f - f^{(s,t]} \|_{C(\M)}
                 \le  \E^{\overline{\cH_s^t}} \| f - 
                      \E^{\overline{\cH_s^t}} f \|_{C(\M)}
                 \le  \E^{\overline{\cH_s^t}} \| f - f^{(s,t]}
                       \|_{C(\M)}.
                \end{equation*}
                   Hence, Proposition \ref{proposition:ugliness} generalizes Lemma \ref{lemma:conditional_equivalence} from random variables $\xi:\Om \to \R$
                   to function-space valued stochastic processes $f:\Om_T \to C(\M)$.
\end{enumerate}
\end{remark}

\begin{proof}[{\st Proof of Proposition \ref{proposition:ugliness}}]
We will use $\|\cdot\|_p$ for $\|\cdot\|_{L_p(\overline{\Om})}$ and $\A$ for a fixed dense countable subset of $\M$.
Note that 
$\sup_{x \in \M} h(x)= \sup_{x \in \A} h(x)$ whenever $h$ is continuous, so that we may replace $\M$ by $\A$ in the proof below.
To simplify the notation in the proof, we assume that $u_1=0$ and $u_2=T$.
\smallskip

\textbf{Step 1:} We will first show that if $g \in \mathcal{L}_0(\overline{\Om}_T,\overline{\cH_s^t},\overline{\P}_T;C(\M))$
is such that
 \begin{equation*}
  \left\|\int_0^T \sup_{x \in \A} |g(r,x) - f(r,x)| dr \right\|_p < \infty,
 \end{equation*}
then
\begin{equation*}
      \left\|\int_0^T \sup_{x \in \A} |f(r,x) - f^{(s,t]}(r,x)|dr \right\|_p
\le  2\left\|\int_0^T \sup_{x \in \A} |f(r,x) - g(r,x)        | dr \right\|_p.
\end{equation*}
Fixing $g$ as described above,
Lemma \ref{lemma:adapted_extended}(v) implies that $g \in g^{(s,t]}$,
so Lemma \ref{corollary:x} applied to $g-f$
in particular implies
 \begin{equation*}
  \E \left( \int_0^T \sup_{x \in \A} |g(r,x) - f^{(s,t]}(r,x)| dr \right)^p
= \E \left( \int_0^T \sup_{x \in \A} |g(r,x) - f(r,x)| dr         \right)^p.
 \end{equation*}
From this we deduce
 \equa
&& \left\|\int_0^T \sup_{x \in \A} |f(r,x) - f^{(s,t]}(r,x)|dr \right\|_p \\
&\le& \left\|\int_0^T \sup_{x \in \A} |f(r,x) - g(r,x)        | dr \right\|_p
    + \left\|\int_0^T \sup_{x \in \A} |g(r,x) - f^{(s,t]}(r,x)| dr \right\|_p \\
&=&  2\left\|\int_0^T \sup_{x \in \A} |f(r,x) - g(r,x)        | dr \right\|_p.
 \tion

\textbf{Step 2:} We assume that $\|\int_0^T \sup_{x \in \A} |f(r,x)-f^{(s,t]}(r,x)|dr \|_p< \infty$, and will show that
\begin{equation*}
  \left\|\int_0^T \sup_{x \in \A} |f(r,x)-(\E^{\overline{\cH_s^t}}f)(r,x)|dr \right\|_p
\le \left\|\int_0^T \sup_{x \in \A} |f(r,x)-f^{(s,t]}(r,x)| dr \right\|_p.
\end{equation*}
We use $W^0,W^1$ to denote the canonical extensions of $W,W'$, respectively, and for
$0 \le a < b \le T$ we work with the $\sigma$-algebras

 \equa
  \cH_{a,b}^{W^0} &:=& \mathcal{B}([0,T]) \otimes \sigma( W_r^0-W_a^0, r \in [a,b]), \\
  \cH_{a,b}^{W^1} &:=& \mathcal{B}([0,T]) \otimes \sigma( W_r^1-W_a^1, r \in [a,b]), \\
  \mathcal{H}     &:=& \{\emptyset,[0,T]\} \otimes \sigma(W_r^0-W_s^0,r \in [s,t]).
 \tion

Note that these are $\sigma$-algebras in $\overline{\Om}_T$, and we have the inclusions
 \equa
   \cH_{0,T}^{W^0}						                    &\subseteq& \Sigma_T^0, \\
   \cH_{0,s}^{W^0}\vee\cH_{s,t}^{W^1}\vee\cH_{t,T}^{W^0}	&\subseteq& \Sigma_T^{(s,t]}, \\
   \cH_{0,s}^{W^0}\vee\cH_{t,T}^{W^0}                       &\subseteq& \overline{\cH_s^t}.
 \tion
Moreover, the inclusions are "up to nullsets", which in this context means that we have for example
 \begin{equation*}
  \Sigma_T^0 = \cH_{0,T}^{W^0} \vee
        \left( \mathcal{B}([0,T]) \otimes \overline{\cN} \right),
 \end{equation*}
where $\overline{\cN}$ are the $\overline{\P}$-nullsets. From this it follows that $f$ and $\E^{\cH_{0,T}^{W^0}}f$ given by Lemma \ref{lemma:(ii)} are indistinguishable. 
To keep the notations as light as possible, we simply say that using Lemma \ref{lemma:small_observation_new} and Lemma \ref{lemma:(ii)}, we may assume that
\begin{enumerate}[(1)]
 \item $f$ is $\cH_{0,T}^{W^0}$-measurable,
 \item $f^{(s,t]}$ is $\cH_{0,s}^{W^0}\vee\cH_{s,t}^{W^1}\vee\cH_{t,T}^{W^0}$-measurable,
 \item $\E^{\overline{\cH_s^t}}f^{(s,t]}$ is $\cH_{0,s}^{W^0}\vee\cH_{t,T}^{W^0}$-measurable.
\end{enumerate}
Then the facts that for all $x \in \A$
 \begin{equation*} \cH_{0,s}^{W^0}\vee\cH_{t,T}^{W^0}\vee\sigma(f^{(s,t]}(x)) \mbox{ is independent of } \mathcal{H}, \end{equation*}
 \begin{equation*} f(x) \mbox{ is } \cH_{0,s}^{W^0}\vee\cH_{t,T}^{W^0}\vee\mathcal{H} \mbox{-measurable}, \end{equation*}
are immediate. Hence, it follows from \cite[9.7(k)]{Williams} that
 \begin{equation*}
  \E^{\overline{\cH_s^t} \vee \cH} f^{(s,t]}(x) =
  \E^{\overline{\cH_s^t}} f^{(s,t]}(x)
 \end{equation*}
for all $x \in \A$.
Since $f(x) \in L_1(\overline{\Om}_T,\Sigma_T^0,\overline{\P}_T)$ for all $x \in \A$, it follows from Proposition \ref{lemma:adapted_extended}(i) that\linebreak
$\E^{\overline{\cH_s^t}} f(x) = \E^{\overline{\cH_s^t}} f^{(s,t]}(x)$ for all $x \in \A$.
Thus we have
 \equa
\left\|\int_0^T \sup_{x \in \A} |f(r,x)-(\E^{\overline{\cH_s^t}}f)(r,x)|dr \right\|_p
&=& \left\|\int_0^T \sup_{x \in \A}
               |f(r,x)-(\E^{\overline{\cH_s^t}}f^{(s,t]}(r,x))|dr \right\|_p \\
&=& \left\|\int_0^T \sup_{x \in \A}
               |f(r,x)-(\E^{\overline{\cH_s^t}\vee\mathcal{H}}f^{(s,t]}(r,x))|dr \right\|_p \\
&=& \left\|\int_0^T \sup_{x \in \A}
               |\E^{\overline{\cH_s^t}\vee\mathcal{H}}(f(r,x)-f^{(s,t]}(r,x))|dr \right\|_p \\
&\le& \left\|\int_0^T \sup_{x \in \A}
               \E^{\overline{\cH_s^t}\vee\mathcal{H}}|f(r,x)-f^{(s,t]}(r,x)|dr \right\|_p \\
&\le& \left\|\int_0^T \E^{\overline{\cH_s^t}\vee\mathcal{H}}
              \left( \sup_{x \in \A} |f(r,x)-f^{(s,t]}(r,x)| \right) dr \right\|_p \\
&=& \left\|\E^{\cF^0} \left( \int_0^T
               \sup_{x \in \A} |f(r,x)-f^{(s,t]}(r,x)| dr \right) \right\|_p \\
&\le& \left\|\int_0^T \sup_{x \in \A} |f(r,x)-f^{(s,t]}(r,x)| dr \right\|_p.
 \tion

\textbf{Step 3:} The conditional claim follows from the result with the full expectation as in Lemma \ref{lemma:conditional_equivalence}:
assume that $f \in \mathcal{L}_0(\overline{\Om}_T,\Sigma_T^0,\overline{\P}_T;C(\M))$ is such that
$\int_{\overline{\Om}_T} \sup_{x \in K} |f(x)| d\overline{\P}_T < \infty$
for every compact $K \subseteq \M$. Let $B \in \overline{\cG_s^t}$ with $\overline{\P}(B)>0$, and define
 \begin{equation*}
  \tilde f \in \mathcal{L}_0(\overline{\Om}_T,\Sigma_T^0,\overline{\P}_T;C(\M)) \, \text{ by } \,
  \tilde f(r,\om,x) := f(r,\om,x) 1_B(\om).
 \end{equation*}
Fixing any representative of $f^{(s,t]}$, we have that
\begin{enumerate}
\item[(1)] $\int_{\overline{\Om}_T} \sup_{x \in K} |\tilde f(x)| d\overline{\P}_T < \infty$ for every compact $K \subseteq \M$,
\item[(2)] $1_B (\E^{\overline{\cH_s^t}} f)$
       is a representative of $(\E^{\overline{\cH_s^t}} \tilde f)$,
 \item[(3)] $1_B f^{(s,t]}$ 
       is a representative of $\tilde f^{(s,t]}$,
\end{enumerate}
so the claim follows by applying steps 1 and 2 with $\tilde f$.
\end{proof}

%%%%%%%%%%%%%%%%%%%%%%%%%%%%%%%%%%%%%%%%%%%%%%%%%%%%%%%%%%%%%%%%%%%%%%%%%%%%%%%%%%%%%%%%%%%%%%%%%%%%%%%%%%%%%%%%%%%%%%%%%%

\section{\st Appendic C: A John-Nirenberg type theorem}
\label{sec:JN}

{\st We recall the} result \cite{Geiss:wBMO} (Theorem \ref{theorem:Stefan1}). 
{\st Whereas} in \cite{Geiss:wBMO} c\`adl\`ag processes are considered, {\st we only need the case of continuous processes}.
Fix $R>0$, let $(\Om,\cG_R,\P,(\cG_r)_{r \in [0,R]})$ {\st be} a stochastic basis 
{\st such that $(\Om,\cG_R,\P)$ is complete, $(\cG_r)_{r \in [0,R]}$ is right-continuous, and
$\cG_0$ contains all nullsets,} and let 
$A = (A_r)_{r \in [0,R]}$ {\st be} a \emph{continuous},
adapted stochastic process with $A_0{\st \equiv}0$.
Moreover, we assume that $(\Psi_r)_{r \in [0,R]}$ is a c\`adl\`ag $(\cG_r)_{r \in [0,R]}$-adapted stochastic process
such that $\Psi_r(\om)>0$ for all $(r,\om)\in \Om_R$. Put
 \begin{equation*}
  \mathcal{S}^\cG_{0,R} := \left\{ \tau:\Om \to [0,R] \mitt \tau \text{ is a } (\cG_r)_{r \in [0,R]}-\text{stopping time} \right\}
 \end{equation*}
and define
 \begin{equation*} W_\Psi(B,\nu;\tau) := \P\left(B \cap \left\{ \sup_{u \in [\tau,R]}\Psi_{u} > \nu \right\}\right) \end{equation*}
for $\nu>0$, $\tau \in \mathcal{S}^\cG_{0,R}$, and $B \in \gtn_\tau$.
Recall that for $B \in \cG_R$ of positive measure
 \begin{equation*} \P_B \left( \cdot \right) := \frac{\P(B \cap \cdot)}{\P(B)}. \end{equation*}

\begin{theorem}[{\cite[Theorem 1]{Geiss:wBMO}}]\label{theorem:Stefan1}
Assume that there is an $\thteta \in (0,\half)$ such that
 \begin{equation}\label{equation:(1)}
  \P_B(|A_R-A_\tau|>\nu) \le \thteta + \frac{W_\Psi(B,\nu;\tau)}{\P(B)}
 \end{equation}
for all $\nu>0$, $\tau \in \mathcal{S}^\cG_{0,R}$, and $B \in \cG_\tau$ of positive measure.
 Then there are constants $a,c>0$, depending on $\thteta$ only, such that
\begin{equation*}
  \P_B \left( \sup_{u \in [\tau,R]}|A_u-A_\tau| > \lambda + a\mu\nu \right)
\le e^{1-\mu} \P_B\left( \sup_{u \in [\tau,R]}|A_u-A_\tau| > \lambda \right)
    +  c \frac{W_\Psi(B,\nu;\tau)}{\P(B)}
\end{equation*}
for all $\lambda,\mu,\nu>0$, $\tau \in \mathcal{S}^\cG_{0,R}$, and $B \in \cG_\tau$ of positive measure.
\end{theorem}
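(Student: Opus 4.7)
The plan is to derive the conclusion from a weighted good-lambda inequality, which is then iterated; this is the classical John--Nirenberg/Garsia route from a Chebyshev-type BMO condition to exponential tails, adapted so that the weight process $\Psi$ enters additively at each step. Concretely, I would aim to establish
\[ \P_B\bigl(\sup_{u\in[\sigma,R]}|A_u - A_\sigma| > \lambda + 3\nu\bigr) \le 2\theta \, \P_B\bigl(\sup_{u\in[\sigma,R]}|A_u - A_\sigma| > \lambda\bigr) + \frac{2 \, W_\Psi(B,\nu;\sigma)}{\P(B)} \]
for every $\lambda,\nu > 0$, $\sigma \in \mathcal{S}^\cG_{0,R}$, and $B \in \cG_\sigma$ of positive measure, then iterate this bound.

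The key step is to introduce the two-stage stopping times
\[ \rho := \inf\{u \in [\sigma, R] : |A_u - A_\sigma| > \lambda\} \wedge R, \qquad \eta := \inf\{u \in [\rho, R] : |A_u - A_\rho| > 3\nu\} \wedge R. \]
Path continuity of $A$ gives $\{\rho < R\} = \{\sup_{u\in[\sigma,R]}|A_u - A_\sigma| > \lambda\}$, and on $\{\eta < R\}$ the identity $|A_\eta - A_\rho| = 3\nu$. The inequality $3\nu \le |A_R - A_\rho| + |A_R - A_\eta|$ then forces (strictly) either $|A_R - A_\rho| > \nu$ or $|A_R - A_\eta| > \nu$, so
\[ \{\sup_{u}|A_u - A_\sigma| > \lambda + 3\nu\} \cap B \subseteq \bigl(\{|A_R - A_\rho| > \nu\} \cap B_\rho\bigr) \cup \bigl(\{|A_R - A_\eta| > \nu\} \cap B_\eta\bigr), \]
with $B_\rho := B \cap \{\rho < R\} \in \cG_\rho$ and $B_\eta := B \cap \{\eta < R\} \in \cG_\eta$. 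Applying the hypothesis \eqref{equation:(1)} to the pairs $(B_\rho, \rho)$ and $(B_\eta, \eta)$, combined with the obvious monotonicity $W_\Psi(B', \nu; \tau) \le W_\Psi(B, \nu; \sigma)$ whenever $B' \subseteq B$ and $\tau \ge \sigma$ and the inclusion $B_\eta \subseteq B_\rho$, yields the good-lambda bound after summing.

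Given this, set $\theta_0 := 2\theta \in (0,1)$ and choose the smallest $k_0 \in \N$ with $\theta_0^{k_0} \le e^{-1}$; clearly $k_0$ depends only on $\theta$. Iterating the good-lambda inequality $k_0$ times in succession (replacing $\lambda$ by $\lambda + 3j\nu$ at the $j$-th pass) yields a ``super'' inequality with effective rate $\theta_0^{k_0} \le e^{-1}$ and cumulative weight term bounded by $\tfrac{2}{1-\theta_0} \cdot \tfrac{W_\Psi(B,\nu;\sigma)}{\P(B)}$. Setting $a := 3k_0$ and iterating the super-inequality $\lfloor \mu \rfloor$ further times then gives
\[ \P_B\bigl(\sup_{u}|A_u - A_\sigma| > \lambda + a\mu\nu\bigr) \le e^{-\lfloor \mu \rfloor} \, \P_B\bigl(\sup_{u}|A_u - A_\sigma| > \lambda\bigr) + \frac{2}{(1-\theta_0)(1-e^{-1})} \cdot \frac{W_\Psi(B,\nu;\sigma)}{\P(B)}, \]
and the elementary bound $e^{-\lfloor \mu \rfloor} \le e^{1-\mu}$ finishes the proof with $\alpha := 2/\bigl((1-\theta_0)(1-e^{-1})\bigr)$.

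The main obstacle is the bookkeeping inside the good-lambda step: one must verify carefully that the restricted sets $B_\rho$ and $B_\eta$ lie in $\cG_\rho$ and $\cG_\eta$ respectively (this uses continuity of $A$ together with $B \in \cG_\sigma \subseteq \cG_\rho \subseteq \cG_\eta$), that the strict triangle decomposition on $\{\eta < R\}$ aligns with the $>\nu$ form of hypothesis \eqref{equation:(1)}, and that both weight contributions are uniformly dominated by $W_\Psi(B, \nu; \sigma)$. Once these measurability and monotonicity points are settled, the iteration and the explicit choices $a = 3k_0$ and $\alpha = 2/\bigl((1-2\theta)(1-e^{-1})\bigr)$, both depending only on $\theta$, are routine.
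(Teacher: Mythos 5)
Your proof is correct and takes essentially the same approach as the paper's: both derive a good-$\lambda$ inequality by applying hypothesis \eqref{equation:(1)} at two successive first-passage times (the paper works with $\sigma_\lambda$ and $\sigma_{\lambda+\nu}$ and splits with thresholds $\nu/2$, $\nu/3$, while you use an increment of $3\nu$ so the strict $>\nu$ form applies directly--equivalent up to rescaling), and both control the weight term via the monotonicity of $W_\Psi$ in $B$ and $\sigma$. The only cosmetic difference is in the iteration: the paper does a single $N$-step pass, bounding $\eta^N + V\sum_{k=1}^N\eta^{k-1} \le \eta^N + V/(1-\eta)$ and then choosing $b = 1\vee(-1/\log\eta)$ to convert $\eta^N$ into $e^{1-\mu/b}$, whereas you iterate in two levels (a "super" block of $k_0$ steps followed by $\lfloor\mu\rfloor$ further blocks), which produces a slightly different constant $\alpha$ but is equally valid.
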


%%%%%%%%%%%%%%%%%%%%%%%%%%%%%%%%%%%%%%%%%%%%%%%%%%%%%%%%%%%%%%%%%%%%%%%%%%%%%%%%%%%%%%%%%%%%%%%%%%%%%%%%%%%%%%%%%%%%%%%%%%

\section*{\st References}
\bibliographystyle{elsarticle-num}

\end{document}